\documentclass[11pt]{article}

\usepackage[margin=1in]{geometry}
\usepackage{amsmath,amssymb,amsfonts,amsthm,mathtools,mathrsfs}
\usepackage{bm}
\usepackage{enumitem}
\usepackage{xcolor}
\usepackage{graphicx}
\usepackage{tikz}
\usetikzlibrary{arrows.meta}
\usepackage{natbib}
\usepackage[colorlinks=true,linkcolor=blue,citecolor=blue,urlcolor=blue]{hyperref}
\makeatletter
\renewcommand*{\hyper@natlinkbreak}[2]{#1}
\makeatother

\newtheorem{theorem}{Theorem}[section]
\newtheorem{proposition}[theorem]{Proposition}
\newtheorem{lemma}[theorem]{Lemma}
\newtheorem{corollary}[theorem]{Corollary}
\newtheorem{remark}[theorem]{Remark}

\newcommand{\R}{\mathbb{R}}
\newcommand{\E}{\mathbb{E}}
\newcommand{\1}{\mathbf{1}}
\newcommand{\dd}{\,\mathrm{d}}
\newcommand{\cD}{\mathbb{D}}

\DeclareMathOperator{\Pois}{Pois}
\DeclareMathOperator{\IG}{IG}

\title{A Hawkes Microfoundation for Multitype Inverse Gaussian Subordinators}
\author{
Yingli Wang\thanks{School of Mathematical Sciences, Fudan University, Shanghai, China; \texttt{yingliwang@fudan.edu.cn}}
\and
Wei Xu\thanks{School of Mathematics and Statistics, Beijing Institute of Technology, Beijing, China; \texttt{xuwei.math@gmail.com}}
\and
Lingjiong Zhu\thanks{Department of Mathematics, Florida State University, Florida, United States of America; \texttt{zhu@math.fsu.edu}}}
\date{\today}

\begin{document}

\maketitle

\begin{abstract}
We provide an event-level Hawkes microfoundation for a multitype inverse-Gaussian stochastic clock.  We show that the event counts and integrated intensities of nearly critical multivariate linear Hawkes processes converge jointly to a multivariate pure-jump subordinator when reproduction delays have finite mean and immigration is balanced so that rare large families remain visible.  The dependence among the limiting coordinates is inherited from microscopic cross-excitation, and the limit admits a Brownian additive-field first-passage representation.  We call this process the multitype inverse-Gaussian subordinator.  Its diagonal case consists of independent classical inverse-Gaussian subordinators, with the univariate model as a further special case.  The square-root specialization recovers the inverse-Gaussian clock obtained at the boundary of hyper-rough square-root models by Abi Jaber--Attal--Rosenbaum (\textit{Ann. Appl. Probab.} \textbf{36}(4): 3635--3660, 2026).  Our cluster proof exposes the underlying mechanism: finite-variance near-critical branching creates rare but macroscopic families, while their internal timing disappears on the observation scale, so each family becomes a jump.  We also clarify the relation with the multivariate Hawkes scaling theory of Xu (arXiv:2412.14459): its diagonal-atom condition yields the diagonal specialization, whereas the genuinely coupled limit requires a separate uniqueness argument.  In addition, under a common tilted-stability condition, we replace the high-intensity assumption in that theory by a weaker accumulated-activity condition and provide the Riccati identification needed when the potential has an atom.  We strengthen convergence of the count and compensator and extend the scalar conclusion to finite-variance age-dependent branching clusters.
\end{abstract}



\section{Introduction}

Let $N$ be a simple point process on $\R_+$ and let
\[
    \mathcal F_t 
    :=\sigma\!\left(N(C):C\in\mathcal B(\R),\ C\subset(0,t]\right)
\]
be an increasing family of $\sigma$-algebras.  Any nonnegative $(\mathcal F_t)$-progressively measurable process $\lambda_t$ satisfying
\[
    \E\!\left[N(a,b]\mid\mathcal F_a\right]
    =\E\!\left[
        \int_a^b\lambda_s\dd s
        \,\middle|\,\mathcal F_a 
      \right]
    \qquad\text{almost surely},
\]
for all intervals $(a,b]$ is called an $(\mathcal F_t)$-intensity of $N$.  We use the notation $N_t:=N(0,t]$ to denote the number of points in the interval $(0,t]$.

We consider $N$ to be a univariate linear Hawkes process admitting the $(\mathcal F_t)$-intensity
\begin{equation}\label{eq:hawkes-intro}
    \lambda_t
    =\mu+\int_{(0,t)}h(t-s)\dd N_s
    =\mu+\sum_{\tau_i<t}h(t-\tau_i), \quad t\geq 0,
\end{equation} 
where $\mu>0$ is the \textit{baseline intensity}, $\{\tau_i\}_{i\geq 1}$ are the event times, and $h:\R_+\to\R_+$ is a measurable, nonnegative \textit{excitation kernel} satisfying $h\in L^1(\R_+)$.  The value $h(u)$ describes the increment in the current intensity caused by an event that occurred $u$ time units earlier.  Thus every arrival raises the future intensity and generates the self-excitation and temporal clustering characteristic of the model.  Introduced by \cite{Hawkes1971}, the linear Hawkes process naturally generalizes the Poisson process and remains tractable for statistical analysis.  In finance, it has been used to model clustered trades and quote changes, high-frequency price movements, limit-order-book events, and financial contagion; see, for example, \cite{Bowsher2007,EmbrechtsLinigerLin2011} and the survey of \cite{BacryMastromatteoMuzy2015}.
The total kernel mass
\[
    a:=\|h\|_{L^1}=\int_0^\infty h(s)\dd s
\]
governs the stability and branching behavior of a linear Hawkes process.  
 The process is called \textit{subcritical}, \textit{critical} or \textit{supercritical} when $a<1$, $=1$ or $>1$ respectively.  
 By the immigration--branching representation of \cite[pp.~495--496, especially equations~(5)--(6)]{HawkesOakes1974}, $a$ is also the mean number of direct offspring produced by an event and is therefore called the branching ratio.  In the subcritical case, every immigrant family is finite almost surely, whereas at criticality the mean total family size diverges.  A triangular array of subcritical Hawkes processes with branching ratios $a_T\uparrow1$ is called nearly unstable, or near-critical.  As the branching ratio approaches one, very large families remain rare but become increasingly consequential.  Critical Hawkes systems, including constructions without ancestors, have consequently been studied through both point-process and branching methods \cite{BremaudMassoulie2001,Kirchner2023}.

For such arrays,  scaling limits of nearly unstable Hawke models scaling connect high-frequency event data to macroscopic stochastic volatilities.  
 In the finite-mean window $T(1-a_T)\to\lambda\in(0,\infty)$, under some light-tailed assumptions on the kernel, the normalized intensity converges to a \textit{Cox--Ingersoll--Ross model} (CIR-model) \cite{JaissonRosenbaum2015}. 
 On another hand, heavy-tailed reproduction times instead retain long memory and lead to fractional or rough limits \cite{JaissonRosenbaum2016}.  Related Hawkes microfoundations for leverage and rough volatility are developed in \cite{ElEuchFukasawaRosenbaum2018,JusselinRosenbaum2020}, with a multidimensional Hawkes-to-rough-volatility extension in \cite{RosenbaumTomas2021}.  Recent time-modulated heavy-tailed Hawkes limits are studied by \cite{GnabeyeuPagesRosenbaum2026}; see also \cite{HorstXuZhang2023}.  Complementary diffusion approximations for nearly critical multitype marked self-excited systems and Crump--Mode--Jagers branching processes are obtained in \cite{Xu2024Diffusion}; those limits retain population-density dynamics and yield multitype continuous-state branching diffusions, whereas our calendar-collapse regime retains total family masses as jumps.  
 Self-exciting jump cascades in stochastic-volatility microstructure are studied by \cite{HorstXu2022}, while the three growing-dimensional mean-field regimes are identified in \cite{SzymanskiXu2026}.  The long-run behavior of fixed subcritical and critical Hawkes processes is treated systematically in \cite{HorstXu2026}, while \cite{Xu2024Scaling} gives a broad multivariate triangular-array theory in terms of stochastic Volterra equations with measure kernels.  
 These results form part of the broader rough-volatility program initiated empirically in \cite{GatheralJaissonRosenbaum2018} and supported analytically by the affine Volterra framework \cite{AbiJaberLarssonPulido2019,AbiJaber2021}.

 A recent result of \cite{AbiJaberAttalRosenbaum2025} shows that when the Hurst parameter of a hyper-rough square-root process decreases to $-1/2$, a sequence of continuous increasing processes converges to an \textit{inverse Gaussian} (IG) pure-jump process.  
 The proof passes through a singular Volterra limit: a normalized fractional kernel collapses to a Dirac mass, a Volterra equation becomes a Brownian first-passage relation, and the resulting clock is IG \cite{AbiJaberAttalRosenbaum2025}.  
 This complements the fast-reversion route from rough volatility to normal inverse Gaussian models in \cite{AbiJaberDeCarvalho2024}.  The newer theory of Volterra clocks extends the same first-passage mechanism beyond affine boundaries \cite{AbiJaberAttalSojmark2026}, while inverse Gaussian variables also appear naturally in affine-Volterra simulation schemes for Hawkes processes \cite{AbiJaberAttalSotnikov2025}.  
 Under the notation in \cite{AbiJaberAttalRosenbaum2025}, their IG process parameters (equivalently, the unit-time marginal parameters) are reproduced, for $\nu\ne0$, by
\begin{equation}\label{eq:parameter-map}
    \eta=\frac{g_0}{|\nu|},
    \qquad
    \gamma=\frac{1+\lambda}{|\nu|}.
\end{equation}
The branching construction below therefore reaches the same Brownian first-passage object through a direct event-level limit.

 Related scalar limits are studied in \cite{Zitkovic2026Feller}, in which Feller random measures are represented as occupation measures of Dawson--Watanabe superprocesses with killed L\'evy-subordinator spatial motion.  
 For memory measure $\rho=\delta_0$ and immigration measure $\vartheta\dd t$, Example~5.10 of that work gives the scalar IG exponent $\vartheta(\sqrt{1+2u}-1)$.  This law is also contained in Theorems~2.9 and~2.12 in \cite{Xu2024Scaling} with $K=1$, $\Phi\equiv1$, $\Pi=\delta_0$, and $\Upsilon(t)=\vartheta t$.  Additionally, \cite{Zitkovic2026Feller} also proves the parameter stability for scalar Feller random measures and recovers the hyper-rough-boundary IG limit in Skorokhod space endowed with $M_1$-topology.

The present paper establishes a joint scaling limit for the counts and integrated intensities of mutually exciting Hawkes processes throughout the finite-mean calendar-collapse region.  The main theorem retains non-diagonal dependence among event types, and the cluster estimates identify the exact collapse condition for a fixed scalar reproduction-time density.  Our main theorem allows the kernel-mass matrix to approach the identity as
$\boldsymbol A_T=\boldsymbol I-r_T\boldsymbol B+o(r_T)$, where
$\boldsymbol B$ is a nonsingular $M$-matrix.  Its off-diagonal entries retain first-order cross-excitation in the limit.  For $\boldsymbol x=(x_1,\ldots,x_d)^{\top}$, write $\boldsymbol x^{\circ2}:=\left(x_1^2,\ldots,x_d^2\right)^{\top}$ for its componentwise square.  The limiting Laplace exponent is determined by the coupled quadratic system
\[
    \boldsymbol B^{\top}\boldsymbol z
       +\frac12\boldsymbol z^{\circ2}=\boldsymbol u,
\]
and admits a Brownian additive-field first-passage interpretation.  Diagonal $\boldsymbol B$ gives independent IG coordinates, and the classical scalar clock is recovered by $d=1$.

The scalar mechanism is especially transparent.  Write
\[
    r_T:=1-a_T.
\]
A finite-variance near-critical family has a total-progeny cutoff of order $r_T^{-2}$, reaches that scale with probability of order $r_T$, and has genealogical depth of order $r_T^{-1}$.  We therefore normalize counts by $c_T=r_T^{-2}$.  There are $T\mu_T$ immigrant families per macroscopic time unit, so the rare-family balance is
\begin{equation}\label{eq:scaling-intro}
    T\mu_Tr_T\longrightarrow\vartheta\in(0,\infty),\qquad\text{as $T\rightarrow\infty$}.
\end{equation}
Finite-mean reproduction times make the aggregate temporal profile disappear precisely under the additional condition $Tr_T\to\infty$.  An entire near-critical order-flow avalanche then survives only through its mass and becomes one jump.

 This produces a phase diagram rather than an isolated window.  
 If $r_T\asymp T^{-\beta}$, every $\beta\in(0,1)$ belongs to the IG calendar-collapse region after normalization by $r_T^{-2}$.  
 At the boundary $Tr_T\to\lambda\in(0,\infty)$, family depth remains macroscopic and one recovers the persistent-family, integrated-CIR regime of \cite{JaissonRosenbaum2015} under the corresponding immigration balance and the hypotheses of that theorem.  The case $Tr_T\to0$ is more persistent still and is not identified with that CIR boundary here.  The square-root choice is the regime in which, as $T\to\infty$,
\begin{equation}\label{eq:square-root-scaling}
    \sqrt T r_T\longrightarrow\gamma\in(0,\infty),
    \qquad
    \sqrt T\mu_T\longrightarrow\eta\in(0,\infty)
\end{equation}
is distinguished only if one insists that family mass be normalized by $T$: then $r_T^{-2}\asymp T$, and the limit has exactly the parameterization in \eqref{eq:parameter-map}.

The finite-mean assumption on reproduction times is essential to this comparison.  For heavy-tailed delays with index $\alpha\in(0,1)$, the duration of $n$ generations can be on the scale $n^{1/\alpha}$.  In particular, under the square-root specialization and $\alpha=1/2$, a family of depth $\sqrt{T}$ may still occupy order-$T$ calendar time, so its internal timing need not collapse and memory may survive in a Volterra limit \cite{JaissonRosenbaum2016}; see also \cite{HorstXuZhang2023}.  
The branching-ratio exponent alone therefore does not determine the macroscopic path type; the temporal tail of the Hawkes kernel matters as well.

 The square-root form of the IG exponent follows directly from the generating function of the total-progeny of a Poisson Galton--Watson family. 
 More precisely, let $S_T$ denote its total progeny, including the initial immigrant, when the offspring counts of different individuals are independent and identically distributed according to a Poisson law $\Pois(a_T)$, with rate $a_T=1-r_T<1$ and $a_T\to1$ as $T\to\infty$.  Set
\[
    q_T(u):=\E\left[e^{-ur_T^2S_T}\right].
\]
Conditioning on the initial individual's offspring count $\xi_T\sim\Pois(a_T)$ gives the total-progeny branching equation \cite{Minami2005}, here specialized to Poisson offspring:
\[
    q_T(u)=e^{-ur_T^2}\E\left[\left(q_T(u)\right)^{\xi_T}\right]
          =e^{-ur_T^2}\exp\left\{a_T\left(q_T(u)-1\right)\right\}.
\]
Writing $1-q_T(u)=r_Tz_T(u)$ yields
\[
    u=z_T(u)
       +\frac{-\log(1-r_Tz_T(u))-r_Tz_T(u)}{r_T^2},
\]
and hence,
\[
    z_T(u)\longrightarrow \sqrt{1+2u}-1.
\]
Multiplication by the asymptotic immigrant-family factor $T\mu_Tr_T\to\vartheta$ produces the IG exponent.

This calculation alone concerns a process in which every family is artificially collapsed at its immigrant time.  The main technical point is to return to the original Hawkes process.  We prove that the difference at any fixed time is bounded in $L^1$ by $\mu_Tm_\phi a_T$, using only the first moment of the reproduction-time density.  For the fixed scalar reproduction-time density, Lemma~\ref{lem:collapse-necessity} shows that the actual fixed-time error vanishes exactly when $Tr_T\to\infty$ under the rare-family balance.  Monotonicity, endpoint estimates, and finite-dimensional convergence then give process convergence in $M_1$.  The topology is essential: every normalized microscopic jump has size $r_T^2\to0$, whereas the limit has jumps.  The $J_1$ topology cannot merge a fast staircase of microscopic jumps into one macroscopic jump, while $M_1$ can.

 This topological issue also reveals that high microscopic intensity is not the intrinsic scaling requirement.  In the earlier work \cite{Xu2024Scaling}, one of us proved convergence of asymptotically critical multivariate Hawkes systems in Jakubowski's $S$-topology under the standing assumption $\theta_n\to\infty$.  
 Writing $b_n:=\sqrt{n\theta_n}$ shows that the rescaled count, compensator, martingale, resolvent, and criticality condition depend on $\theta_n$ only through $b_n$.  
 Under a common tilted-stability condition and the diagonal-atom assumption, we first prove these scaling conclusions under the weaker condition $n\theta_n\to\infty$.  
 Then, the positivity of the limiting potential and local $L^p$ estimates are established. 
 When the potential has an atom at zero, we also provide a fixed logarithm branch for the Riccati--Volterra existence and identification argument.  
 Finally, we prove a general monotone-path principle that upgrades the count and compensator to the product $M_1$ topology whenever their common limit is stochastically continuous.  
 This topology is the relevant one when the measure kernel possesses an atom and the limit may exhibit a jump; the $J_1$-topology does not handle this situation.
 We refere to \cite{Kern2023} for an expository comparison of the Skorokhod topologies. 

For the present normalization, matching
$\boldsymbol N^T_{Tt}/(T\theta_T)$ with
$r_T^2\boldsymbol N^T_{Tt}$ gives
$\theta_T=(Tr_T^2)^{-1}$ and hence
$T\theta_T=r_T^{-2}\to\infty$, regardless of the behavior of
$Tr_T^2$.  In the terminology of \cite{Xu2024Scaling}, our matrix scaling has
$\boldsymbol K=\boldsymbol I$, $\boldsymbol\Phi=\boldsymbol B$, and formal potential atom $\boldsymbol\Pi=\boldsymbol B^{-1}\delta_0$.  The diagonal-atom assumption in \cite{Xu2024Scaling} is satisfied in the diagonal specialization, including the scalar case, while its original high-intensity condition further requires $Tr_T^2\to0$.  Our accumulated-activity extension covers the diagonal specialization throughout the calendar-collapse region.  A non-diagonal $\boldsymbol B$ gives a non-diagonal potential atom; the multitype cluster proof identifies this coupled limit and the $M$-matrix argument gives uniqueness of its quadratic exponent.

Our main contributions are as follows.
\begin{enumerate}[label=(\roman*)]
    \item We introduce the multitype inverse Gaussian (IG) subordinator through the coupled quadratic system \eqref{eq:multitype-IG-root} and establish its representation as the first-passage time of a Brownian additive field. This construction recovers the univariate, diagonal multivariate, and square-root IG limits as successive special cases (Corollaries~\ref{cor:diagonal-IG}--\ref{cor:square-root}).
    
    \item We prove the multivariate Hawkes-to-IG limit throughout the finite-mean calendar-collapse region.  Coupled Poisson family transforms identify the limiting quadratic exponent, collapsed multitype families converge as compound-Poisson subordinators, and a matrix renewal estimate restores calendar time and yields joint product-$M_1$ convergence of the counts and compensators (Theorem~\ref{thm:main} and Lemmas~\ref{lem:multitype-family}--\ref{lem:multitype-collapse}).
    \item Under the common tilted stability and the diagonal-atom assumption, we show that the high-intensity condition $\theta_n\to\infty$ in the general multivariate Hawkes scaling theory can be replaced by the weaker accumulated-activity condition $n\theta_n\to\infty$. 
    Furthermore, we strengthen the weak convergence of the monotone count and its compensator to convergence in the product $M_1$-topology (Theorems~\ref{thm:relaxed-Xu} and~\ref{thm:monotone-M1}).  We also show that the diagonal-atom condition in \cite{Xu2024Scaling} recovers precisely the diagonal specialization of our limit.
    \item For a fixed scalar reproduction-time density, we prove that
    calendar-collapse at any fixed positive time occurs exactly when
    $Tr_T\to\infty$ under the rare-family balance
    (Lemma~\ref{lem:collapse-necessity}).
    
    \item We identify the limiting L\'evy measure from the near-critical Borel mass-function asymptotic, and give a Brownian first-passage interpretation of the immigrant forest.
    We also extend the IG limit to finite-variance age-dependent branching clusters (Propositions~\ref{prop:forest-passage} and~\ref{prop:universality}).
\end{enumerate}

 For background on L\'evy subordinators and IG/NIG families, we refer to \cite{Sato1999,Applebaum2009,ContTankov2004}.  The total-progeny identities used below go back to \cite{Otter1949,Dwass1969}, and branching first-passage limits are treated by \cite{PardouxWakolbinger2011}.  We include these ingredients to identify constants and expose the mechanism, not as separate novelty claims. 
 In recent work \cite{Talarczyk2025,Talarczyk2026}, stable limits for critical marked Hawkes systems further demonstrate how branching tails and Skorokhod topology combine to create jump limits.  
 Those heavy-mark limits are complementary to the finite-variance near-critical mechanism studied here.

 This paper is organized as follows.  Section~\ref{sec:model} states the multivariate model, defines the multitype IG limit, and records its diagonal and scalar specializations.  Section~\ref{sec:general} sharpens the general Hawkes scaling theory and explains exactly which specialization is covered by the diagonal-atom assumption in \cite{Xu2024Scaling}.  Section~\ref{sec:multiproof} proves the multivariate theorem by cluster methods.  Section~\ref{sec:collapsed} records the scalar family laws and proves the exact calendar-collapse condition.  Section~\ref{sec:branching} identifies the Borel tail, gives the Brownian forest interpretation, and establishes finite-variance universality, and Section~\ref{sec:conclusion} concludes.

\section{Multivariate model and main results}\label{sec:model}

Fix $d\ge1$.  For every $T\ge1$, let
$\boldsymbol\phi_T\in L^1(\R_+;\R_+^{d\times d})$ be a nonnegative matrix kernel and set
\begin{equation}\label{eq:matrix-moments}
    \boldsymbol A_T:=\int_0^\infty\boldsymbol\phi_T(s)\dd s,
    \qquad
    \boldsymbol D_T:=\int_0^\infty s\boldsymbol\phi_T(s)\dd s.
\end{equation}
The entry $\phi_{T,ij}(s)$ is the rate at which a type-$j$ event produces type-$i$ children after delay $s$.  Let $\boldsymbol N^T$ be the $d$-variate linear Hawkes process, started with empty history, with constant baseline $\boldsymbol\mu_T\in(0,\infty)^d$ and predictable intensity
\begin{equation}\label{eq:multivariate-hawkes}
    \boldsymbol\lambda_t^T
    =\boldsymbol\mu_T
      +\int_{(0,t)}\boldsymbol\phi_T(t-s)\dd\boldsymbol N_s^T,
    \qquad t\ge0.
\end{equation}
Its integrated intensity and compensated martingale are
\begin{equation}\label{eq:multivariate-compensator}
    \boldsymbol\Lambda_t^T:=\int_0^t\boldsymbol\lambda_s^T\dd s,
    \qquad
    \boldsymbol M_t^T:=\boldsymbol N_t^T-\boldsymbol\Lambda_t^T.
\end{equation}

Recall that a real matrix $\boldsymbol B$ is a nonsingular $M$-matrix if
$\boldsymbol B=s\boldsymbol I-\boldsymbol C$ for some nonnegative matrix
$\boldsymbol C$ and some $s>\rho(\boldsymbol C)$.  Equivalently,
$B_{ij}\le0$ for $i\ne j$ and $\boldsymbol B^{-1}$ is nonnegative.  We impose the following multivariate calendar-collapse assumptions for a scalar sequence $r_T>0$:
\begin{equation}\label{eq:multivariate-scaling}
\begin{gathered}
    r_T\to 0,
    \qquad Tr_T\to\infty,
    \qquad \rho(\boldsymbol A_T)<1,\\
    \boldsymbol B_T:=\frac{\boldsymbol I-\boldsymbol A_T}{r_T}
       \to\boldsymbol B,
    \qquad
    T r_T\boldsymbol\mu_T\to\boldsymbol\eta
       \in(0,\infty)^d,
    \qquad
    \sup_T\|\boldsymbol D_T\|<\infty,
\end{gathered}
\end{equation}
where $\boldsymbol B$ is a nonsingular $M$-matrix.  The particular matrix norm is immaterial in fixed dimension.  Define
\begin{equation}\label{eq:multivariate-scaled-processes}
    \boldsymbol X_t^T:=r_T^2\boldsymbol N_{Tt}^T,
    \qquad
    \boldsymbol C_t^T:=r_T^2\boldsymbol\Lambda_{Tt}^T.
\end{equation}

For every $\boldsymbol u\in\R_+^d$, let
$\boldsymbol z(\boldsymbol u)$ denote the unique solution in $\R_+^d$ of
\begin{equation}\label{eq:multitype-IG-root}
    \boldsymbol B^{\top}\boldsymbol z(\boldsymbol u)
       +\frac12\boldsymbol z(\boldsymbol u)^{\circ2}
       =\boldsymbol u,
\end{equation}
where the square is componentwise.  Existence and uniqueness are
proved in Lemma~\ref{lem:multitype-family}.  We call the
$\R_+^d$-valued subordinator $\boldsymbol Z$ characterized by
\begin{equation}\label{eq:multitype-IG-LT}
    \E\left[e^{-\langle\boldsymbol u,\boldsymbol Z_t\rangle}\right]
    =\exp\{-t\langle\boldsymbol\eta,
                   \boldsymbol z(\boldsymbol u)\rangle\},
    \qquad \boldsymbol u\in\R_+^d,
\end{equation}
the \emph{multitype inverse-Gaussian subordinator} with parameters
$(\boldsymbol B,\boldsymbol\eta)$.  This terminology refers to the Brownian first-passage construction below; it also avoids conflating \eqref{eq:multitype-IG-LT} with other, inequivalent distributions called multivariate inverse Gaussian in the literature.

\begin{remark}\label{rem:multitype-passage}
Let $W_1,\ldots,W_d$ be independent standard Brownian motions and define
\[
    \boldsymbol{\mathcal X}_{\boldsymbol s}
      :=(W_1(s_1),\ldots,W_d(s_d))^{\top}
        -\boldsymbol B\boldsymbol s,
    \qquad \boldsymbol s\in\R_+^d.
\]
Because $\boldsymbol B$ is a nonsingular $M$-matrix, the off-diagonal coordinates of each constituent L\'evy process are nondecreasing.  Write
$\boldsymbol T_{\boldsymbol x}$ for the multivariate first-passage vector of
$\boldsymbol{\mathcal X}$ to level $-\boldsymbol x$, in the sense of (3.2)-(3.3) in 
\cite{ChaumontMarolleau2020}.  To verify the claimed identification, let
$\boldsymbol{\mathcal X}^{(j)}$ be the constituent L\'evy process run by
$s_j$.  Its Laplace exponent, under the convention of that reference, is
\[
 \varphi_j(\boldsymbol q)
 :=\log\E\left[e^{-\left\langle\boldsymbol q,
                    \boldsymbol{\mathcal X}^{(j)}_1\right\rangle}\right]
 =\left(\boldsymbol B^{\top}\boldsymbol q\right)_j
    +\frac12q_j^2,
 \qquad \boldsymbol q\in\R_+^d.
\]
 The first-passage exponent is the nonnegative inverse of $\boldsymbol\varphi=(\varphi_1,\ldots,\varphi_d)^{\top}$ by \cite[Theorem~3.3]{ChaumontMarolleau2020}.
Indeed, the hypotheses of that inversion theorem hold here:
if $\boldsymbol p=\boldsymbol B^{-\mathsf T}\boldsymbol1$, then
$\boldsymbol p>\boldsymbol0$ and
$\boldsymbol\varphi(\boldsymbol p)
 =\boldsymbol1+\boldsymbol p^{\circ2}/2>\boldsymbol0$.
Consequently, for any $\boldsymbol u\in(0,\infty)^d$,
\begin{equation}\label{eq:multitype-first-passage-LT}
 \E\left[e^{-\langle\boldsymbol u,
                       \boldsymbol T_{\boldsymbol x}\rangle}
          \1_{\{\boldsymbol T_{\boldsymbol x}\in\R_+^d\}}\right]
 =e^{-\langle\boldsymbol x,\boldsymbol z(\boldsymbol u)\rangle},
\end{equation}
because $\boldsymbol\varphi(\boldsymbol z(\boldsymbol u))
=\boldsymbol u$ is exactly \eqref{eq:multitype-IG-root}.  Continuity of
$\boldsymbol z$ and monotone convergence theorem extend this identity to the
boundary of $\R_+^d$.  In particular, sending
$\boldsymbol u\downarrow\boldsymbol0$ and using
$\boldsymbol z(\boldsymbol0)=\boldsymbol0$ shows that
$\boldsymbol T_{\boldsymbol x}$ is finite almost surely.  The stationary
independent increment property in the ordered level parameter
\cite[Proposition~3.1(2)]{ChaumontMarolleau2020}, together with the
continuity at zero of its Laplace exponent, gives a c\`adl\`ag
modification of the ray restriction
$(\boldsymbol T_{t\boldsymbol\eta})_{t\ge0}$.
We take this modification as the $\R_+^d$-valued subordinator.
Substituting
$\boldsymbol x=t\boldsymbol\eta$ into \eqref{eq:multitype-first-passage-LT} gives
\eqref{eq:multitype-IG-LT}; hence this ray restriction is a version of
$\boldsymbol Z$.
\end{remark}

The first-passage construction identifies the candidate macroscopic clock and makes its dependence structure explicit through the matrix $\boldsymbol B$.  The following theorem shows that this clock arises as the joint scaling limit of the Hawkes counts and their compensators throughout the finite-mean calendar-collapse regime.

\begin{theorem}\label{thm:main}
Under \eqref{eq:multivariate-scaling}, for every $L>0$, as $T\to\infty$,
\begin{equation}\label{eq:main-convergence}
    \left(\boldsymbol X^T,\boldsymbol C^T\right)
       \Longrightarrow(\boldsymbol Z,\boldsymbol Z)
\end{equation}
in $\cD([0,L],\R^{2d})$ equipped with the product $M_1$-topology.  Here $\boldsymbol Z$ is the multitype IG subordinator defined by
\eqref{eq:multitype-IG-LT}.  For every coordinate in which $Z_i$ is discontinuous with positive probability, neither corresponding marginal convergence in
\eqref{eq:main-convergence} can hold in the $J_1$-topology.
\end{theorem}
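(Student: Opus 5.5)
The plan is a cluster argument that compares $\boldsymbol X^T$ with a calendar-collapsed process. We use the immigration--branching representation of $\boldsymbol N^T$. Type-$j$ immigrants arrive as independent Poisson processes of rate $\mu_{T,j}$. Each immigrant generates a multitype family whose offspring numbers, given a type-$j$ parent, are independent $\Pois(A_{T,ij})$ across $i$, with delays drawn from the normalized densities $\phi_{T,ij}/A_{T,ij}$. We then define the collapsed process $\widetilde{\boldsymbol X}^T_t:=r_T^2\sum_{\text{immigrants }\tau\le Tt}\boldsymbol S(\tau)$, where $\boldsymbol S(\tau)\in\mathbb N^d$ is the type-count vector of the whole family rooted at $\tau$.

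\textbf{Step 1: family transforms and the collapsed limit.} For a type-$j$ root write $q_{T,j}(\boldsymbol u)=\E[e^{-r_T^2\langle\boldsymbol u,\boldsymbol S\rangle}]$. The branching equation reads
\[
q_{T,j}=e^{-r_T^2u_j}\exp\Bigl\{\sum_i A_{T,ij}(q_{T,i}-1)\Bigr\}.
\]
Substituting $1-q_{T,j}=r_Tz_{T,j}$ and expanding the logarithm gives
\[
\boldsymbol B_T^{\top}\boldsymbol z_T+\tfrac12\boldsymbol z_T^{\circ2}+O(r_T|\boldsymbol z_T|^3)=\boldsymbol u.
\]
Monotonicity and an a priori bound, obtained by comparing with $\boldsymbol B^{-\top}$ applied to $\boldsymbol u$, show that $\boldsymbol z_T$ is bounded. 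Every limit point then solves \eqref{eq:multitype-IG-root}, and the $M$-matrix uniqueness in Lemma~\ref{lem:multitype-family} gives $\boldsymbol z_T\to\boldsymbol z(\boldsymbol u)$.

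Poisson thinning then yields
\[
\E e^{-\langle\boldsymbol u,\widetilde{\boldsymbol X}^T_t\rangle}=\exp\{-Tt\langle\boldsymbol\mu_T,\boldsymbol 1-\boldsymbol q_T\rangle\}=\exp\{-t\langle Tr_T\boldsymbol\mu_T,\boldsymbol z_T\rangle\}\to e^{-t\langle\boldsymbol\eta,\boldsymbol z(\boldsymbol u)\rangle}.
\]
The process $\widetilde{\boldsymbol X}^T$ is a compound Poisson subordinator, so convergence of its exponent gives convergence to $\boldsymbol Z$ in $J_1$, and hence in $M_1$.

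\textbf{Step 2: the calendar error.} The two processes differ only through family members born after time $Tt$ whose root arrived before $Tt$, so $0\le\widetilde{\boldsymbol X}^T_t-\boldsymbol X^T_t$. A renewal computation bounds the expected count of such late members by
\[
r_T^2\,\boldsymbol\mu_T^{\top}\text{-weighted}\;\sum_{n\ge1}\boldsymbol A_T^{\,n-1}\boldsymbol D_T\sum_{m\ge0}\boldsymbol A_T^{\,m}\;\lesssim\; r_T^2\,\|\boldsymbol\mu_T\|\,\|(\boldsymbol I-\boldsymbol A_T)^{-1}\|^2\|\boldsymbol D_T\|\asymp\frac{\|\boldsymbol\mu_T\|}{1}\cdot O(1)=O\Bigl(\frac1{Tr_T}\Bigr).
\]
The bound comes from the expected horizon overshoot, which is at most (depth)$\times$(mean delay). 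It tends to $0$ because $Tr_T\to\infty$, and this is the matrix renewal estimate of Lemma~\ref{lem:multitype-collapse}. Fixed-time differences therefore vanish in $L^1$, and the finite-dimensional distributions of $\boldsymbol X^T$ converge to those of $\boldsymbol Z$. For the compensator, $\E[\boldsymbol X^T_t-\boldsymbol C^T_t]=0$ and $\E|\boldsymbol M^T_{Tt}|^2=\E\boldsymbol\Lambda^T_{Tt}$. This gives $r_T^2\boldsymbol M^T_{Tt}=O_P(r_T\sqrt{r_T^2\E\boldsymbol N_{Tt}})$, and $r_T^2\E\boldsymbol N^T_{Tt}\to t\boldsymbol B^{-1}\boldsymbol\eta$ is bounded, so $r_T^2\boldsymbol M^T_{Tt}=O_P(r_T)\to0$. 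Hence $\boldsymbol C^T$ has the same finite-dimensional limits, jointly with $\boldsymbol X^T$.

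\textbf{Step 3: topology.} All coordinates are nondecreasing, and $\boldsymbol Z$ is stochastically continuous. I would invoke the monotone-path principle, Theorem~\ref{thm:monotone-M1}: for nondecreasing paths, finite-dimensional convergence on a dense set including $L$, together with a continuous-in-probability limit, gives $M_1$ tightness and convergence. The $M_1$ oscillation of a monotone path is zero, so only endpoint control at $0$ and $L$ is needed, and $\boldsymbol Z_0=0$ supplies it. This yields product-$M_1$ convergence of $(\boldsymbol X^T,\boldsymbol C^T)$, since both components converge to the same limit $\boldsymbol Z$.

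For the $J_1$ negation, the jumps of $X^T_i$ have size $r_T^2\to0$ and $C^T_i$ is continuous. Under $J_1$ convergence the maximal jump functional is continuous, so the limit $Z_i$ would have to be continuous a.s. This contradicts the assumption that $Z_i$ jumps with positive probability.

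The main obstacle is Step 2 in the genuinely coupled case: one must control the renewal sum uniformly through the non-symmetric $\boldsymbol A_T$ near $\boldsymbol I$. I would try the Perron vector $\boldsymbol p=\boldsymbol B^{-\top}\boldsymbol1$ as a weighting norm, in which $\boldsymbol A_T$ contracts at rate $1-cr_T$.
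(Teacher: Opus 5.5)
Your proposal is correct and follows essentially the paper's proof. It uses the same family transforms with $M$-matrix uniqueness, the same compound-Poisson collapsed limit, the same matrix age bound $r_T^2(\boldsymbol I-\boldsymbol A_T)^{-1}\boldsymbol D_T(\boldsymbol I-\boldsymbol A_T)^{-1}\boldsymbol\mu_T=O((Tr_T)^{-1})$, and the same jump-size obstruction to $J_1$, which needs simplicity of each coordinate count; that follows from the atomless intensity measures.

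The one real difference is that you get tightness by applying Theorem~\ref{thm:monotone-M1} to the $2d$-dimensional pair, rather than from the increment bound \eqref{eq:multitype-increment-bound} together with Doob and Slutsky. The ``main obstacle'' you flag is not one, because $(\boldsymbol I-\boldsymbol A_T)^{-1}=r_T^{-1}\boldsymbol B_T^{-1}$ with $\boldsymbol B_T^{-1}\to\boldsymbol B^{-1}$ already controls the renewal sum exactly.
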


Theorem~\ref{thm:main} is proved by multitype cluster exploration in
Section~\ref{sec:multiproof}.  Its most transparent special case is obtained when the first-order defect matrix is diagonal.

\begin{corollary}\label{cor:diagonal-IG}
If $\boldsymbol B=\operatorname{diag}(b_1,\ldots,b_d)$ with
$b_i>0$, then the coordinates of $\boldsymbol Z$ are independent classical IG subordinators and
\begin{equation}\label{eq:diagonal-IG-LT}
    \E\left[e^{-uZ_{i,t}}\right]
      =\exp\left\{-t\eta_i\left(\sqrt{b_i^2+2u}-b_i\right)\right\},
      \qquad u\ge0.
\end{equation}
\end{corollary}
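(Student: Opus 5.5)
The plan is to solve the quadratic system \eqref{eq:multitype-IG-root} explicitly when $\boldsymbol B$ is diagonal and then read off independence from the factorization of the Laplace transform \eqref{eq:multitype-IG-LT}. We take existence and uniqueness of the root in $\R_+^d$ from Lemma~\ref{lem:multitype-family}. First, $\boldsymbol B=\operatorname{diag}(b_1,\ldots,b_d)$ with $b_i>0$ is indeed a nonsingular $M$-matrix, since its off-diagonal entries vanish and its inverse is diagonal with positive entries. Next, because $\boldsymbol B^{\top}=\boldsymbol B$ is diagonal, the system decouples into the $d$ scalar equations
\[
    b_iz_i+\tfrac12 z_i^2=u_i,\qquad i=1,\ldots,d .
\]
For $u_i\ge0$ the quadratic $\tfrac12 z^2+b_iz-u_i$ has roots $-b_i\pm\sqrt{b_i^2+2u_i}$. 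Exactly one of them is nonnegative, namely $z_i(\boldsymbol u)=\sqrt{b_i^2+2u_i}-b_i$, and this depends only on $u_i$. By uniqueness this is the solution.

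Substituting into \eqref{eq:multitype-IG-LT} gives, for every $\boldsymbol u\in\R_+^d$,
\[
 \E\bigl[e^{-\langle\boldsymbol u,\boldsymbol Z_t\rangle}\bigr]
 =\prod_{i=1}^d\exp\bigl\{-t\eta_i\bigl(\sqrt{b_i^2+2u_i}-b_i\bigr)\bigr\}.
\]
Setting $u_j=0$ for $j\ne i$ yields \eqref{eq:diagonal-IG-LT}. This is the Laplace exponent of a classical IG subordinator, which is the first-passage process of a Brownian motion with drift $-b_i$, scaled by $\eta_i$.

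For fixed $t$, the joint Laplace transform on $\R_+^d$ is the product of the marginal transforms. Since Laplace transforms on $\R_+^d$ determine laws of $\R_+^d$-valued vectors, the random variables $Z_{1,t},\ldots,Z_{d,t}$ are independent. To pass to independence of the coordinate \emph{processes}, note that $\boldsymbol Z$ is a L\'evy process in $\R_+^d$, so its increments over disjoint intervals are independent and each increment is distributed as $\boldsymbol Z_{t-s}$. Each increment therefore has independent coordinates. Combining these two facts, the finite-dimensional distributions of $(Z_1,\ldots,Z_d)$ factor coordinatewise, and by the c\`adl\`ag property the laws on path space factor as well.

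As a sanity check, independence also follows from Remark~\ref{rem:multitype-passage}: with diagonal $\boldsymbol B$, the additive field has independent coordinates $W_i(s_i)-b_is_i$ and no cross drift, so the first-passage vector consists of independent scalar passage times. There is no real obstacle in this argument. The only points needing care are selecting the nonnegative root, which is handled by uniqueness, and the routine passage from marginal independence at fixed $t$ to independence of the processes.
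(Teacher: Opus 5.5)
Your proposal is correct and follows the paper's proof: the system decouples coordinatewise to $z_i(\boldsymbol u)=\sqrt{b_i^2+2u_i}-b_i$, and the exponent in \eqref{eq:multitype-IG-LT} becomes a sum of one-dimensional exponents. Your extra step, which lifts fixed-time independence to independence of the coordinate processes through stationary independent increments, is a routine detail that the paper leaves implicit.
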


\begin{proof}
Equation \eqref{eq:multitype-IG-root} separates coordinatewise and gives
$z_i(\boldsymbol u)=\sqrt{b_i^2+2u_i}-b_i$.  The exponent in
\eqref{eq:multitype-IG-LT} is therefore a sum of one-dimensional exponents, which proves independence and \eqref{eq:diagonal-IG-LT}.
\end{proof}

We now recover the scalar model used in the microstructure interpretation and in the explicit Borel calculations below.  Let
$\phi:\R_+\to\R_+$ be measurable and satisfy
\begin{equation}\label{eq:kernel-assumption}
    \int_0^\infty \phi(s)\dd s=1,
    \qquad
    m_\phi:=\int_0^\infty s\phi(s)\dd s<\infty.
\end{equation}
For $a_T\in(0,1)$ take $\phi_T=a_T\phi$, set $r_T:=1-a_T$, and let
\begin{equation}\label{eq:hawkes}
    \lambda_t^T
    =\mu_T+a_T\int_{(0,t)}\phi(t-s)\dd N_s^T,
    \qquad
    \Lambda_t^T:=\int_0^t\lambda_s^T\dd s.
\end{equation}
The following result is the one-dimensional special case of Corollary~\ref{cor:diagonal-IG} and estalishes the joint convergence of rescaled processes 
\begin{equation}\label{eq:scaled-processes}
    X_t^T:=r_T^2N_{Tt}^T
    \quad \mbox{and}\quad 
    A_t^T:=r_T^2\Lambda_{Tt}^T,\qquad t\geq 0.
\end{equation}

\begin{corollary} \label{cor:scalar-IG}
Under \eqref{eq:kernel-assumption}, if $r_T\to0$, $Tr_T\to\infty$ and $\vartheta_T:=T\mu_Tr_T \to\vartheta\in(0,\infty)$, we have
 \begin{equation}\label{eq:201}
 (X^T,A^T) \Longrightarrow (Y,Y)
 \end{equation}
 in $\cD([0,L],\R^{2})$ equipped with the product $M_1$-topology as $T\to\infty$, where $Y$ is a IG subordinator with $Y_0=0$ and Laplace tranform
\begin{equation}\label{eq:IG-LT}
    \E\left[e^{-uY_t}\right]
      =\exp\left\{-t\vartheta\left(\sqrt{1+2u}-1\right)\right\},
      \qquad u\ge0. 
\end{equation}
 In particular, $Y$ has L\'evy measure and one-time law  
\begin{align}
    \nu_{\mathrm{IG}}(\dd x)
      =\frac{\vartheta}{\sqrt{2\pi}}x^{-3/2}
        e^{-x/2}\1_{(0,\infty)}(x)\dd x
        \quad \mbox{and}\quad 
         Y_t\sim\IG\left(\vartheta t,\vartheta^2t^2\right),
    \qquad t>0. \label{eq:IG-Levy}
\end{align} 
Neither marginal convergence of \eqref{eq:201} can hold in the $J_1$-topology.
\end{corollary}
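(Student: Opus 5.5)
The plan is to reduce Corollary~\ref{cor:scalar-IG} to Theorem~\ref{thm:main} with $d=1$, and then read off the law of $Y$ from its Laplace exponent.

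\emph{Step 1: check the hypotheses.} Take $\phi_T=a_T\phi$. Then $A_T=a_T=1-r_T<1$ and $B_T=(1-A_T)/r_T=1$, so $B=1$; this is trivially a nonsingular $M$-matrix. The immigration condition $Tr_T\mu_T=\vartheta_T\to\vartheta$ supplies $\eta=\vartheta$. The first moment is $D_T=a_Tm_\phi\le m_\phi$, so $\sup_T D_T<\infty$. The conditions $r_T\to0$ and $Tr_T\to\infty$ are assumed. Hence \eqref{eq:multivariate-scaling} holds, and $(X^T,A^T)=(\boldsymbol X^T,\boldsymbol C^T)$ by \eqref{eq:scaled-processes}.

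\emph{Step 2: apply the theorem and identify the limit.} Theorem~\ref{thm:main} gives joint product-$M_1$ convergence to $(Y,Y)$. Here \eqref{eq:multitype-IG-root} reads $z+z^2/2=u$, whose nonnegative root is $z=\sqrt{1+2u}-1$, as in Corollary~\ref{cor:diagonal-IG} with $b=1$. Substituting into \eqref{eq:multitype-IG-LT} gives \eqref{eq:IG-LT}.

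\emph{Step 3: the L\'evy measure.} Since $Y$ is driftless (its exponent is $o(u)$ as $u\to\infty$), it suffices to compute
\[
\int_0^\infty(1-e^{-ux})\,\frac{x^{-3/2}e^{-x/2}}{\sqrt{2\pi}}\dd x .
\]
Using the identity $\int_0^\infty(e^{-px}-e^{-qx})x^{-3/2}\dd x=2\sqrt\pi(\sqrt q-\sqrt p)$ with $p=1/2$ and $q=u+1/2$, this integral equals $\sqrt{2}\bigl(\sqrt{u+1/2}-\sqrt{1/2}\bigr)=\sqrt{1+2u}-1$. Multiplying by $\vartheta$ matches the exponent in \eqref{eq:IG-LT}, which confirms the stated L\'evy measure.

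\emph{Step 4: the one-time law.} For the parametrization $\IG(\mu,\lambda)$ with mean $\mu$ and shape $\lambda$, the Laplace transform is
\[
\exp\Bigl\{\tfrac{\lambda}{\mu}\Bigl(1-\sqrt{1+2\mu^2u/\lambda}\Bigr)\Bigr\}.
\]
With $\mu=\vartheta t$ and $\lambda=\vartheta^2t^2$ we get $\mu^2/\lambda=1$ and $\lambda/\mu=\vartheta t$. This reproduces \eqref{eq:IG-LT}, so $Y_t\sim\IG(\vartheta t,\vartheta^2t^2)$.

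\emph{Step 5: failure of $J_1$.} The L\'evy measure has infinite total mass and charges $(0,\infty)$, so $Y$ is discontinuous with positive probability (indeed almost surely on $[0,L]$). The last clause of Theorem~\ref{thm:main} therefore excludes $J_1$ convergence of either marginal.

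If one wants the argument to be self-contained, the underlying mechanism is the following. $J_1$ limits of processes whose maximal jump tends to zero are continuous, and the maximal jump here is at most $r_T^2\to0$ for $X^T$ (and $0$ for the continuous $A^T$). A continuous limit contradicts the jumps of $Y$.

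The main obstacle is only the bookkeeping of the IG parametrization convention in Step 4; everything else follows directly from Theorem~\ref{thm:main}.
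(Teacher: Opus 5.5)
Your proposal is correct and follows essentially the same route as the paper: you verify the $d=1$ hypotheses of Theorem~\ref{thm:main} with $B_T=1$, $D_T=a_Tm_\phi$, $\eta=\vartheta$, and solve the scalar root $z+z^2/2=u$. You then identify the L\'evy measure by integrating $1-e^{-ux}$, match the IG convention with mean $m$ and variance $m^3/\lambda$, and derive the $J_1$ obstruction from the infinite L\'evy mass together with the last clause of Theorem~\ref{thm:main}.
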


\begin{proof}
Here $\boldsymbol A_T=a_T$, $\boldsymbol B_T=1$,
$\boldsymbol D_T=a_Tm_\phi$, and $\boldsymbol\eta=\vartheta$.
Theorem~\ref{thm:main} and Corollary~\ref{cor:diagonal-IG} apply.  The
L\'evy density follows by integrating $1-e^{-ux}$, and the stated IG
parameters follow from the convention in which $\IG(m,\lambda)$ has mean
$m$ and variance $m^3/\lambda$.

The measure in \eqref{eq:IG-Levy} has infinite mass, so $Y$ has
discontinuous paths almost surely.  The $J_1$ assertion therefore
follows from Theorem~\ref{thm:main}.
\end{proof}

The square-root regime is the scalar specialization in which the natural family-mass scale $r_T^{-2}$ is asymptotic to a constant multiple of $T$.  Corollary~\ref{cor:scalar-IG} then takes the familiar $T^{-1}$ normalization and yields the following parameterized IG limit.

\begin{corollary} \label{cor:square-root}
Assume \eqref{eq:square-root-scaling} and \eqref{eq:kernel-assumption}.  For every $L>0$, as $T\to\infty$,
\[
    \left(\frac{N^T_{T\cdot}}{T},
    \frac{\Lambda^T_{T\cdot}}{T}\right)
    \Longrightarrow (Y^{\gamma,\eta},Y^{\gamma,\eta}),
\]
in $\cD([0,L],\R^{2})$ equipped with the product $M_1$-topology, where the limit process $Y^{\gamma,\eta}$ is uniquely determined by
\begin{equation}\label{eq:square-root-IG-LT}
    \E\left[e^{-uY_t^{\gamma,\eta}}\right]
    =\exp\left\{-t\eta
      \left(\sqrt{\gamma^2+2u}-\gamma\right)\right\}.
\end{equation}
Neither marginal convergence can hold in $J_1$.
\end{corollary}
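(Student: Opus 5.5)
The plan is to deduce Corollary~\ref{cor:square-root} from Corollary~\ref{cor:scalar-IG} by a deterministic rescaling; no new probabilistic estimate is needed. Under \eqref{eq:square-root-scaling} we have $r_T\sim\gamma T^{-1/2}$, so $r_T\to0$ and $Tr_T\sim\gamma\sqrt T\to\infty$. Moreover
\[
\vartheta_T=T\mu_Tr_T=(\sqrt T\mu_T)(\sqrt T r_T)\to\eta\gamma=:\vartheta\in(0,\infty).
\]
Together with \eqref{eq:kernel-assumption}, this verifies every hypothesis of Corollary~\ref{cor:scalar-IG}. That corollary then gives $(X^T,A^T)\Rightarrow(Y,Y)$ in product $M_1$, where $Y$ has exponent $\eta\gamma(\sqrt{1+2u}-1)$.

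Next I relate the two normalizations:
\[
\frac{N^T_{T\cdot}}{T}=\frac{1}{Tr_T^2}X^T,\qquad \frac{1}{Tr_T^2}\to\gamma^{-2}=:c_\infty,
\]
and similarly for $\Lambda^T_{T\cdot}/T=A^T/(Tr_T^2)$. The map $(c,x,y)\mapsto(cx,cy)$ from $(0,\infty)\times\cD([0,L],\R^2)$ to $\cD([0,L],\R^2)$ is continuous for the product $M_1$ topology. The reason is that multiplication by a positive scalar preserves parametric representations of completed graphs, and $\|cx-c'x'\|$ along matched parametrizations is at most $|c|\,\|x-x'\|+|c-c'|\,\|x\|$, where $\|x\|$ is bounded on compacts. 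Since $c_T:=(Tr_T^2)^{-1}$ is deterministic and converges, Slutsky's lemma combined with the continuous mapping theorem gives $(c_TX^T,c_TA^T)\Rightarrow(c_\infty Y,c_\infty Y)$.

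It remains to identify the limit law. The Laplace exponent of $\gamma^{-2}Y_t$ is
\[
t\eta\gamma\Bigl(\sqrt{1+2u\gamma^{-2}}-1\Bigr)=t\eta\bigl(\sqrt{\gamma^2+2u}-\gamma\bigr),
\]
which is exactly \eqref{eq:square-root-IG-LT}. Hence $Y^{\gamma,\eta}:=\gamma^{-2}Y$ is a subordinator whose law is determined by this exponent, since Lévy processes are characterized by their one-dimensional marginals.

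For the $J_1$ negative statement, I argue by contradiction. Suppose $N^T_{T\cdot}/T$ converged in $J_1$. Multiplying by the deterministic $Tr_T^2\to\gamma^2$ is $J_1$-continuous by the same argument, so $X^T$ would converge in $J_1$. This contradicts Corollary~\ref{cor:scalar-IG}, which applies because $Y$ has jumps almost surely. The same reasoning handles $\Lambda^T_{T\cdot}/T$.

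The only step needing any care is the continuity of scalar multiplication with a varying constant in $M_1$, and that step is routine.
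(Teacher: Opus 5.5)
Your proposal is correct and follows the paper's proof essentially step for step: you verify the hypotheses of Corollary~\ref{cor:scalar-IG} with $\vartheta=\gamma\eta$, rescale deterministically by $(Tr_T^2)^{-1}\to\gamma^{-2}$, identify the exponent $\eta(\sqrt{\gamma^2+2u}-\gamma)$, and pull the $J_1$ obstruction back through the same rescaling. Your explicit check that multiplication by a convergent positive constant is continuous in $M_1$ and in $J_1$ fills in a detail the paper leaves implicit.
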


\begin{proof}
Here $Tr_T\to\infty$, $\vartheta_T=T\mu_Tr_T\to\gamma\eta$, and $Tr_T^2\to\gamma^2$.  Corollary~\ref{cor:scalar-IG} and
\[
    \frac{N^T_{Tt}}{T}
    =\frac{r_T^2N^T_{Tt}}{Tr_T^2}
\]
give the result.  Scaling the IG clock in \eqref{eq:IG-LT} by
$\gamma^{-2}$ gives \eqref{eq:square-root-IG-LT}.
Since $Tr_T^2\to\gamma^2>0$, the $J_1$ assertion is also preserved
by this deterministic rescaling and follows from
Corollary~\ref{cor:scalar-IG}.
\end{proof}

\begin{remark}\label{rem:matching}
For $g_0>0$ and $\nu\ne0$, the IG process in
\cite[Theorem~3.1 and Definition~2.2]{AbiJaberAttalRosenbaum2025}
has process parameters
$\left(\frac{g_0}{1+\lambda},\frac{g_0^2}{\nu^2}\right)$.
In particular, its marginal at time $t>0$ has law
\[
    \IG\left(\frac{g_0t}{1+\lambda},
                   \frac{g_0^2t^2}{\nu^2}\right).
\]
Taking \eqref{eq:parameter-map} into Corollary~\ref{cor:square-root}
gives exactly these process and marginal parameters.  Thus the Volterra singular limit and the direct branching limit meet at the same IG law and the same Brownian first-passage structure.
\end{remark}

Subordinating an independent Brownian motion by the scalar IG clock
gives the following normal inverse Gaussian process.

\begin{corollary}\label{cor:NIG}
Let $W$ be a standard Brownian motion independent of $Y^{\gamma,\eta}$ and let $\varsigma>0$.  Then
\[
    P_t:=\varsigma W_{Y_t^{\gamma,\eta}},\qquad t\ge0,
\]
is a symmetric normal inverse Gaussian L\'evy process with characteristic function
\begin{equation}\label{eq:NIG-characteristic}
    \E\left[e^{\mathrm{i}uP_t}\right]
    =\exp\left\{-t\eta
       \left(\sqrt{\gamma^2+\varsigma^2u^2}-\gamma\right)\right\}.
\end{equation}
\end{corollary}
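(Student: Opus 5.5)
The plan is to condition on the clock $Y^{\gamma,\eta}$ and use the independence of $W$. Fix $t\ge0$ and $u\in\R$. Since $W$ is independent of $Y:=Y^{\gamma,\eta}$ and $Y_t\ge0$, conditioning on $\sigma(Y)$ gives
\[
    \E\left[e^{\mathrm{i}uP_t}\,\middle|\,Y\right]
    =\exp\left\{-\tfrac12\varsigma^2u^2Y_t\right\}
\]
almost surely, from the Gaussian characteristic function $\E[e^{\mathrm{i}v W_s}]=e^{-v^2s/2}$ evaluated at $s=Y_t$. Formally this is the substitution/freezing lemma applied to the bounded jointly measurable map $(s,w)\mapsto e^{\mathrm{i}u\varsigma w(s)}$ on $\R_+\times C(\R_+)$.

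Taking expectations and applying \eqref{eq:square-root-IG-LT} with the nonnegative argument $\varsigma^2u^2/2$ gives
\[
    \E\left[e^{\mathrm{i}uP_t}\right]
    =\exp\left\{-t\eta\left(\sqrt{\gamma^2+\varsigma^2u^2}-\gamma\right)\right\},
\]
which is \eqref{eq:NIG-characteristic}. The right-hand side is even in $u$, so each $P_t$ is symmetric.

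It remains to show that $P$ is a L\'evy process; this is the standard Bochner subordination theorem \cite[Theorem~30.1]{Sato1999}. Concretely, I would prove the joint statement: for $0=t_0<t_1<\dots<t_n$ and $u_1,\dots,u_n\in\R$, condition on $Y$ and use that Brownian increments over the disjoint random intervals $(Y_{t_{k-1}},Y_{t_k}]$ are conditionally independent Gaussians with variances $Y_{t_k}-Y_{t_{k-1}}$. This makes the joint characteristic function of $(P_{t_k}-P_{t_{k-1}})_{k\le n}$ equal to
\[
    \E\left[\prod_k\exp\left\{-\tfrac12\varsigma^2u_k^2\left(Y_{t_k}-Y_{t_{k-1}}\right)\right\}\right].
\]
The independent stationary increments of $Y$ then factor this into the product of the one-increment formulas. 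Hence the increments of $P$ are independent and stationary.

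Finally, $P$ has c\`adl\`ag paths because $W$ is continuous and $Y$ is c\`adl\`ag, and $P_0=0$ because $Y_0=0$. The exponent $\eta(\sqrt{\gamma^2+\varsigma^2u^2}-\gamma)$ is the symmetric NIG exponent with tail parameter $\alpha=\gamma/\varsigma$, skewness $\beta=0$, and scale $\delta=\eta\varsigma$. The only point requiring care is the measurability in the conditioning step, and it is routine.
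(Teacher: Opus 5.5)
Your proposal is correct and follows the paper's own argument: condition on the clock, use the Gaussian characteristic function, and evaluate \eqref{eq:square-root-IG-LT} at the nonnegative argument $\varsigma^2u^2/2$. Beyond that, you also verify the L\'evy property through conditionally independent Brownian increments (Bochner subordination) and identify the NIG parameters $(\alpha,\beta,\delta)=(\gamma/\varsigma,0,\eta\varsigma)$, both of which the paper leaves implicit.
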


\begin{proof}
Conditioning on $Y^{\gamma,\eta}$ and using \eqref{eq:square-root-IG-LT} gives
\[
    \E\left[e^{\mathrm{i}uP_t}\right]
    =\E\left[e^{-\varsigma^2u^2Y_t^{\gamma,\eta}/2}\right],
\]
which is \eqref{eq:NIG-characteristic}.  
\end{proof}

\section{Beyond the high-intensity regime and a general \texorpdfstring{$M_1$}{M1} upgrade}\label{sec:general}

Theorem~\ref{thm:main} is genuinely multivariate, whereas the hyper-rough limit in \cite[Theorem~3.1]{AbiJaberAttalRosenbaum2025} is one-dimensional.  In this section we compare our result with the more general $d$-variate, possibly time-inhomogeneous framework of \cite[Theorems~2.9, 2.12, and~2.13]{Xu2024Scaling}.  The formulation below is inherited from that work.  Our contributions here are to weaken its high-intensity assumption under common tilted stability, provide the Riccati existence and identification argument, strengthen convergence of the monotone count and compensator to product $M_1$, and locate the multitype IG specialization precisely.  In particular, the diagonal-atom condition of \cite{Xu2024Scaling} applies when $\boldsymbol B$ is diagonal; the coupled non-diagonal case of Theorem~\ref{thm:main} is covered by the cluster proof in Section~\ref{sec:multiproof}.

At the scalar level, \cite[Proposition~2.4]{Zitkovic2026Feller} establishes uniqueness for bounded nonnegative solutions of the Riccati--Volterra equation with a positive geometrically infinitely divisible (PGID) memory measure, including an atom at zero.  Theorem~\ref{thm:relaxed-Xu} below treats multivariate complex Fourier--Laplace test functions, identifies the joint limit of the Hawkes counts, compensators, and compensated martingales, and proves local $L^p$ convergence of the prelimit Riccati solutions for $1\le p<\infty$.  The scalar parameter limits and their $M_1$ interpretation in \cite[Proposition~3.6 and Remark~3.8]{Zitkovic2026Feller} concern cumulative Feller random measures; Corollary~\ref{cor:general-M1} gives joint product-$M_1$ convergence of the original Hawkes counts and compensators.

For the specialization relevant to this paper, $\theta_T=(Tr_T^2)^{-1}$.  Consequently, the original assumption $\theta_T\to\infty$ covers only the subregion $Tr_T^2\to0$; it excludes both the square-root boundary $Tr_T^2\to\gamma^2$ and the region $Tr_T^2\to\infty$.  Replacing it by $T\theta_T=r_T^{-2}\to\infty$ allows the general theory to cover the entire calendar-collapse region.  The formulation below also fixes the distinction between microscopic intensity and the number of events accumulated over a macroscopic interval.

\subsection{The intrinsic activity scale}

For a fixed dimension $d$, let $\boldsymbol N_n$ be a $d$-variate linear Hawkes process \cite[equations~(2.1)--(2.2)]{Xu2024Scaling}.  For every $n$, let
\[
    \boldsymbol\mu_n\in L^1_{\mathrm{loc}}\left(\R_+;\R_+^d\right),
    \qquad
    \boldsymbol\phi_n\in L^1\left(\R_+;\R_+^{d\times d}\right)
\]
be deterministic and nonnegative.  Here $\mu_{n,i}(t)$ is the possibly time-dependent exogenous intensity of component $i$, while $\phi_{n,ij}(u)$ is the excitation exerted on component $i$ by an event of component $j$ after a lag $u$.  We set $\boldsymbol\phi_n(0)=\boldsymbol0$.  The predictable intensity is
\begin{equation}\label{eq:general-Hawkes}
    \boldsymbol\lambda_n(t)
    =\boldsymbol\mu_n(t)
     +\int_{[0,t)}\boldsymbol\phi_n(t-s)\dd\boldsymbol N_n(s),
    \qquad t\ge0,
\end{equation}
Define the compensator and compensated martingale by
\[
    \boldsymbol C_n(t):=\int_0^t\boldsymbol\lambda_n(s)\dd s,
    \qquad
    \widetilde{\boldsymbol N}_n(t):=\boldsymbol N_n(t)-\boldsymbol C_n(t).
\]
Set
\[
    \boldsymbol A_n:=\int_0^\infty\boldsymbol\phi_n(s)\dd s
\]
(the integral is entrywise), and let
$\boldsymbol R_n:=\sum_{k=1}^{\infty}\boldsymbol\phi_n^{*k}$ be the resolvent.  Instead of starting from a microscopic intensity scale $\theta_n$, introduce directly an activity scale $b_n>0$ with $b_n\to\infty$ and set
\begin{equation}\label{eq:general-rescaling}
\begin{aligned}
    \boldsymbol{\mathcal A}^{(n)}(t)
       &:=\frac{\boldsymbol C_n(nt)}{b_n^2},
    &\qquad
    \boldsymbol{\mathcal N}^{(n)}(t)
       &:=\frac{\boldsymbol N_n(nt)}{b_n^2},\\
    \boldsymbol{\mathcal M}^{(n)}(t)
       &:=\frac{\widetilde{\boldsymbol N}_n(nt)}{b_n},
    &
    \boldsymbol R^{(n)}(t)
       &:=\frac{n}{b_n}\boldsymbol R_n(nt),\\
    \boldsymbol\mu^{(n)}(t)
       &:=\frac{n}{b_n}\boldsymbol\mu_n(nt),
    &
    \boldsymbol H^{(n)}(t)
       &:=\frac{\boldsymbol\mu^{(n)}(t)}{b_n}
          +\boldsymbol R^{(n)}*\boldsymbol\mu^{(n)}(t).
\end{aligned}
\end{equation}
The parameterization in \cite[equations~(2.8)--(2.9)]{Xu2024Scaling} is recovered through
\begin{equation}\label{eq:activity-reparameterization}
    \theta_n=\frac{b_n^2}{n},
    \qquad \sqrt{n\theta_n}=b_n,
    \qquad \sqrt{n/\theta_n}=\frac{n}{b_n},
    \qquad n\theta_n=b_n^2.
\end{equation}
Thus $\theta_n$ measures the microscopic intensity scale, whereas $b_n^2$ measures the total activity accumulated over a time interval of length $n$.

Let $\boldsymbol K$ be a nonnegative $d\times d$ matrix with spectral radius one, and let $\ell$ be the algebraic multiplicity of its eigenvalue one.  Set $\mathtt I:=\{1,\ldots,\ell\}$ and $\mathtt J:=\{\ell+1,\ldots,d\}$.  We impose explicitly the admissibility hypotheses of \cite[Definition~2.5 and Assumption~2.8]{Xu2024Scaling}.  Let $\boldsymbol\Phi\in C(\R_+;\R^{d\times d})$.  This function is called $\boldsymbol K$-admissible if there exist an orthogonal matrix $\boldsymbol Q$, a real matrix $\boldsymbol U$, and a constant $\lambda^+\ge0$ such that
\begin{equation}\label{eq:general-Schur-decomposition}
    \boldsymbol K=\boldsymbol Q\boldsymbol U\boldsymbol Q^{\top},
    \qquad
    \boldsymbol U_{\mathtt I\mathtt I}=\mathrm{Id}_{\ell},
    \qquad
    \boldsymbol U_{\mathtt J\mathtt I}=\boldsymbol0,
\end{equation}
and the following invertibility condition holds for an $\ell\times\ell$ matrix $\boldsymbol G(\lambda)$:
\begin{equation}\label{eq:general-admissibility}
\begin{gathered}
    \boldsymbol G(\lambda)
    :=
    \left(\boldsymbol Q^{\top}
          \boldsymbol\Phi(\lambda)\boldsymbol Q
    \right)_{\mathtt I\mathtt I}
    +\boldsymbol U_{\mathtt I\mathtt J}
    \left(\mathrm{Id}_{d-\ell}
          -\boldsymbol U_{\mathtt J\mathtt J}\right)^{-1}
    \left(\boldsymbol Q^{\top}
          \boldsymbol\Phi(\lambda)\boldsymbol Q
    \right)_{\mathtt J\mathtt I},\\
    \det\boldsymbol G(\lambda)\ne0,
    \qquad \lambda\ge\lambda^+.
\end{gathered}
\end{equation}
Empty blocks are omitted when $\ell=d$.  The stable block $\boldsymbol U_{\mathtt J\mathtt J}$ has spectral radius at most one and no eigenvalue equal to one, so $\mathrm{Id}_{d-\ell}-\boldsymbol U_{\mathtt J\mathtt J}$ is invertible.  This block formulation permits real $2\times2$ Schur blocks for complex eigenvalues.
In addition to algebraic admissibility, we require a common exponential
tilt at which the kernels are subcritical: for some $\lambda_0\ge\lambda^+$,
\begin{equation}\label{eq:general-tilted-stability}
    \rho\!\left(\mathcal L_{\boldsymbol\phi_n}(\lambda_0/n)\right)<1
    \qquad\text{for all sufficiently large }n.
\end{equation}
This condition ensures positivity of the resolvent transforms.
In the scalar case it excludes $\Phi\equiv-1$, which is algebraically
admissible but would require a negative potential measure.
Under the convergence assumptions below,
Lemma~\ref{lem:positive-resolvent-limit} proves that the associated
$(\boldsymbol K,\boldsymbol\Phi)$-potential measure $\boldsymbol\Pi$
is the unique nonnegative, locally finite matrix-valued measure with
entrywise Laplace transform
\begin{equation}\label{eq:general-potential}
    \mathcal L_{\boldsymbol\Pi}(\lambda)
    =\boldsymbol Q
      \begin{pmatrix}
        \left(\boldsymbol G(\lambda)\right)^{-1}
        &\left(\boldsymbol G(\lambda)\right)^{-1}
         \boldsymbol U_{\mathtt I\mathtt J}
         \left(\mathrm{Id}_{d-\ell}
               -\boldsymbol U_{\mathtt J\mathtt J}\right)^{-1}\\
        \boldsymbol0&\boldsymbol0
      \end{pmatrix}
      \boldsymbol Q^{\top},
    \qquad \lambda\ge\lambda_0.
\end{equation}
We further assume that the atom $\boldsymbol\Pi(\{0\})$ is a diagonal matrix, namely,
\begin{equation}\label{eq:general-diagonal-atom}
    \Pi_{ij}(\{0\})=0,\qquad i\ne j.
\end{equation}
The admissibility, tilted-stability, and diagonal-atom requirements are
distinct from the three convergence assumptions below.
The assumptions on kernel criticality and cumulative exogenous activity, written in the intrinsic scale, are
\begin{align} \label{eq:general-mass}
    &\boldsymbol A_n\to\boldsymbol K, \quad 
        b_n\left[\boldsymbol K-
       \mathcal L_{\boldsymbol\phi_n}(\lambda/n)\right]
       \to\boldsymbol\Phi(\lambda),
       \qquad \lambda\ge0
\end{align}
and 
\begin{align} \label{eq:general-exogenous}
    &\mathcal I_{\boldsymbol H^{(n)}}(t)
       :=\int_0^t\boldsymbol H^{(n)}(s)\dd s
       \to\boldsymbol\Upsilon(t),
\end{align}
at every continuity point of a nondecreasing function $\boldsymbol\Upsilon\in\cD(\R;\R_+^d)$ that is zero on $(-\infty,0)$.  We extend the cumulative functions on the left-hand side by zero to negative times and use the Stieltjes convention $\boldsymbol\Upsilon(0-)=\boldsymbol0$.  Here $\boldsymbol H^{(n)}$ is the resolvent-adjusted exogenous intensity, $\mathcal I_{\boldsymbol H^{(n)}}$ is its cumulative exogenous activity, and $\boldsymbol\Upsilon$ is the limiting cumulative exogenous activity.  The symbol $\mathcal L$ denotes the entrywise Laplace transform, and all vector squares below are componentwise.

The common tilt makes the Schur-complement limit a transform of a
positive measure.  The following lemma establishes this fact directly
and provides the resolvent bounds used in the Riccati argument.

\begin{lemma} \label{lem:positive-resolvent-limit}
Assume $b_n\to\infty$, 
\eqref{eq:general-Schur-decomposition}-\eqref{eq:general-tilted-stability} and \eqref{eq:general-mass}.  Then the nonnegative measure
$\boldsymbol\Pi$ in \eqref{eq:general-potential} exists uniquely and
\begin{equation}\label{eq:relaxed-resolvent-convergence}
    \boldsymbol R^{(n)}(t)\dd t \to\boldsymbol\Pi(\dd t),
\end{equation}
entrywise vaguely on $\mathbb{R}_+$ as $n\to\infty$.  Moreover,  its exponentially tilted transforms satisfy that as $n\to\infty$, 
\begin{equation}\label{eq:positive-resolvent-transform}
 \mathcal L_{\boldsymbol R^{(n)}}(\lambda)
 =\frac1{b_n}
       \left(\boldsymbol I-
          \mathcal L_{\boldsymbol\phi_n}(\lambda/n)\right)^{-1}
       \mathcal L_{\boldsymbol\phi_n}(\lambda/n)
 \to \mathcal L_{\boldsymbol\Pi}(\lambda),
 \qquad \lambda\ge\lambda_0.
\end{equation}
\end{lemma}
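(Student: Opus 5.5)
We first establish the transform identity in \eqref{eq:positive-resolvent-transform}, then prove convergence of the transforms, and finally deduce existence and uniqueness of $\boldsymbol\Pi$ together with vague convergence via a matrix version of the continuity theorem for Laplace transforms.

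\emph{The identity.} Write $\boldsymbol P_n(\lambda):=\mathcal L_{\boldsymbol\phi_n}(\lambda/n)$. By \eqref{eq:general-tilted-stability} and monotonicity of the Laplace transform in $\lambda$, $\rho(\boldsymbol P_n(\lambda))<1$ for all $\lambda\ge\lambda_0$ and all large $n$. Hence the Neumann series $\sum_{k\ge1}\boldsymbol P_n(\lambda)^k=(\boldsymbol I-\boldsymbol P_n)^{-1}\boldsymbol P_n$ converges. Since $\mathcal L_{\boldsymbol\phi_n^{*k}}=\boldsymbol P_n^k$, Tonelli's theorem gives $\mathcal L_{\boldsymbol R_n}(\lambda/n)=(\boldsymbol I-\boldsymbol P_n)^{-1}\boldsymbol P_n$, finite and entrywise nonnegative. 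The change of variables $t\mapsto nt$ in $\boldsymbol R^{(n)}(t)=\frac{n}{b_n}\boldsymbol R_n(nt)$ then yields the displayed formula.

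\emph{Convergence.} By \eqref{eq:general-mass}, $\boldsymbol I-\boldsymbol P_n=(\boldsymbol I-\boldsymbol K)+b_n^{-1}\boldsymbol\Phi(\lambda)+o(b_n^{-1})$ and $\boldsymbol P_n\to\boldsymbol K$. Conjugating by $\boldsymbol Q$ and using \eqref{eq:general-Schur-decomposition}, the matrix $\boldsymbol Q^\top(\boldsymbol I-\boldsymbol P_n)\boldsymbol Q$ has block form
\[
\begin{pmatrix} b_n^{-1}\boldsymbol\Phi_{\mathtt I\mathtt I}+o(b_n^{-1}) & -\boldsymbol U_{\mathtt I\mathtt J}+o(1)\\ b_n^{-1}\boldsymbol\Phi_{\mathtt J\mathtt I}+o(b_n^{-1}) & \mathrm{Id}-\boldsymbol U_{\mathtt J\mathtt J}+o(1)\end{pmatrix},
\]
where $\boldsymbol\Phi$ abbreviates $\boldsymbol Q^\top\boldsymbol\Phi(\lambda)\boldsymbol Q$. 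The $\mathtt J\mathtt J$ block is invertible in the limit, so we invert by the Schur complement with respect to it. That complement equals $b_n^{-1}\big(\boldsymbol G(\lambda)+o(1)\big)$, which is invertible for large $n$ because $\lambda\ge\lambda_0\ge\lambda^+$ and \eqref{eq:general-admissibility} holds. Block inversion then shows:
\begin{itemize}[label=]
\item the $\mathtt I\mathtt I$ block of $b_n^{-1}(\boldsymbol I-\boldsymbol P_n)^{-1}$ tends to $\boldsymbol G^{-1}$;
\item the $\mathtt I\mathtt J$ block tends to $\boldsymbol G^{-1}\boldsymbol U_{\mathtt I\mathtt J}(\mathrm{Id}-\boldsymbol U_{\mathtt J\mathtt J})^{-1}$;
\item the $\mathtt J\mathtt I$ block is $O(b_n^{-1})$ and the $\mathtt J\mathtt J$ block is $O(1)$, so both tend to $\boldsymbol 0$ after the factor $b_n^{-1}$.
\end{itemize}
Multiplying on the right by $\boldsymbol P_n\to\boldsymbol K=\boldsymbol Q\boldsymbol U\boldsymbol Q^\top$ gives the right-hand side of \eqref{eq:general-potential}. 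This step requires care: right multiplication by $\boldsymbol U$ fixes the $\mathtt I$-columns but maps the $\mathtt J$-columns to $\boldsymbol G^{-1}\big(\boldsymbol U_{\mathtt I\mathtt J}+\boldsymbol U_{\mathtt I\mathtt J}(\mathrm{Id}-\boldsymbol U_{\mathtt J\mathtt J})^{-1}\boldsymbol U_{\mathtt J\mathtt J}\big)=\boldsymbol G^{-1}\boldsymbol U_{\mathtt I\mathtt J}(\mathrm{Id}-\boldsymbol U_{\mathtt J\mathtt J})^{-1}$, as required. This algebraic block bookkeeping, including the claim that the $o(1)$ errors do not spoil the Schur complement at order $b_n^{-1}$, is the main routine obstacle.

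\emph{Existence, uniqueness and vague convergence.} Each entry of $\boldsymbol R^{(n)}(t)\,\mathrm dt$ is a nonnegative measure whose Laplace transform at $\lambda_0$ converges, so it is bounded in $n$. Thus $e^{-\lambda_0t}R^{(n)}_{ij}(t)\,\mathrm dt$ is a bounded family of finite measures on $\R_+$. Helly's selection theorem provides vague subsequential limits $\tilde\Pi_{ij}$, which are nonnegative. Transforms at $\lambda>\lambda_0$ converge along subsequences because $e^{-(\lambda-\lambda_0)t}$ is continuous and vanishes at infinity. At $\lambda=\lambda_0$ itself we use continuity from the right of the limit transform (Schur-complement limits are continuous in $\lambda$ because $\boldsymbol\Phi$ is continuous) together with monotone convergence. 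Therefore every subsequential limit $\boldsymbol\Pi:=e^{\lambda_0t}\tilde{\boldsymbol\Pi}$ has transform \eqref{eq:general-potential} on $(\lambda_0,\infty)$. A Laplace transform on a half-line determines a nonnegative locally finite measure uniquely. This gives existence and uniqueness, shows the whole sequence converges vaguely, and establishes \eqref{eq:relaxed-resolvent-convergence}.

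The most delicate point is the endpoint $\lambda=\lambda_0$. Excluding mass escaping to infinity there may require uniform integrability. If it fails, we would instead state the transform convergence at $\lambda_0$ directly from the algebraic step, which holds for every $\lambda\ge\lambda_0$ independently of the measure argument.
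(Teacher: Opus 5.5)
Your proposal is correct and follows essentially the paper's route: the Neumann-series/Tonelli identity, Schur-complement block inversion in the $\boldsymbol Q$ coordinates, and a Laplace-continuity argument for the tilted measures $e^{-\lambda_0 t}R^{(n)}_{ij}(t)\dd t$. The paper shortens two steps: it replaces your right-multiplication bookkeeping by the identity $b_n^{-1}(\boldsymbol I-\boldsymbol P_n)^{-1}\boldsymbol P_n=b_n^{-1}(\boldsymbol I-\boldsymbol P_n)^{-1}-b_n^{-1}\boldsymbol I$, and it replaces Helly plus subsequence uniqueness by normalizing the masses and citing the extended continuity theorem. Your endpoint worry is unnecessary: monotone convergence and right-continuity of the limit transform give $\mathcal L_{\boldsymbol\Pi}(\lambda_0)$ equal to the right-hand side of \eqref{eq:general-potential} at $\lambda_0$, which matches the algebraic limit of the masses, so no mass escapes and no uniform integrability is needed.
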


\begin{proof}
For $\lambda\ge\lambda_0$, let
$A_n(\lambda):=\mathcal L_{\boldsymbol\phi_n}(\lambda/n)$ and
$F_n(\lambda):=b_n(\boldsymbol K-A_n(\lambda))$.
Positivity gives $0\le A_n(\lambda)\le A_n(\lambda_0)$.
The spectral radius is monotone on nonnegative matrices, so
\eqref{eq:general-tilted-stability} and the Neumann series give
$\left(\boldsymbol I-A_n(\lambda)\right)^{-1}\ge0$ for large $n$.
Tonelli's theorem applied to the resolvent series then gives the
identity in \eqref{eq:positive-resolvent-transform}.

Let $\widetilde F_n:=\boldsymbol Q^{\top}F_n(\lambda)\boldsymbol Q$ and
$E_n:=\mathrm{Id}_{d-\ell}-\boldsymbol U_{\mathtt J\mathtt J}
       +b_n^{-1}\widetilde F_{n,\mathtt J\mathtt J}$.
In the $\boldsymbol Q$ coordinates,
\[
 \boldsymbol I-A_n(\lambda)
 \quad\text{has blocks}\quad
 \begin{pmatrix}
 b_n^{-1}\widetilde F_{n,\mathtt I\mathtt I}
   &-\boldsymbol U_{\mathtt I\mathtt J}
          +b_n^{-1}\widetilde F_{n,\mathtt I\mathtt J}\\
 b_n^{-1}\widetilde F_{n,\mathtt J\mathtt I}&E_n
 \end{pmatrix}.
\]
The block $E_n$ is invertible for large $n$.  Its Schur complement
$S_n$ satisfies
\[
 b_n S_n
 =\widetilde F_{n,\mathtt I\mathtt I}
  +\left(\boldsymbol U_{\mathtt I\mathtt J}
             -b_n^{-1}\widetilde F_{n,\mathtt I\mathtt J}\right)
      E_n^{-1}\widetilde F_{n,\mathtt J\mathtt I}
 \longrightarrow\boldsymbol G(\lambda).
\]
The block inverse formula therefore gives
\[
 \frac1{b_n}\left(\boldsymbol I-A_n(\lambda)\right)^{-1}
 \longrightarrow P(\lambda),
\]
where $P(\lambda)$ is the right-hand side of
\eqref{eq:general-potential}.  The case $\ell=d$ follows by
inverting $F_n(\lambda)$ directly.  Since
\[
 \frac1{b_n}\left(\boldsymbol I-A_n(\lambda)\right)^{-1}A_n(\lambda)
 =\frac1{b_n}\left(\boldsymbol I-A_n(\lambda)\right)^{-1}
     -\frac{\boldsymbol I}{b_n},
\]
the transforms in \eqref{eq:positive-resolvent-transform} converge to
$P(\lambda)$.  This function is continuous for $\lambda\ge\lambda_0$
by \eqref{eq:general-admissibility}.

For each entry, consider the finite positive measure
$e^{-\lambda_0 t}R_{ij}^{(n)}(t)\dd t$.
Its mass converges to $P_{ij}(\lambda_0)$, and its Laplace transform
at $a\ge0$ converges to $P_{ij}(\lambda_0+a)$.
If the limiting mass is positive, normalize these measures by their
masses and apply the extended continuity theorem for Laplace transforms
\cite[Theorem~6.23]{Kallenberg2021}; if it is zero, the measures converge
to zero in total variation.  Thus they converge weakly to a finite
positive measure $\nu_{ij}$.  Let
$\Pi_{ij}(\dd t):=e^{\lambda_0 t}\nu_{ij}(\dd t)$.
These measures are locally finite, have transform $P$, and satisfy
\eqref{eq:relaxed-resolvent-convergence}.  Uniqueness follows from
uniqueness of the Laplace transform of the finite tilted measures
\cite[Theorem~6.3]{Kallenberg2021}.
\end{proof}

The next theorem establishes the scaling limit in the activity scale
$b_n$, under common tilted stability.  Its direct proof follows the
local Riccati estimates below.

\begin{theorem} \label{thm:relaxed-Xu}
Assume $b_n\to\infty$ and \eqref{eq:general-mass}--\eqref{eq:general-exogenous},
together with \eqref{eq:general-Schur-decomposition}--\eqref{eq:general-admissibility},
the common tilted-stability condition \eqref{eq:general-tilted-stability},
and the diagonal-atom condition \eqref{eq:general-diagonal-atom}.
Then the scaling conclusions of
\cite[Theorems~2.9, 2.12, and~2.13]{Xu2024Scaling} hold in this setting.
More precisely, there exist a nondecreasing process $\boldsymbol\Xi$ and a
square-integrable martingale $\boldsymbol M$ with the following properties.
\begin{enumerate}[label=(\roman*),leftmargin=*]
\item \emph{Process convergence.}
The rescaled processes satisfy
\begin{equation}\label{eq:general-S-limit}
   \left(\boldsymbol{\mathcal A}^{(n)},
         \boldsymbol{\mathcal N}^{(n)},
         \boldsymbol{\mathcal M}^{(n)}\right)
   \Longrightarrow
   (\boldsymbol\Xi,\boldsymbol\Xi,\boldsymbol M),
\end{equation}
in Jakubowski's $S$-topology.  The finite-dimensional distributions
also converge at every positive continuity point of
$\boldsymbol\Upsilon$.

\item \emph{Riccati equation and regularity.}
For row-vector test functions
\[
 \boldsymbol f\in C^1\left(\R_+;\mathbb C_-^d\right),
 \qquad
 \boldsymbol h\in C^1\left(\R_+;\mathrm i\R^d\right),
 \qquad
 \mathbb C_-:=\left\{z\in\mathbb C:\operatorname{Re}z\le0\right\},
\]
the prelimit solutions in \eqref{eq:local-prelimit-Riccati} converge
in $L^p(0,L)$, for every $L>0$ and $1\le p<\infty$, to the unique global
solution $\boldsymbol V\in\cD(\R_+;\mathbb C_-^d)$ of
\begin{equation}\label{eq:general-Riccati}
    \boldsymbol V=\boldsymbol W*\boldsymbol\Pi,
    \qquad
    \boldsymbol W
       =\boldsymbol f+\frac12(\boldsymbol V+\boldsymbol h)^2.
\end{equation}
The solution $\boldsymbol V$ is continuous if
$\boldsymbol f(0)=\boldsymbol h(0)=\boldsymbol0$, or if
$t\mapsto\boldsymbol\Pi([0,t])$ is continuous on $\R_+$.

\item \emph{Characterization of the limiting processes.}
For the test functions and solution in \textup{(ii)}, the Fourier--Laplace
functional satisfies
\begin{equation}\label{eq:general-FLF}
    \E\!\left[
       \exp\left\{\boldsymbol f*\dd\boldsymbol\Xi(t)
                 +\boldsymbol h*\dd\boldsymbol M(t)\right\}
       \right]
    =\exp\left\{\boldsymbol W*\dd\boldsymbol\Upsilon(t)\right\}.
\end{equation}
Here row vectors multiply matrix-valued kernels from the left, and, for
example,
\[
 \boldsymbol f*\dd\boldsymbol\Xi(t)
 :=\int_{[0,t]}\boldsymbol f(t-s)\dd\boldsymbol\Xi(s).
\]
The limiting processes are extended by zero to negative times, and
these integrals include any initial jump.  For smooth $\boldsymbol h$,
the martingale integral uses the same convention, as expressed by
the integration-by-parts formula in the proof below.
Moreover,
\begin{equation}\label{eq:general-Volterra}
    \boldsymbol\Xi
       =\boldsymbol\Upsilon+\boldsymbol\Pi*\boldsymbol M.
\end{equation}

\item \emph{Moment bounds and stochastic continuity.}
For every $t\ge0$,
\begin{equation}\label{eq:general-limit-moments}
 \E\left[|\boldsymbol\Xi(t)|_1\right]\le|\boldsymbol\Upsilon(t)|_1,
 \qquad
 \E\left[|\boldsymbol M(t)|_1^2\right]\le d|\boldsymbol\Upsilon(t)|_1.
\end{equation}
If $\boldsymbol\Upsilon$ is continuous on $\R_+$, then
$(\boldsymbol\Xi,\boldsymbol M)$ is stochastically continuous.
If it is continuous on $\R$, then also
$\boldsymbol\Xi(0)=\boldsymbol M(0)=\boldsymbol0$.
\end{enumerate}
\end{theorem}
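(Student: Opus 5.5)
The approach is to rerun the argument of \cite[Theorems~2.9, 2.12, 2.13]{Xu2024Scaling} with every occurrence of $\theta_n$ rewritten through \eqref{eq:activity-reparameterization}. The goal is to check that each estimate uses only $b_n=\sqrt{n\theta_n}\to\infty$ and never $\theta_n\to\infty$ itself.

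\emph{Step 1: exponential formula.} For test functions $\boldsymbol f,\boldsymbol h$ as in (ii), the Hawkes cluster (Poisson immigration) representation gives an exact prelimit exponential formula
\[
\E\big[\exp\{\boldsymbol f*\dd\boldsymbol{\mathcal A}^{(n)}+\boldsymbol h*\dd\boldsymbol{\mathcal M}^{(n)}\}\big]=\exp\{\boldsymbol W_n*\dd\mathcal I_{\boldsymbol H^{(n)}}\},
\]
where $\boldsymbol V_n=\boldsymbol W_n*\boldsymbol R^{(n)}$ (up to the $b_n^{-1}$ boundary term) and $\boldsymbol W_n$ is determined by $b_n^2(e^{\cdots/b_n}-1-\cdots/b_n)$-type nonlinearities. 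A Taylor expansion on $\mathbb C_-$ shows
\[
\boldsymbol W_n=\boldsymbol f+\tfrac12(\boldsymbol V_n+\boldsymbol h)^2+O\big(b_n^{-1}|\boldsymbol V_n+\boldsymbol h|^3\big).
\]
Only $b_n$ appears here, so accumulated activity suffices.

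\emph{Step 2: Riccati estimates.} This is the main step. First, $\operatorname{Re}\boldsymbol V_n\le0$ gives uniform bounds $|\boldsymbol V_n|\le C$ on $[0,L]$. This uses positivity of $\boldsymbol R^{(n)}$ together with the tilted bound $\int_0^L\boldsymbol R^{(n)}\le e^{\lambda_0L}\mathcal L_{\boldsymbol R^{(n)}}(\lambda_0)$, which is bounded by Lemma~\ref{lem:positive-resolvent-limit}. Second, Lemma~\ref{lem:positive-resolvent-limit} provides the vague convergence $\boldsymbol R^{(n)}\dd t\to\boldsymbol\Pi$. Third, I would establish equicontinuity in $L^p$ of $\boldsymbol V_n$ via Kolmogorov--Riesz, splitting $\boldsymbol R^{(n)}$ into its mass on $[0,\delta]$ and the remainder. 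This shows that $(\boldsymbol V_n)$ is precompact in $L^p(0,L)$.

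\emph{Step 3: identification of the limit.} Any limit point solves \eqref{eq:general-Riccati}. Uniqueness in $\cD(\R_+;\mathbb C_-^d)$ is the obstacle when $\boldsymbol\Pi$ has an atom. Write $\boldsymbol\Pi=\boldsymbol D\delta_0+\boldsymbol\Pi'$, where $\boldsymbol D=\operatorname{diag}(\pi_i)$ by \eqref{eq:general-diagonal-atom}. The equation then becomes coordinatewise scalar quadratics
\[
v_i=\pi_i\big(f_i+\tfrac12(v_i+h_i)^2\big)+(\text{continuous memory term}).
\]
These are solved by choosing the fixed branch of the square root with nonpositive real part; this is the logarithm branch mentioned in the introduction. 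Local Lipschitz dependence then gives existence and uniqueness by a Gronwall/Volterra argument. Global existence follows from the a priori bound $\operatorname{Re}\boldsymbol V\le0$. Continuity of $\boldsymbol V$ under the stated conditions comes from the representation above.

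\emph{Step 4: process limits.} Tightness in the $S$-topology follows from monotonicity of $\boldsymbol{\mathcal A}^{(n)},\boldsymbol{\mathcal N}^{(n)}$ and the uniform moment bounds
\[
\E|\boldsymbol{\mathcal N}^{(n)}(t)|_1=|\mathcal I_{\boldsymbol H^{(n)}}(t)|_1,\qquad \E|\boldsymbol{\mathcal M}^{(n)}(t)|^2\le d|\mathcal I_{\boldsymbol H^{(n)}}(t)|_1.
\]
The difference $\boldsymbol{\mathcal N}^{(n)}-\boldsymbol{\mathcal A}^{(n)}=\boldsymbol{\mathcal M}^{(n)}/b_n\to0$. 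Passing to the limit in the exponential formula, using $L^p$ convergence of $\boldsymbol W_n$ and \eqref{eq:general-exogenous}, yields \eqref{eq:general-FLF}. Then \eqref{eq:general-Volterra} follows from the prelimit identity $\boldsymbol{\mathcal A}^{(n)}=\mathcal I_{\boldsymbol H^{(n)}}+\boldsymbol R^{(n)}*\boldsymbol{\mathcal M}^{(n)}$. Finally, the moment bounds in (iv) pass to the limit by Fatou, and stochastic continuity follows from continuity of $\boldsymbol\Upsilon$.
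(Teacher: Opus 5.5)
\paragraph{Assessment.} Your Steps 1 and 4 and the uniqueness half of Step 3 agree with the paper. The $1$-Lipschitz dependence of the $\mathbb C_-$ root on the memory term is the paper's observation that $|1-\frac{D_{ii}}{2}(x_i+x_i^*)|\ge1$, combined with an exponential weight on the atomless part of $\boldsymbol\Pi$. There is, however, a genuine gap in Step 2, and it sits exactly where the theorem has content: the case $\boldsymbol\Pi(\{0\})\neq\boldsymbol0$. In the IG application, $\boldsymbol\Pi=\boldsymbol B^{-1}\delta_0$ is a pure atom.

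\paragraph{Why Step 2 fails with an atom.} A Kolmogorov--Riesz bound obtained by splitting $\boldsymbol R^{(n)}$ at $\delta$ must absorb the translation difference $\boldsymbol V^{(n)}(\cdot+h)-\boldsymbol V^{(n)}$. This difference reappears on the right through $\boldsymbol W^{(n)}$, with a Lipschitz factor of order $\|\boldsymbol V^{(n)}+\boldsymbol h\|_\infty$. Absorption works only if the mass $\int_{[0,\delta]}\boldsymbol R^{(n)}$ is small. With an atom, this mass tends to $\boldsymbol\Pi(\{0\})$ for every $\delta>0$. The coefficient is then of order $|\boldsymbol\Pi(\{0\})|\,\|\boldsymbol V+\boldsymbol h\|_\infty$, which exceeds one for large test functions. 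Exponential weights do not help, since they do not shrink an atom at zero. The modulus-one factor that rescues uniqueness needs an exact atom evaluated pointwise, whereas the prelimit kernel only averages over a vanishing window (compare the boundary layer in Remark~\ref{rem:Riccati-boundary-layer}). As proposed, you therefore get neither precompactness nor almost-everywhere convergence of $\boldsymbol V^{(n)}$, hence no limiting solution of \eqref{eq:general-Riccati}.

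\paragraph{Step 3 does not supply existence either.} The algebraic step has a root in $\mathbb C_-$ only under a constraint on the memory term; a nonpositive real part would suffice. The limit equation alone does not guarantee this, because $\operatorname{Re}\boldsymbol W$ may be positive. The bound $\operatorname{Re}\boldsymbol V\le0$ that you invoke for global existence is thus something to prove, not an a priori property, and it would not control $\operatorname{Im}\boldsymbol V$ anyway.

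\paragraph{The missing idea.} This is the paper's probabilistic device; its ``fixed logarithm branch'' is a branch of $\operatorname{Log}$, not of the square root.
\begin{itemize}
\item For each $j$, run an auxiliary Hawkes array with the same kernel and baseline $\varepsilon b_n(\phi_{n,ij})_{i=1}^d$. Its resolvent-adjusted intensity is exactly $\varepsilon\boldsymbol R^{(n)}_{\cdot j}$, with cumulative activity converging to $\varepsilon\boldsymbol\Pi_{\cdot j}([0,t])$.
\item The exact affine formula identifies $\exp\{\varepsilon V^{(n)}_j(t)\}$ with a Fourier--Laplace functional of this $S$-tight array. That functional converges along subsequences outside a countable set.
\item Use the uniform bound of Lemma~\ref{lem:uniform-Riccati-bound} and choose $\varepsilon C_{L+1}<\pi/4$. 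This keeps these values in a sector where the principal logarithm is continuous, which yields almost-everywhere convergence of $\boldsymbol V^{(n)}$.
\item Proposition~\ref{prop:local-Riccati-repair} then gives $L^p$ convergence and $\boldsymbol V=\boldsymbol W*\boldsymbol\Pi$. Your uniqueness argument promotes this to the full sequence.
\end{itemize}
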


Equivalently, in the original parameterization the condition in Theorem~\ref{thm:relaxed-Xu} is
\begin{equation}\label{eq:relaxed-theta}
    \theta_n>0
    \quad \mbox{and}\quad  
    n\theta_n\to\infty,
\end{equation}
in place of $\theta_n\to\infty$.  The normalizations and simplicity
of the coordinate counts give
\begin{equation}\label{eq:count-compensator-difference}
 \boldsymbol{\mathcal N}^{(n)}-\boldsymbol{\mathcal A}^{(n)}
       =\frac{\boldsymbol{\mathcal M}^{(n)}}{b_n}
       \quad \mbox{and}\quad 
 \sup_{0<t\le L}\left|\Delta\mathcal M_i^{(n)}(t)\right|
       \le b_n^{-1}\to 0,\quad i=1,\ldots,d.
\end{equation}
The remainder in \eqref{eq:Riccati-Taylor} vanishes under the same
condition.  The proof uses the observation scale only through
\eqref{eq:general-rescaling} and
\eqref{eq:general-mass}--\eqref{eq:general-exogenous}; it applies to
any sequence of real observation scales tending to infinity.
We therefore use $n=T$ in the IG specialization below.

\subsection{Local convergence in the Riccati identification}

We combine the stochastic compactness argument of
\cite[Lemma~5.4]{Xu2024Scaling} with positive-resolvent bounds,
local convolution limits, and a fixed logarithm branch.  These steps
establish the Riccati existence and transform identification in
Theorem~\ref{thm:relaxed-Xu}, including the case of an atom at zero.

For a vector $x\in\mathbb C^d$, let
$|x|_1:=\sum_{i=1}^d|x_i|$, and use the associated integral norms.
For a nonnegative, locally finite matrix-valued measure $K$ on
$[0,\infty)$, let
\[
 m_K(L):=\sum_{i=1}^d\sum_{j=1}^dK_{ij}([0,L]),
 \qquad
 (g*K)(t):=\int_{[0,t]}g(t-s)K(\dd s).
\]
Functions are extended by zero to negative times.  Tonelli's theorem
gives the local convolution inequality
\begin{align}\label{eq:local-measure-Young}
 \|g*K\|_{L^1(0,L)}
 &\le\sum_{i=1}^d\sum_{j=1}^d
       \int_{[0,L]}\left(\int_s^L|g_i(t-s)|\dd t\right)K_{ij}(\dd s)
       \notag\\
 &\le m_K(L)\|g\|_{L^1(0,L)}.
\end{align}
The corresponding inequality for a nonnegative column-vector measure
uses the sum of its coordinate masses.

The next lemma permits a limiting atom at zero and controls convolution
with a fixed, locally integrable function.

\begin{lemma} 
\label{lem:local-convolution-vague}
Let $K_n$ and $K$ be nonnegative, locally finite matrix-valued measures on
$[0,\infty)$ such that $K_n\to K$ entrywise vaguely.  For every
row-vector function $g\in L^1_{\mathrm{loc}}(\R_+;\mathbb C^d)$ and
every $L>0$,
\begin{equation}\label{eq:local-fixed-convolution-limit}
    \|g*K_n-g*K\|_{L^1(0,L)}\longrightarrow0.
\end{equation}
The same assertion holds for column-vector measures.
\end{lemma}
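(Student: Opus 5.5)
The plan is a density argument that reduces the claim to continuous compactly supported test functions, where vague convergence applies directly. Work entrywise: $(g*K)_j=\sum_i g_i*K_{ij}$. It therefore suffices to treat a scalar function $g\in L^1(0,L)$ and scalar nonnegative measures $K_n\to K$ vaguely on $[0,\infty)$. Only the restriction of $g$ to $[0,L]$ enters the values of $g*K_n$ on $[0,L]$.

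First I would record uniform mass bounds. Choose $L'>L$ with $K(\{L'\})=0$. By vague convergence, $K_n([0,L'])\to K([0,L'])$, so $\sup_n m_{K_n}(L)\le C<\infty$ for large $n$. The same bound holds for $K$.

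Given $\varepsilon>0$, pick $\tilde g\in C_c([0,L))$, continuous on $\R_+$, with $\|g-\tilde g\|_{L^1(0,L)}<\varepsilon$. By the local convolution inequality \eqref{eq:local-measure-Young},
\[
\|g*K_n-g*K\|_{L^1(0,L)}\le C\varepsilon+m_K(L)\varepsilon+\|\tilde g*K_n-\tilde g*K\|_{L^1(0,L)}.
\]
It remains to show that the last term vanishes.

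For fixed $t\in[0,L]$, the function $s\mapsto\tilde g(t-s)\1_{\{s\le t\}}$ is continuous on $\R_+$, because $\tilde g$ extended by zero to negative times is continuous. This relies on $\tilde g$ vanishing at $0$. To be safe, also require $\tilde g(0)=0$, i.e.\ $\tilde g\in C_c((0,L))$, which is still dense in $L^1$. The function is supported in $[0,L]$.

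The main subtlety is the possible atom of $K$ at $0$, or at any other point. Vague convergence on $[0,\infty)$, with $0$ included in the space, gives $\int f\,dK_n\to\int f\,dK$ for continuous compactly supported $f$ on $[0,\infty)$, and $f$ need not vanish at $0$. So $\tilde g*K_n(t)\to\tilde g*K(t)$ pointwise for every $t\in[0,L]$, and atoms cause no trouble because the integrand is continuous everywhere.

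These convergences are also uniformly bounded: $|\tilde g*K_n(t)|\le\|\tilde g\|_\infty C$. Dominated convergence on $[0,L]$ then gives convergence in $L^1(0,L)$. One could even get uniform convergence from equicontinuity, via uniform continuity of $\tilde g$ and the mass bound, but this is not needed.

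Letting $n\to\infty$ and then $\varepsilon\to0$ proves \eqref{eq:local-fixed-convolution-limit}. The column-vector case is identical, since the bound uses the sum of coordinate masses.

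The main point to check carefully is that the density approximation uses a $\tilde g$ that is continuous at the origin after extension by zero. Without this, an atom of $K$ at a discontinuity point would break the pointwise convergence. Choosing $\tilde g\in C_c((0,L))$ removes the issue.
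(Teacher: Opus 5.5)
Your proof is correct and follows essentially the same route as the paper's. Both arguments use a uniform local mass bound from vague convergence, approximate $g$ in $L^1(0,L)$ by a function in $C_c((0,L))$ so that $s\mapsto \tilde g(t-s)$ is continuous on $[0,\infty)$ even across atoms, apply pointwise convergence with dominated convergence to the continuous part, and control the remainder by the local convolution inequality; the only cosmetic difference is that you obtain the mass bound from a continuity point $L'>L$ of $K$ rather than from a continuous compactly supported majorant of $\1_{[0,L]}$.
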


\begin{proof}
Fix $L>0$.  A continuous compactly supported majorant of
$\1_{[0,L]}$, positivity, and vague convergence give
\[
    C_L:=\sup_n m_{K_n}(L)+m_K(L)<\infty.
\]
Choose $q\in C_c((0,L);\mathbb C^d)$ with
$\|g-q\|_{L^1(0,L)}<\varepsilon$, and extend $q$ by zero to $\R$.
For every $t\in[0,L]$, the function $s\mapsto q(t-s)$ is continuous
and compactly supported on $[0,\infty)$.  Thus
$(q*K_n)(t)\to(q*K)(t)$, while
\[
    \left|(q*K_n)(t)-(q*K)(t)\right|_1
       \le C_L\|q\|_{L^\infty(\R)}.
\]
The dominated convergence theorem gives convergence in $L^1(0,L)$
for this difference.  Applying \eqref{eq:local-measure-Young} to $g-q$
then yields
\[
 \limsup_{n\to\infty}\left\|g*K_n-g*K\right\|_{L^1(0,L)}
       \le C_L\|g-q\|_{L^1(0,L)}
       \le C_L\varepsilon.
\]
Letting $\varepsilon\downarrow0$ proves
\eqref{eq:local-fixed-convolution-limit}.  For column-vector measures,
the proof uses the same scalar convolution inequality.
\end{proof}

The positive tilted resolvent gives a uniform bound for the prelimit
Riccati solutions.  This bound will both control the Taylor remainder
and keep the auxiliary transforms in a fixed logarithm branch.

\begin{lemma} \label{lem:uniform-Riccati-bound}
Under the hypotheses of Lemma~\ref{lem:positive-resolvent-limit}, let
$\boldsymbol V^{(n)}$ be the prelimit $\mathbb C_-^d$-valued solution
in \eqref{eq:local-prelimit-Riccati}, for locally bounded test functions
$\boldsymbol f$ and $\boldsymbol h$ with values in $\mathbb C_-^d$ and
$\mathrm i\R^d$, respectively.  For every $L>0$,
\begin{equation}\label{eq:uniform-Riccati-bound}
 \sup_{n\ge n_0}
   \left\|\boldsymbol V^{(n)}\right\|_{L^\infty(0,L)}
 \le C_L\left(\|\boldsymbol f\|_{L^\infty(0,L)}
                    +\|\boldsymbol h\|_{L^\infty(0,L)}^2\right)
\end{equation}
for some $n_0$ and $C_L$ independent of the test functions.
\end{lemma}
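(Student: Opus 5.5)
The plan is to bound $\boldsymbol V^{(n)}$ through the probabilistic meaning of the prelimit Riccati equation \eqref{eq:local-prelimit-Riccati}, rather than through the quadratic limit equation \eqref{eq:general-Riccati}. The quadratic equation alone gives no uniform $L^\infty$ control, because $\tfrac12(\boldsymbol V+\boldsymbol h)^2$ could a priori drive blow-up. The only quantitative inputs will be two facts from Lemma~\ref{lem:positive-resolvent-limit}: the kernel $\boldsymbol R^{(n)}$ is nonnegative, and $\boldsymbol R^{(n)}(t)\dd t\to\boldsymbol\Pi$ vaguely. Arguing as in the proof of Lemma~\ref{lem:local-convolution-vague}, these give
\[
 M_L:=\sup_{n\ge n_0}m_{\boldsymbol R^{(n)}}(L)<\infty,
\]
and this constant will play the role of $C_L$.

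\textbf{Step 1: real part.} I would first use that $\boldsymbol V^{(n)}$ is, coordinatewise, $b_n$ times the logarithm of a cluster functional of the form
\[
 \E\left[\exp\left\{\boldsymbol f*\dd\boldsymbol N_n/b_n^2+\boldsymbol h*\dd\widetilde{\boldsymbol N}_n/b_n\right\}\right],
\]
started from one ancestor of the given type. Since $\boldsymbol h$ is purely imaginary, the martingale term, including its compensator part, has modulus one. This gives $\operatorname{Re}\boldsymbol V^{(n)}\le0$, which is the $\mathbb C_-$ property already assumed. It also gives the comparison
\[
 |\text{functional}|\le\E\left[e^{\operatorname{Re}\boldsymbol f*\dd\boldsymbol N_n/b_n^2}\right].
\]

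\textbf{Step 2: reduction to real $\boldsymbol f$, $\boldsymbol h=\boldsymbol0$.} For real $\boldsymbol f\le0$ and $\boldsymbol h=\boldsymbol0$, the exponential nonlinearity $b_n^2(e^{x/b_n}-1)-b_nx$ is nonnegative. Hence $-\boldsymbol V^{(n)}\ge0$ is a subsolution of the linear renewal equation driven by $-\boldsymbol f$ with kernel $\boldsymbol R^{(n)}$. Positivity of the kernel then yields
\[
 \|\boldsymbol V^{(n)}\|_{L^\infty(0,L)}\le(1+M_L)\|\boldsymbol f\|_{L^\infty(0,L)}.
\]

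\textbf{Step 3: the imaginary part and general test functions.} I would write $e^{w}-1=w+q(w)$ with $|q(w)|\le|w|^2/2$ for $\operatorname{Re}w\le0$. The prelimit equation then becomes
\[
 \boldsymbol V^{(n)}=\left(\boldsymbol f+\tfrac12(\boldsymbol V^{(n)}+\boldsymbol h)^2+\boldsymbol\epsilon_n\right)*\boldsymbol R^{(n)},
 \qquad |\boldsymbol\epsilon_n|\lesssim|\boldsymbol V^{(n)}+\boldsymbol h|^3/b_n.
\]
The key point is the sign of the real part of the quadratic term: it equals $\tfrac12(\operatorname{Re}\boldsymbol V^{(n)})^2-\tfrac12(\operatorname{Im}\boldsymbol V^{(n)}+\operatorname{Im}\boldsymbol h)^2$. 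The nonpositive contribution only helps in bounding $-\operatorname{Re}\boldsymbol V^{(n)}$. I therefore expect $-\operatorname{Re}\boldsymbol V^{(n)}\lesssim M_L(\|\boldsymbol f\|_\infty+\|\boldsymbol h\|_\infty^2)$. For the imaginary part, I would use the modulus bound $|1+\boldsymbol V^{(n)}/b_n|\le1$, which holds because $1+\boldsymbol V^{(n)}/b_n$ is a characteristic-type functional. I would then aim to close the estimate by a bootstrap on the first time $\|\boldsymbol V^{(n)}\|_{L^\infty(0,t)}$ exceeds $2C(\|\boldsymbol f\|+\|\boldsymbol h\|^2)$. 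On that interval the cubic remainder is $O(b_n^{-1})$ and is absorbed once $n\ge n_0$. Combined with the Gronwall inequality for convolution against a nonnegative measure of mass $\le M_L$, this gives \eqref{eq:uniform-Riccati-bound}.

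\textbf{Main obstacle.} The step I expect to be hardest is the imaginary part. The term $-\tfrac12(\operatorname{Im}\boldsymbol V^{(n)})^2$ has a favourable sign only for the real part, so a naive Gronwall argument on $|\boldsymbol V^{(n)}|$ is genuinely quadratic and does not close uniformly in $L$. If the bootstrap fails, the fallback is an exact second-moment computation. Since
\[
 \E\left|\boldsymbol h*\dd\widetilde{\boldsymbol N}_n/b_n\right|^2\le\|\boldsymbol h\|_\infty^2\,\E[\text{cluster size}]/b_n^2,
 \qquad
 \E[\text{cluster size}]\lesssim b_n\,m_{\boldsymbol R^{(n)}}(L),
\]
and since $|\log\E e^{\mathrm iX}|$ is bounded by the variance of $X$ whenever that variance is small, this would bound $\operatorname{Im}\boldsymbol V^{(n)}$ linearly in $M_L\|\boldsymbol h\|_\infty^2$ directly. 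That produces the required form of the bound.
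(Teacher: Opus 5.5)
Your plan has a genuine gap in Step~3, and that step carries the whole content of the lemma. In the resolvent form $\boldsymbol V^{(n)}=\boldsymbol W^{(n)}*\boldsymbol R^{(n)}$ the linear part of $e^z-1$ has already been removed. The remaining nonlinearity $b_n^2\mathcal W\bigl((\boldsymbol V^{(n)}+\boldsymbol h)/b_n\bigr)\approx\frac12(\boldsymbol V^{(n)}+\boldsymbol h)^2$ is therefore genuinely quadratic, and no sign structure rescues it. Indeed, $-\operatorname{Re}\boldsymbol V^{(n)}$ is driven by $+\frac12(\operatorname{Im}\boldsymbol V^{(n)}+\operatorname{Im}\boldsymbol h)^2$, so your ``expected'' bound on the real part already presupposes the bound on the imaginary part.

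The bootstrap closes only when $M_L\bigl(\|\boldsymbol f\|+\frac12(2C\Lambda+\|\boldsymbol h\|)^2\bigr)<2C\Lambda$, where $\Lambda:=\|\boldsymbol f\|+\|\boldsymbol h\|^2$. That holds only for small data, so it cannot give a bound linear in $\Lambda$ with $C_L$ independent of the test functions. The inequality $|1+\boldsymbol V^{(n)}/b_n|\le1$ is unsupported: the cluster functional is $e^{\boldsymbol V^{(n)}/b_n}$, and $\operatorname{Re}w\le0$ does not imply $|1+w|\le1$. Even if it held, it would only give $|\boldsymbol V^{(n)}|\le 2b_n$.

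Your second-moment fallback does give $|\E e^{Z}-1|=O(M_L\Lambda/b_n)$. To turn this into a bound on $b_n\log\E e^{Z}$, and to fix the branch, you need $M_L\Lambda/b_n$ small. So your $n_0$ depends on $\|\boldsymbol f\|$ and $\|\boldsymbol h\|$, which the statement explicitly excludes.

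The missing idea is to go back from the resolvent to the kernel. Set $\kappa_n(t):=n\boldsymbol\phi_n(nt)/b_n$. Since $\boldsymbol R^{(n)}=\kappa_n+(b_n\kappa_n)*\boldsymbol R^{(n)}$, your equation is equivalent to
\[
 \boldsymbol V^{(n)}=\bigl(\boldsymbol W^{(n)}+b_n\boldsymbol V^{(n)}\bigr)*\kappa_n
 =\Bigl[\boldsymbol f+b_n^2\bigl(e^{\boldsymbol h/b_n}-\boldsymbol 1-\boldsymbol h/b_n\bigr)+b_n^2e^{\boldsymbol h/b_n}\bigl(e^{\boldsymbol V^{(n)}/b_n}-\boldsymbol 1\bigr)\Bigr]*\kappa_n .
\]
The inequality $|e^z-1|\le|z|$ on $\operatorname{Re}z\le0$ bounds the last term by $b_n|\boldsymbol V^{(n)}|$ for all sizes of $\boldsymbol V^{(n)}$, with no smallness needed, and this factor $b_n$ cancels the $1/b_n$ in $\kappa_n$.

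Now set $v_{n,i}:=\sup_{t\le L}e^{-\lambda_0t}|V^{(n)}_i(t)|$ and $a_i:=\|f_i\|_\infty+\frac12\|h_i\|_\infty^2$. Positivity of the kernel gives the linear row-vector inequality $v_n\le\frac{a}{b_n}A_n(\lambda_0)+v_nA_n(\lambda_0)$, where $A_n(\lambda_0)=\mathcal L_{\boldsymbol\phi_n}(\lambda_0/n)$. Multiplying by the nonnegative inverse $(\boldsymbol I-A_n(\lambda_0))^{-1}$ yields $v_n\le a\,\mathcal L_{\boldsymbol R^{(n)}}(\lambda_0)$, which is bounded by Lemma~\ref{lem:positive-resolvent-limit}.

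This is where the tilted-stability condition \eqref{eq:general-tilted-stability} enters; your plan never uses it and relies only on $M_L$. It is exactly what makes both $n_0$ and $C_L=e^{\lambda_0L}\sup_{n\ge n_0}\|\mathcal L_{\boldsymbol R^{(n)}}(\lambda_0)\|$ independent of the test functions. Your Step~2 is correct, but it becomes a special case of this estimate.
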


\begin{proof}
Let $\kappa_n(t):=n\boldsymbol\phi_n(nt)/b_n$.
The prelimit equation, equivalently
\cite[equations~(4.6)--(4.7)]{Xu2024Scaling}, is
\[
 \boldsymbol V^{(n)}
 =\left[\boldsymbol f
        +b_n^2\left(e^{\boldsymbol h/b_n}-\boldsymbol1
                                  -\boldsymbol h/b_n\right)
        +b_n^2 e^{\boldsymbol h/b_n}
                    \left(e^{\boldsymbol V^{(n)}/b_n}-\boldsymbol1\right)
   \right]*\kappa_n.
\]
For $\operatorname{Re}z\le0$, the integral formulas for the exponential
give $|e^z-1|\le|z|$ and $|e^z-1-z|\le|z|^2/2$.
Let
\[
 a_i:=\|f_i\|_{L^\infty(0,L)}
              +\frac12\|h_i\|_{L^\infty(0,L)}^2,
 \qquad
 v_{n,i}:=\sup_{0\le t\le L}e^{-\lambda_0t}\left|V_i^{(n)}(t)\right|.
\]
Since $|e^{h_i/b_n}|=1$, positivity of the kernel yields the
row-vector inequality
\[
 v_n\le\frac{a}{b_n}A_n(\lambda_0)+v_n A_n(\lambda_0),
 \qquad A_n(\lambda_0)=\mathcal L_{\boldsymbol\phi_n}(\lambda_0/n).
\]
Multiplying by the nonnegative inverse furnished by
\eqref{eq:general-tilted-stability} gives
\[
 v_n\le \frac{a}{b_n}A_n(\lambda_0)
                       (\boldsymbol I-A_n(\lambda_0))^{-1}
      =a\mathcal L_{\boldsymbol R^{(n)}}(\lambda_0).
\]
The transforms on the right are bounded by
\eqref{eq:positive-resolvent-transform}.  Multiplication by
$e^{\lambda_0L}$ proves \eqref{eq:uniform-Riccati-bound}.
\end{proof}

Almost-everywhere convergence together with
Lemma~\ref{lem:uniform-Riccati-bound} gives convergence in every finite
local $L^p$ norm.  The next proposition provides the two convolution
passages used to identify the limiting equation and transform.

\begin{proposition} 
\label{prop:local-Riccati-repair}
Let $b_n>0$ tend to infinity, and assume
\eqref{eq:relaxed-resolvent-convergence}.  For row-vector functions
$\boldsymbol f\in L^\infty_{\mathrm{loc}}(\R_+;\mathbb C_-^d)$ and
$\boldsymbol h\in L^\infty_{\mathrm{loc}}(\R_+;\mathrm i\R^d)$, suppose
\begin{equation}\label{eq:local-prelimit-Riccati}
 \boldsymbol V^{(n)}
       =\boldsymbol W^{(n)}*\boldsymbol R^{(n)},\qquad
 \boldsymbol W^{(n)}
       =\boldsymbol f+b_n^2\mathcal W
           \left(\frac{\boldsymbol V^{(n)}+\boldsymbol h}{b_n}\right),
 \qquad
 \mathcal W(z):=(e^{z_i}-1-z_i)_{i=1}^d,
\end{equation}
where $\boldsymbol V^{(n)}$ takes values in $\mathbb C_-^d$,
$\sup_n\left\|\boldsymbol V^{(n)}\right\|_{L^\infty(0,L)}<\infty$ for every $L>0$,
and $\boldsymbol V^{(n)}\to\boldsymbol V$ almost everywhere.
Let $\boldsymbol W:=\boldsymbol f+(\boldsymbol V+\boldsymbol h)^2/2$.
Then, for every $L>0$ and $1\le p<\infty$,
\begin{equation}\label{eq:local-Riccati-Lp}
 \left\|\boldsymbol V^{(n)}-\boldsymbol V\right\|_{L^p(0,L)}
 +\left\|\boldsymbol W^{(n)}-\boldsymbol W\right\|_{L^p(0,L)}
 \to 0,
\end{equation}
and $\boldsymbol V=\boldsymbol W*\boldsymbol\Pi$ almost everywhere.
Moreover, if the nonnegative column-vector measures
$\nu_n(\dd s):=\boldsymbol H^{(n)}(s)\dd s$ converge vaguely to
$\nu(\dd s):=\dd\boldsymbol\Upsilon(s)$, then
\begin{equation}\label{eq:local-FLF-L1}
 \left\|\boldsymbol W^{(n)}*\nu_n-\boldsymbol W*\nu\right\|_{L^1(0,L)}
 \to 0.
\end{equation}
\end{proposition}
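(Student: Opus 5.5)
The argument splits into three parts: local $L^p$ convergence of $\boldsymbol V^{(n)}$ and $\boldsymbol W^{(n)}$, identification of the limit equation $\boldsymbol V=\boldsymbol W*\boldsymbol\Pi$, and the exogenous convolution limit \eqref{eq:local-FLF-L1}. The main tool is one splitting of a convolution against moving measures.

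\emph{Step 1: $L^p$ convergence.} Fix $L>0$ and set $C:=\sup_n\|\boldsymbol V^{(n)}\|_{L^\infty(0,L)}+\|\boldsymbol h\|_{L^\infty(0,L)}$. Since $\boldsymbol V^{(n)}\to\boldsymbol V$ almost everywhere with a uniform bound on $(0,L)$, dominated convergence gives $\|\boldsymbol V^{(n)}-\boldsymbol V\|_{L^p(0,L)}\to0$ for every $p<\infty$. For $\boldsymbol W^{(n)}$, use the Taylor estimate $|e^z-1-z-z^2/2|\le |z|^3e^{|z|}/6$. With $z=(V_i^{(n)}+h_i)/b_n$, this gives
\[
\Bigl|b_n^2\mathcal W\bigl((\boldsymbol V^{(n)}+\boldsymbol h)/b_n\bigr)_i-\tfrac12(V^{(n)}_i+h_i)^2\Bigr|\le \frac{C^3e^{C}}{6b_n}\to0
\]
uniformly on $(0,L)$. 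Moreover $(\boldsymbol V^{(n)}+\boldsymbol h)^2\to(\boldsymbol V+\boldsymbol h)^2$ almost everywhere and stays bounded. Hence $\boldsymbol W^{(n)}\to\boldsymbol W$ almost everywhere, with $\sup_n\|\boldsymbol W^{(n)}\|_{L^\infty(0,L)}<\infty$, and dominated convergence again yields \eqref{eq:local-Riccati-Lp}.

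\emph{Step 2: the limit equation.} Split
\[
\boldsymbol W^{(n)}*\boldsymbol R^{(n)}-\boldsymbol W*\boldsymbol\Pi=(\boldsymbol W^{(n)}-\boldsymbol W)*\boldsymbol R^{(n)}+\bigl(\boldsymbol W*\boldsymbol R^{(n)}-\boldsymbol W*\boldsymbol\Pi\bigr).
\]
For the first term, apply the local convolution inequality \eqref{eq:local-measure-Young} with $K=\boldsymbol R^{(n)}(t)\dd t$. Vague convergence \eqref{eq:relaxed-resolvent-convergence} and positivity give $\sup_n m_{\boldsymbol R^{(n)}}(L)<\infty$, as in the proof of Lemma~\ref{lem:local-convolution-vague}. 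So this term is bounded in $L^1(0,L)$ by a constant times $\|\boldsymbol W^{(n)}-\boldsymbol W\|_{L^1(0,L)}\to0$. The second term vanishes in $L^1(0,L)$ by Lemma~\ref{lem:local-convolution-vague}, since $\boldsymbol W\in L^\infty_{\mathrm{loc}}\subset L^1_{\mathrm{loc}}$.

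It follows that $\boldsymbol V^{(n)}=\boldsymbol W^{(n)}*\boldsymbol R^{(n)}\to\boldsymbol W*\boldsymbol\Pi$ in $L^1(0,L)$. Step 1 also gives $\boldsymbol V^{(n)}\to\boldsymbol V$ in $L^1(0,L)$, so $\boldsymbol V=\boldsymbol W*\boldsymbol\Pi$ almost everywhere on $(0,L)$. Since $L$ is arbitrary, this holds almost everywhere on $\R_+$.

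\emph{Step 3: the exogenous convolution \eqref{eq:local-FLF-L1}.} Use the same splitting with the column-vector measures $\nu_n$ in place of $\boldsymbol R^{(n)}$:
\[
\boldsymbol W^{(n)}*\nu_n-\boldsymbol W*\nu=(\boldsymbol W^{(n)}-\boldsymbol W)*\nu_n+\bigl(\boldsymbol W*\nu_n-\boldsymbol W*\nu\bigr).
\]
Vague convergence bounds $\sup_n\nu_n([0,L])$. The column-vector version of \eqref{eq:local-measure-Young} then controls the first term by $\|\boldsymbol W^{(n)}-\boldsymbol W\|_{L^1(0,L)}$. The column-vector statement of Lemma~\ref{lem:local-convolution-vague} handles the second term.

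\emph{Main obstacle.} The only delicate point is possible atoms of $\boldsymbol\Pi$ or $\nu$ at $0$ (and elsewhere). Vague convergence does not give pointwise convergence of $g*K_n$ for discontinuous $g$, so the argument has to stay in $L^1(0,L)$. Lemma~\ref{lem:local-convolution-vague} is what makes this possible: it approximates $g$ by a compactly supported continuous $q$, for which pointwise convergence does hold, and then passes to $L^1$ convergence by dominated convergence. Because $\boldsymbol W$ is merely $L^\infty_{\mathrm{loc}}$ and not continuous, everything must be kept in integral norms. The uniform local mass bounds for $\boldsymbol R^{(n)}$ and $\nu_n$, obtained from a continuous compactly supported majorant of $\1_{[0,L]}$, are what allow the moving-function term to be controlled.
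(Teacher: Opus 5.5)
Your proposal is correct and follows essentially the same route as the paper. You use dominated convergence for $\boldsymbol V^{(n)}$, a cubic Taylor remainder of order $b_n^{-1}$ for $\boldsymbol W^{(n)}$, and the same two-term splitting, controlled by \eqref{eq:local-measure-Young} and Lemma~\ref{lem:local-convolution-vague}, for both $\boldsymbol R^{(n)}$ and $\nu_n$. The differences are cosmetic: your remainder bound carries the factor $e^{|z|}$ instead of exploiting $\operatorname{Re}z\le0$, which is harmless since $|z|\le C/b_n$, and you apply dominated convergence where the paper factors the difference of squares.
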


\begin{proof}
The dominated convergence theorem and the uniform local bound give the
first term of \eqref{eq:local-Riccati-Lp}.  The integral remainder in
Taylor's formula is
\[
 e^z-1-z-\frac{z^2}{2}
      =\frac{z^3}{2}\int_0^1(1-s)^2e^{sz}\dd s.
\]
Since $\operatorname{Re}\left(\boldsymbol V^{(n)}+\boldsymbol h\right)\le0$
coordinatewise, this formula implies
\begin{equation}\label{eq:Riccati-Taylor}
\begin{gathered}
 E_n:=b_n^2\mathcal W
          \left(\frac{\boldsymbol V^{(n)}+\boldsymbol h}{b_n}\right)
             -\frac12\left(\boldsymbol V^{(n)}+\boldsymbol h\right)^2,\\
 \|E_n\|_{L^\infty(0,L)}
   \le \frac{\|\boldsymbol V^{(n)}+\boldsymbol h\|_{L^\infty(0,L)}^3}
               {6b_n}
   \le\frac{C_L}{b_n}.
\end{gathered}
\end{equation}
Factoring the difference of the componentwise squares gives
\[
 \left\|\boldsymbol W^{(n)}-\boldsymbol W\right\|_{L^p(0,L)}
 \le L^{1/p}\|E_n\|_{L^\infty(0,L)}
        +C_L\left\|\boldsymbol V^{(n)}-\boldsymbol V\right\|_{L^p(0,L)} \to 0.
\]
Let $K_n(\dd s):=\boldsymbol R^{(n)}(s)\dd s$.
Its masses $m_{K_n}(L)$ are uniformly bounded by vague convergence and
positivity.  The identity in \eqref{eq:local-prelimit-Riccati},
\eqref{eq:local-measure-Young}, and
Lemma~\ref{lem:local-convolution-vague} yield
\begin{align}\label{eq:local-Riccati-L1-passage}
 \left\|\boldsymbol V^{(n)}-\boldsymbol W*\boldsymbol\Pi\right\|_{L^1(0,L)}
 &\le m_{K_n}(L)
          \left\|\boldsymbol W^{(n)}-\boldsymbol W\right\|_{L^1(0,L)}
       +\left\|\boldsymbol W*K_n-\boldsymbol W*\boldsymbol\Pi\right\|_{L^1(0,L)}
       \notag \to 0.
\end{align}
Combining this with \eqref{eq:local-Riccati-Lp} proves the limiting
Riccati equation almost everywhere.

Similarly, let
$m_{\nu_n}(L):=\sum_{i=1}^d\nu_{n,i}([0,L])$.
These masses are uniformly bounded, and
\begin{equation*}
    \left\|\boldsymbol W^{(n)}*\nu_n-\boldsymbol W*\nu\right\|_{L^1(0,L)}
   \le m_{\nu_n}(L)
         \left\|\boldsymbol W^{(n)}-\boldsymbol W\right\|_{L^1(0,L)}
       +\left\|\boldsymbol W*\nu_n-\boldsymbol W*\nu\right\|_{L^1(0,L)}
       \to 0
\end{equation*}  
by \eqref{eq:local-Riccati-Lp} and
Lemma~\ref{lem:local-convolution-vague}.  This proves
\eqref{eq:local-FLF-L1}.
\end{proof}

The uniqueness argument in \cite[Lemma~4.4]{Xu2024Scaling} also applies
to the bounded almost-everywhere solution class just obtained.
Indeed, let $D:=\boldsymbol\Pi(\{0\})$, and for two
$\mathbb C_-^d$-valued solutions let
$x:=\boldsymbol V+\boldsymbol h$,
$x^*:=\boldsymbol V^*+\boldsymbol h$, and
$\Delta:=\boldsymbol V-\boldsymbol V^*$.
The difference equation and the diagonal-atom condition give
\[
 \Delta_i(t)\left(1-\frac{D_{ii}}2(x_i(t)+x_i^*(t))\right)
 =\frac12\sum_{j=1}^d\int_{(0,t]}
       \left(x_j^2-(x_j^*)^2\right)(t-s)\Pi_{ji}(\dd s).
\]
The factor multiplying $\Delta_i$ has modulus at least one because
$\operatorname{Re}x_i,\operatorname{Re}x_i^*\le0$.
Local boundedness, factorization of the squares, and
\eqref{eq:local-measure-Young} consequently imply
\[
 \|e^{-\lambda\cdot}\Delta\|_{L^1(0,L)}
 \le C_L\left(\sum_{i=1}^d\sum_{j=1}^d
        \int_{(0,L]}e^{-\lambda s}\Pi_{ji}(\dd s)\right)
        \left\|e^{-\lambda\cdot}\Delta\right\|_{L^1(0,L)}.
\]
The coefficient tends to zero as $\lambda\to\infty$, so choosing it
less than one proves uniqueness almost everywhere.
For c\`adl\`ag representatives, this also proves equality at every time.

We now prove Theorem~\ref{thm:relaxed-Xu}.  The moment bounds give
stochastic compactness, and an auxiliary immigration rate of size
$\varepsilon$ fixes the logarithm branch needed to identify the
Riccati limit.

\begin{proof}[Proof of Theorem~\ref{thm:relaxed-Xu}]
Let $J_n(t):=\int_0^t\boldsymbol H^{(n)}(s)\dd s$.
The mean-intensity identity and Doob's $L^2$ maximal inequality give
\begin{equation}\label{eq:general-compactness-moments}
 \E\left[\boldsymbol{\mathcal A}^{(n)}(t)\right]
   =\E\left[\boldsymbol{\mathcal N}^{(n)}(t)\right]=J_n(t),
 \qquad
 \E\!\left[\sup_{0\le s\le L}
               \left|\boldsymbol{\mathcal M}^{(n)}(s)\right|_1^2\right]
       \le4d|J_n(L)|_1.
\end{equation}
The convergence in \eqref{eq:general-exogenous} and monotonicity bound
$J_n(L)$ uniformly, by first choosing a continuity point of
$\boldsymbol\Upsilon$ larger than $L$.
Thus the uniform-tightness and $S$-compactness arguments in
\cite[Proposition~5.3 and Lemma~5.4]{Xu2024Scaling} apply.
The counts and compensators have the same limit, since
\eqref{eq:count-compensator-difference} and
\eqref{eq:general-compactness-moments} imply
\[
 \E\!\left[\sup_{0\le s\le L}
    \left|\boldsymbol{\mathcal N}^{(n)}(s)
           -\boldsymbol{\mathcal A}^{(n)}(s)\right|_1^2\right]
       \le\frac{4d|J_n(L)|_1}{b_n^2}\to 0.
\]

For completeness, finite-dimensional identification at positive
continuity points of $\boldsymbol\Upsilon$ follows from time averages.
For $0\le s\le t$, the compensator mean and martingale isometry give
\begin{equation}\label{eq:general-mean-increments}
 \E\!\left[\left|\boldsymbol{\mathcal A}^{(n)}(t)
                      -\boldsymbol{\mathcal A}^{(n)}(s)\right|_1\right]
       =|J_n(t)-J_n(s)|_1,
 \qquad
 \E\!\left[\left|\boldsymbol{\mathcal M}^{(n)}(t)
                      -\boldsymbol{\mathcal M}^{(n)}(s)\right|_1^2\right]
       \le d|J_n(t)-J_n(s)|_1.
\end{equation}
For either process $Z^{(n)}$, let
$Z^{(n),\delta}(t):=\delta^{-1}\int_t^{t+\delta}Z^{(n)}(s)\dd s$.
Monotonicity for the compensator, and Jensen's inequality for the
martingale, bound their differences from these averages by
$|J_n(t+\delta)-J_n(t)|_1$ and $d|J_n(t+\delta)-J_n(t)|_1$,
in first and second mean, respectively.
The averages converge jointly along an $S$-convergent subsequence
\cite[Proposition~C.3(4)]{Xu2024Scaling}.  Choose $t+\delta$ to be a
continuity point of $\boldsymbol\Upsilon$, pass to the limit in $n$,
and then let $\delta\downarrow0$.  The limiting time averages converge
to the path values by right-continuity.  This proves joint
finite-dimensional convergence at every positive continuity point
$t$ of $\boldsymbol\Upsilon$.
Integration by parts,
\[
 \boldsymbol f*\dd\boldsymbol\Xi(t)
     =\boldsymbol f(0)\boldsymbol\Xi(t)
                       +\boldsymbol f'*\boldsymbol\Xi(t),
 \qquad
 \boldsymbol h*\dd\boldsymbol M(t)
     =\boldsymbol h(0)\boldsymbol M(t)
                       +\boldsymbol h'*\boldsymbol M(t),
\]
and its prelimit counterparts consequently give convergence of the
smooth-test transforms at these times.
The same integration-by-parts argument applies to smooth test functions of the
counts, because their difference from the compensators tends to zero
uniformly in probability.  These convergences hold jointly for any
finite collection of test functions and times.
If $\boldsymbol f(0)=\boldsymbol h(0)=\boldsymbol0$, the fixed-kernel
convolution consequence of $S$ convergence gives these transforms at
every time, without an evaluation of either path at that time.
The uniform second-moment bound also preserves the martingale property
\cite[Proposition~C.7]{Xu2024Scaling}.

Fix smooth test functions $\boldsymbol f,\boldsymbol h$ and an integer $L\ge1$.
The prelimit solutions exist and take values in $\mathbb C_-^d$ by
\cite[Lemma~4.3(1)]{Xu2024Scaling}, and depend on the kernel and test functions,
independently of the baseline.  Lemma~\ref{lem:uniform-Riccati-bound}
provides a constant $C_{L+1}$ such that
$\left|V_j^{(n)}(t)\right|\le C_{L+1}$ on $[0,L+1]$ for all large $n$ and all $j$.
Choose $\varepsilon>0$ with $\varepsilon C_{L+1}<\pi/4$.
For each $j=1,\ldots,d$, consider an auxiliary Hawkes array with the
same kernel and baseline
\begin{equation}\label{eq:auxiliary-small-immigration}
 \boldsymbol\mu_{n,j}^{\varepsilon}(s)
        :=\varepsilon b_n(\phi_{n,ij}(s))_{i=1}^d,
 \qquad
 \boldsymbol H_{j}^{(n),\varepsilon}(t)
        =\varepsilon\boldsymbol R^{(n)}_{\cdot j}(t).
\end{equation}
The second identity follows from the resolvent equation.
Its cumulative exogenous activity converges to
$\boldsymbol\Upsilon_j^{\varepsilon}(t)
 :=\varepsilon\boldsymbol\Pi_{\cdot j}([0,t])$
at continuity points, by Lemma~\ref{lem:positive-resolvent-limit}.
Applying the preceding compactness and time-average arguments to these
$d$ auxiliary arrays, and extracting a common subsequence, gives
limits $(\boldsymbol\Xi_j^{\varepsilon},\boldsymbol M_j^{\varepsilon})$.
The exact affine formula in \cite[Lemma~4.3(2)]{Xu2024Scaling} gives
\begin{equation}\label{eq:auxiliary-small-transform}
 \exp\left\{\varepsilon V_j^{(n)}(t)\right\}
 =\E\!\left[\exp\left\{
       \boldsymbol f*\dd\boldsymbol{\mathcal A}_{j}^{(n),\varepsilon}(t)
      +\boldsymbol h*\dd\boldsymbol{\mathcal M}_{j}^{(n),\varepsilon}(t)
                         \right\}\right]
 \longrightarrow F_j^{\varepsilon}(t)
\end{equation}
outside the countable discontinuity set of
$\boldsymbol\Upsilon_j^{\varepsilon}$, where
\[
 F_j^{\varepsilon}(t)
 :=\E\!\left[\exp\left\{
      \boldsymbol f*\dd\boldsymbol\Xi_j^{\varepsilon}(t)
      +\boldsymbol h*\dd\boldsymbol M_j^{\varepsilon}(t)\right\}\right].
\]
This function is c\`adl\`ag: the random exponent has c\`adl\`ag paths by
the integration-by-parts identities, and its exponential has modulus
at most one, so the dominated convergence theorem applies to both
one-sided limits.

On $[0,L+1]$, the prelimit values in
\eqref{eq:auxiliary-small-transform} lie in the closed sector
\[
 \left\{z\in\mathbb C:
       e^{-\varepsilon C_{L+1}}\le|z|\le1,
       \ |\arg z|\le\varepsilon C_{L+1}\right\}.
\]
Their limits lie in the same sector almost everywhere, and
right-continuity extends this property to $[0,L+1)$.
The principal logarithm is continuous on this sector and satisfies
\[
 V_j^{(n)}(t)
    =\varepsilon^{-1}\operatorname{Log}
                            e^{\varepsilon V_j^{(n)}(t)}
 \longrightarrow
 V_j(t):=\varepsilon^{-1}\operatorname{Log}F_j^{\varepsilon}(t)
 \quad\text{almost everywhere on }(0,L+1).
\]
The resulting $\boldsymbol V$ is c\`adl\`ag on $[0,L+1)$ and takes
values in $\mathbb C_-^d$.  Repeating the extraction for $L=1,2,\ldots$ gives a
common diagonal subsequence.  The local representatives agree almost
everywhere on overlapping intervals as limits of the same sequence,
and hence agree everywhere by right-continuity.
Proposition~\ref{prop:local-Riccati-repair} gives
$\boldsymbol V=\boldsymbol W*\boldsymbol\Pi$ almost everywhere.
Both sides are right-continuous, so the equation holds at every time.
The weighted uniqueness estimate above identifies the same global
solution for every extracted subsequence; the subsequence criterion
therefore gives full-sequence convergence in each finite local $L^p$ norm.

For an accumulation point $(\boldsymbol\Xi,\boldsymbol M)$ of the
original array, \eqref{eq:general-exogenous} gives
$\boldsymbol H^{(n)}(s)\dd s\to\dd\boldsymbol\Upsilon(s)$ vaguely.
Equation~\eqref{eq:local-FLF-L1} gives a further subsequence of exponents
converging almost everywhere to $\boldsymbol W*\dd\boldsymbol\Upsilon$.
Combining the exact affine formula with the smooth-test convergence
proved above establishes \eqref{eq:general-FLF} almost everywhere.
Both sides are right-continuous: the random exponential is bounded,
and the dominated convergence theorem applies to convolution of the
zero-extended c\`adl\`ag $\boldsymbol W$ against the locally finite
measure $\dd\boldsymbol\Upsilon$.  Thus \eqref{eq:general-FLF} holds
at every time.
Taking $\boldsymbol f(t)=\int_0^t g(s)\dd s$ and
$\boldsymbol h(t)=\int_0^t k(s)\dd s$ for continuous
$\mathbb C_-^d$- and $\mathrm i\R^d$-valued functions $g,k$ determines
the joint laws of integrals of the paths against continuous test functions.
Right-sided averages recover all path values, so these laws determine
the joint c\`adl\`ag law.  Uniqueness of $\boldsymbol V$ therefore
identifies all accumulation points and proves full-sequence $S$ convergence.

Finally, let $F_n(t):=\int_0^t\boldsymbol R^{(n)}(s)\dd s$ and
$F(t):=\boldsymbol\Pi([0,t])$.
Lemma~\ref{lem:positive-resolvent-limit}, positivity, and the dominated
convergence theorem give $F_n\to F$ in $L^1(0,L)$ for every $L$.
Integrating the prelimit martingale-resolvent identity
\cite[equations~(5.2) and~(5.13)]{Xu2024Scaling} yields
\begin{equation}\label{eq:integrated-general-Volterra}
 \int_0^t\boldsymbol{\mathcal A}^{(n)}(s)\dd s
       =\int_0^t J_n(s)\dd s
                      +F_n*\boldsymbol{\mathcal M}^{(n)}(t).
\end{equation}
Uniform $S$ tightness and the countable family of continuous functions
separating points give a further subsequence with an almost sure $S$
representation on each compact time interval
\cite[Theorem~1.1, Theorem~2.13(i), and Remark~2.16]{Jakubowski1997}.
The represented paths are uniformly bounded on that interval.
For each fixed $t$, the error bound
\[
 |F_n*\boldsymbol{\mathcal M}^{(n)}(t)-F*\boldsymbol M(t)|_1
 \le C_d\|F_n-F\|_{L^1(0,t)}
          \sup_{0\le s\le t}|\boldsymbol{\mathcal M}^{(n)}(s)|_1
      +|F*(\boldsymbol{\mathcal M}^{(n)}-\boldsymbol M)(t)|_1
 \longrightarrow0
\]
uses the convolution consequence of $S$ convergence
\cite[Proposition~C.1(2)]{Xu2024Scaling} for the fixed bounded kernel $F$.
Pass to the limit in \eqref{eq:integrated-general-Volterra}, first at
rational times and then at all times by continuity of the integrals.
Fubini's theorem gives
\[
 \int_0^t\boldsymbol\Xi(s)\dd s
      =\int_0^t\boldsymbol\Upsilon(s)\dd s+F*\boldsymbol M(t)
      =\int_0^t\left(\boldsymbol\Upsilon(s)
                                  +\boldsymbol\Pi*\boldsymbol M(s)\right)\dd s.
\]
Right-continuity of the integrands proves \eqref{eq:general-Volterra}.
Fatou's lemma applied to \eqref{eq:general-compactness-moments}, first
at positive continuity points of $\boldsymbol\Upsilon$ and then along
such points decreasing to any $t\ge0$, gives
\eqref{eq:general-limit-moments}.
Passing \eqref{eq:general-mean-increments} to the limit at continuity
points of $\boldsymbol\Upsilon$, and then approaching arbitrary
$s,t$ from the right, gives
\[
 \E[|\boldsymbol\Xi(t)-\boldsymbol\Xi(s)|_1]
       \le|\boldsymbol\Upsilon(t)-\boldsymbol\Upsilon(s)|_1,
 \qquad
 \E[|\boldsymbol M(t)-\boldsymbol M(s)|_1^2]
       \le d|\boldsymbol\Upsilon(t)-\boldsymbol\Upsilon(s)|_1.
\]
Thus continuity of $\boldsymbol\Upsilon$ implies stochastic continuity
of both processes.  If $\boldsymbol\Upsilon$ is continuous on $\R$,
its zero extension gives $\boldsymbol\Upsilon(0)=\boldsymbol0$, and
\eqref{eq:general-limit-moments} gives zero initial values.

It remains to verify the asserted continuity of $\boldsymbol V$.
If $\boldsymbol f(0)=\boldsymbol h(0)=\boldsymbol0$, the
integration-by-parts formulas make the auxiliary random exponents
continuous in time; dominated convergence makes
$F_j^\varepsilon$ continuous, and its fixed logarithm is continuous.
Alternatively, if $t\mapsto\boldsymbol\Pi([0,t])$ is continuous on
$\R_+$, the increment estimates just proved apply to the auxiliary
arrays and make their limits stochastically continuous at positive
times.  The same integration-by-parts formulas give convergence in
probability of their exponents as time varies.  Their bounded
exponentials therefore have continuous expectations there.
Right-continuity at zero, already established, proves continuity of
$\boldsymbol V$ on $\R_+$ in this case as well.
\end{proof}

A boundary layer explains why the integral norm is necessary, even with
smooth constant test functions and a diagonal potential atom.

\begin{remark} \label{rem:Riccati-boundary-layer}
The $L^\infty$ convergence asserted in
\cite[Lemma~4.5(2)]{Xu2024Scaling} can fail under its hypotheses.
In dimension one, for $n\ge2$, let
\[
 r_n=n^{-3/4},\quad \phi_n(s)=(1-r_n)e^{-s},\quad
 \theta_n=n^{1/2},\quad b_n=r_n^{-1},\quad
 \mu_n=(nr_n)^{-1},\qquad s>0.
\]
We set $\phi_n(0)=0$ by the kernel convention in
\eqref{eq:general-Hawkes}.
Then $\theta_n\to\infty$, and for every $\lambda\ge0$,
\[
 b_n\left[1-\mathcal L_{\phi_n}(\lambda/n)\right]
       =\frac{1+\lambda/(nr_n)}{1+\lambda/n}\to 1.
\]
Thus the limiting criticality function is $\Phi\equiv1$ and the
potential measure is $\Pi=\delta_0$.
The normalized resolvent is
$R^{(n)}(t)=nr_n(1-r_n)e^{-nr_nt}$, and
\[
 \int_0^tH^{(n)}(s)\dd s
   =t-\frac{1-r_n}{nr_n}(1-e^{-nr_nt})
   \to \Upsilon(t)=t.
\]
For $f\equiv-u$ with $u>0$ and $h\equiv0$,
\eqref{eq:local-prelimit-Riccati} becomes
\[
 V_n'(t)=nr_n F_n(V_n(t)),\quad V_n(0)=0,\qquad
 F_n(x)=(1-r_n)\left[-u+
          \frac{e^{r_nx}-1-r_nx}{r_n^2}\right]-x.
\]
On $(-\infty,0]$, $F_n'(x)\le-1$, while
$F_n(-u)>0>F_n(0)$, so $F_n$ has a unique root $v_n\in(-u,0)$.
Taylor's formula gives $v_n\to v:=1-\sqrt{1+2u}$, and the derivative
bound gives
\[
 0\le V_n(t)-v_n\le(-v_n)e^{-nr_nt}\le u e^{-nr_nt}.
\]
Consequently $V_n(t)\to v$ and $W_n(t)\to v$ for every $t>0$.
However, $W_n$ is continuous and $W_n(0)=-u$.  Every sufficiently
small interval to the right of zero therefore has positive measure
and values arbitrarily close to $-u$, giving
\[
 \|W_n-v\|_{L^\infty(0,L)}
       \ge|-u-v|=\frac{v^2}{2}>0,\qquad L>0.
\]
This example satisfies the almost-everywhere convergence premise and
the original high-intensity and diagonal-atom assumptions.
Proposition~\ref{prop:local-Riccati-repair} gives the finite-$p$
convergence and the convolution limits that remain valid.
\end{remark}

\subsection{A monotone-path \texorpdfstring{$M_1$}{M1} principle}

The next result is independent of the Hawkes structure.  Throughout, product $M_1$ means coordinatewise scalar $M_1$, on finite or infinite time horizons.  On $\R_+$, deterministic $M_1$ convergence is characterized by convergence of restrictions on $[0,L]$ at continuity points $L$ of the limit path \cite[Theorem~12.9.3]{Whitt2002}.  The corresponding weak-convergence criterion is joint convergence of the restricted vectors at deterministic $L$ where the limit is almost surely continuous; it follows by the continuous-mapping theorem and constant extension after $L$.

\begin{theorem} \label{thm:monotone-M1}
Let $\boldsymbol Z^n$ and $\boldsymbol Z$ be random elements of
$\cD(\R_+,\R^m)$ with nonnegative, coordinatewise nondecreasing paths.
Suppose there is a dense set $D\subset[0,\infty)$ containing $0$ such
that, for every $k\ge1$ and every $t_1,\ldots,t_k\in D$,
\[
 \left(\boldsymbol Z^n_{t_1},\ldots,\boldsymbol Z^n_{t_k}\right)
 \Longrightarrow
 (\boldsymbol Z_{t_1},\ldots,\boldsymbol Z_{t_k}).
\]
Then $\boldsymbol Z^n\Longrightarrow\boldsymbol Z$ in
$\cD(\R_+,\R^m)$ equipped with the product $M_1$ topology.
In particular, the restrictions converge in product $M_1$ on $[0,L]$
whenever $L>0$ satisfies
$\mathbb P\big(\Delta Z_L^j=0\text{ for all }j\big)=1$.
\end{theorem}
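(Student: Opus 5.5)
Since product $M_1$ is coordinatewise scalar $M_1$, and tightness in a product space follows from tightness of the marginals, the plan is to treat each coordinate as a scalar monotone process. Joint finite-dimensional convergence on $D$ will then identify the limit jointly. Two steps are needed: establish tightness in scalar $M_1$ for each coordinate, and show that any subsequential limit has the law of $\boldsymbol Z$. The route through finite horizons uses the characterization recalled before the statement. Restrictions to $[0,L]$ at continuity points $L$ of the limit determine $M_1$ convergence on $\R_+$ \cite[Theorem~12.9.3]{Whitt2002}. So it suffices to prove convergence of the restrictions at deterministic $L$ with $\mathbb P(\Delta Z^j_L=0\ \forall j)=1$. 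Such $L$ are co-countable, and the final sentence of the theorem is this step itself.

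For tightness on $[0,L]$, I would use the $M_1$ tightness criterion \cite[Theorem~12.12.3]{Whitt2002}. It requires three things: stochastic boundedness of $\sup_{t\le L}|Z^n_t|$, control of the oscillation modulus $w_s(x,\delta)$, and control of oscillations near the endpoints $0$ and $L$. For nondecreasing paths the $M_1$ oscillation $w_s$ vanishes identically, since every value $x(t_2)$ lies between $x(t_1)$ and $x(t_3)$. Monotonicity also makes the supremum equal $Z^n_L$. This is bounded in probability once I dominate it by $Z^n_{t'}$ for some $t'\in D$ with $t'\ge L$, whose law converges.

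The endpoint conditions are the main obstacle. At $0$, I need $\limsup_n\mathbb P(Z^n_\delta-Z^n_0>\varepsilon)\to0$ as $\delta\downarrow0$. I would take $\delta\in D$ and use convergence of $(Z^n_0,Z^n_\delta)$ together with the portmanteau theorem on the closed set $\{y-x\ge\varepsilon\}$. This bounds the limsup by $\mathbb P(Z_\delta-Z_0\ge\varepsilon)$, which tends to $0$ by right-continuity of $\boldsymbol Z$ at $0$. At $L$, I would choose $t_-<L<t_+$ in $D$ close to $L$. Monotonicity bounds the oscillation on $[L-\delta,L]$ by $Z^n_{t_+}-Z^n_{t_-}$. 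The same portmanteau argument bounds this by $\mathbb P(Z_{t_+}-Z_{t_-}\ge\varepsilon)$, and this tends to $\mathbb P(\Delta Z_L\ge\varepsilon)=0$ because $L$ is almost surely a continuity point. Choosing points of $D$ on both sides of $L$ is exactly where the density of $D$ and the almost-sure continuity at $L$ are used.

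For identification, take any subsequential limit $\tilde{\boldsymbol Z}$ on $[0,L]$. $M_1$ convergence implies convergence of the evaluation maps at continuity points of the limit, and these form a co-countable set containing $L$ (and $0$, by right-continuity). For a.e.\ choice of times, using the density of $D$, the finite-dimensional laws of $\tilde{\boldsymbol Z}$ agree with those of $\boldsymbol Z$ on a dense set containing $0$ and $L$. Right-continuity then gives equality in law in $\cD([0,L])$. Evaluations are taken jointly across coordinates, so the product limit is identified jointly. Finally, the horizon-restriction characterization lifts convergence on $[0,L]$ to all of $\R_+$, through the continuous-mapping and constant-extension argument mentioned before the theorem.
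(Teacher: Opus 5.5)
\textbf{Gap in the identification step.} Your tightness argument is essentially the paper's: zero interior $M_1$ oscillation for monotone paths, portmanteau bounds at both endpoints, and \cite[Theorem~12.12.3]{Whitt2002}. Straddling $L$ by points of $D$ is a harmless variant of the paper's choice $L-\delta_k\in\mathcal T$.

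The problem is the identification of subsequential limits. Along a subsequence with $\boldsymbol Z^n\Rightarrow\tilde{\boldsymbol Z}$ in product $M_1$ on $[0,L]$, the continuous-mapping theorem gives joint convergence of $(\boldsymbol Z^n_{t_1},\ldots,\boldsymbol Z^n_{t_k})$ to the corresponding values of $\tilde{\boldsymbol Z}$. This holds only for times in
\[
\tilde{\mathcal T}:=\{0,L\}\cup\{t:\mathbb P(\Delta\tilde{\boldsymbol Z}_t=\boldsymbol 0)=1\}.
\]
The hypothesis, however, gives convergence to the laws of $\boldsymbol Z$ only on $D$. Density of $D$ does not make $D\cap\tilde{\mathcal T}$ dense, because $D$ may be countable. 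This can fail even for the true limit. Take $\boldsymbol Z^n=\boldsymbol Z$ with fixed jumps at every positive point of a countable dense $D$; a sum $\sum_{q\in D}2^{-k(q)}\xi_q\1_{[q,\infty)}$ with independent Bernoulli $\xi_q$ does this. Then $D\cap\tilde{\mathcal T}\cap(0,L)=\emptyset$. So the sentence ``for a.e.\ choice of times, using the density of $D$, the finite-dimensional laws agree on a dense set'' is not justified as written.

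\textbf{The missing step.} The paper carries it out before the tightness argument: extend the finite-dimensional convergence from $D$ to the co-countable set $\mathcal T$ of fixed continuity times of $\boldsymbol Z$ (together with $0$). For $t\in\mathcal T\setminus\{0\}$, choose $s_k\uparrow t$ and $u_k\downarrow t$ in $D$. Monotonicity gives
\[
\boldsymbol 0\le\boldsymbol Z^n_t-\boldsymbol Z^n_{s_k}\le\boldsymbol Z^n_{u_k}-\boldsymbol Z^n_{s_k},
\]
and portmanteau then yields
\[
\limsup_n\mathbb P\bigl(\|\boldsymbol Z^n_t-\boldsymbol Z^n_{s_k}\|_\infty>\varepsilon\bigr)\le\mathbb P\bigl(\|\boldsymbol Z_{u_k}-\boldsymbol Z_{s_k}\|_\infty\ge\varepsilon\bigr)\to0.
\]
Doing this simultaneously at finitely many times gives joint convergence on $\mathcal T$. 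This is exactly the sandwich you already use at the endpoint $L$.

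With that in hand, $\mathcal T\cap\tilde{\mathcal T}\cap[0,L]$ is co-countable in $[0,L]$, hence dense, and contains $0$ and $L$. The finite-dimensional laws of $\tilde{\boldsymbol Z}$ and $\boldsymbol Z$ agree on this set, and right-continuity identifies the law on $\cD([0,L],\R^m)$.

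You also assert without proof that $\mathcal T$ is co-countable. This is standard; the paper derives it from the nondecreasing c\`adl\`ag functions $t\mapsto\E[Z^j_t/(1+Z^j_t)]$.
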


\begin{proof}
For each coordinate $j$, let
$Y_t^j:=Z_t^j/(1+Z_t^j)$ and $m_j(t):=\E[Y_t^j]$.
Bounded convergence theorem and the c\`adl\`ag property show that $m_j$ is
c\`adl\`ag; it is also nondecreasing and hence has at most countably
many discontinuities. At a time $t>0$ where $m_j$ has no jump,
\[
 0=m_j(t)-m_j(t-)
   =\E\!\left[Y_t^j-Y_{t-}^j\right].
\]
The difference inside the expectation is nonnegative, so it vanishes
almost surely. Since $z\mapsto z/(1+z)$ is strictly increasing,
$\Delta Z_t^j=0$ almost surely. Let
\[
 \mathcal T:=\{0\}\cup
 \left\{t>0:\mathbb P\left(\Delta Z_t^j=0\text{ for all }j\right)=1\right\}.
\]
The complement of $\mathcal T$ is countable, and $\boldsymbol Z$ is
stochastically continuous at every time in $\mathcal T$.

We extend the assumed finite-dimensional convergence from $D$ to
$\mathcal T$. Fix $t\in\mathcal T\setminus\{0\}$ and choose
$s_k,u_k\in D$ with $s_k\uparrow t$ and $u_k\downarrow t$, where
$s_k<t<u_k$. Coordinatewise monotonicity gives
\[
 \boldsymbol0\le\boldsymbol Z^n_t-\boldsymbol Z^n_{s_k}
       \le\boldsymbol Z^n_{u_k}-\boldsymbol Z^n_{s_k}.
\]
For each fixed $k$, joint convergence at $s_k,u_k$, the continuous-mapping
theorem, and Portmanteau theorem imply, for every $\varepsilon>0$,
\[
 \limsup_{n\to\infty}
 \mathbb P\!\left(\left\|\boldsymbol Z^n_t-\boldsymbol Z^n_{s_k}\right\|_\infty
                  >\varepsilon\right)
 \le
 \mathbb P\!\left(\left\|\boldsymbol Z_{u_k}-\boldsymbol Z_{s_k}\right\|_\infty
                  \ge\varepsilon\right).
\]
The right-hand side tends to zero as $k\to\infty$, because
$\boldsymbol Z$ is almost surely continuous at $t$; also
$\boldsymbol Z_{s_k}\to\boldsymbol Z_t$ in probability. Applying this
approximation simultaneously at finitely many times, and using $0\in D$
when a time is zero, proves that, for every $q\ge1$ and every
$t_1,\ldots,t_q\in\mathcal T$,
\begin{equation}\label{eq:monotone-fdd-continuity}
 \left(\boldsymbol Z^n_{t_1},\ldots,\boldsymbol Z^n_{t_q}\right)
 \Longrightarrow
 \left(\boldsymbol Z_{t_1},\ldots,\boldsymbol Z_{t_q}\right).
\end{equation}

Fix $L\in\mathcal T$ with $L>0$. For each coordinate $j$,
$Z_L^{n,j}\Longrightarrow Z_L^j$, so that
$\sup_{t\le L}Z_t^{n,j}=Z_L^{n,j}$ is tight. The interior scalar $M_1$
oscillation modulus is zero for every nondecreasing path. Choose
$\delta_k\downarrow0$ such that both $\delta_k$ and $L-\delta_k$
belong to $\mathcal T$. The finite-dimensional convergence
\eqref{eq:monotone-fdd-continuity}
and Portmanteau theorem give
\begin{align*}
 \limsup_{n\to\infty}
 \mathbb P\left(Z_{\delta_k}^{n,j}-Z_0^{n,j}>\varepsilon\right)
 &\le \mathbb P\left(Z_{\delta_k}^j-Z_0^j\ge\varepsilon\right),\\
 \limsup_{n\to\infty}
 \mathbb P\left(Z_L^{n,j}-Z_{L-\delta_k}^{n,j}>\varepsilon\right)
 &\le \mathbb P\left(Z_L^j-Z_{L-\delta_k}^j\ge\varepsilon\right).
\end{align*}
Both right-hand sides tend to zero as $k\to\infty$: the first by
right-continuity at $0$, and the second because $L\in\mathcal T$.
The endpoint oscillations are nondecreasing in $\delta$, so control along
$\delta_k\downarrow0$ gives the full endpoint conditions. The scalar
$M_1$ tightness criterion \cite[Theorem~12.12.3]{Whitt2002} now applies
coordinatewise, and finite $m$ gives tightness in product $M_1$.

Joint finite-dimensional convergence at the stochastic continuity times
in $\mathcal T\cap[0,L]$ identifies every subsequential limit as
$\boldsymbol Z|_{[0,L]}$. Thus the restrictions converge in product
$M_1$ on every such interval. Since $\mathcal T$ is unbounded and has
countable complement, these restriction limits give convergence in the
product $M_1$ topology on $\cD(\R_+,\R^m)$.
\end{proof}

We now apply Theorem~\ref{thm:monotone-M1} simultaneously to the rescaled compensator and count.  Continuity of the limiting cumulative exogenous activity provides the stochastic continuity needed for the upgrade and also permits a general $J_1$ obstruction.

\begin{corollary} \label{cor:general-M1}
Under the hypotheses of Theorem~\ref{thm:relaxed-Xu}, suppose $\boldsymbol\Upsilon$ is continuous on $\R$.  Then, on every compact interval, as $n\to\infty$,
\begin{equation}\label{eq:general-M1-limit}
    \left(\boldsymbol{\mathcal A}^{(n)},
          \boldsymbol{\mathcal N}^{(n)}\right)
    \Longrightarrow(\boldsymbol\Xi,\boldsymbol\Xi),
\end{equation}
in the product $M_1$ topology.  On any interval $[0,L]$ where a coordinate $\Xi_i$ has discontinuous paths with positive probability, convergence of either corresponding marginal in $J_1$ is impossible whenever $N_{n,i}$ is simple.
\end{corollary}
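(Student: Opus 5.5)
The plan is to apply Theorem~\ref{thm:monotone-M1} to the $2d$-dimensional process $\boldsymbol Z^n:=(\boldsymbol{\mathcal A}^{(n)},\boldsymbol{\mathcal N}^{(n)})$ with limit $(\boldsymbol\Xi,\boldsymbol\Xi)$. Both coordinates are nonnegative and coordinatewise nondecreasing: the counts by definition, and the compensators because the intensities are nonnegative. For the set $D$ I would take the positive continuity points of $\boldsymbol\Upsilon$ together with $0$. Since $\boldsymbol\Upsilon$ is continuous on $\R$, every positive time qualifies, so $D=[0,\infty)$ is dense.

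Theorem~\ref{thm:relaxed-Xu}(i) gives finite-dimensional convergence of the triple at every positive continuity point of $\boldsymbol\Upsilon$. Two things must be checked before this covers the pair.

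\begin{itemize}
\item \emph{Time $0$.} Theorem~\ref{thm:relaxed-Xu}(iv) gives $\boldsymbol\Xi(0)=\boldsymbol0$ when $\boldsymbol\Upsilon$ is continuous on $\R$. The prelimit value at $0$ is $\boldsymbol0$, because the process starts with empty history and $\boldsymbol C_n(0)=\boldsymbol0$. So joint convergence extends trivially to tuples containing $0$.
\item \emph{Joint limit of the pair.} Counts and compensators must converge jointly to the same $\boldsymbol\Xi$. This follows from the uniform $L^2$ estimate on $\boldsymbol{\mathcal N}^{(n)}-\boldsymbol{\mathcal A}^{(n)}$ in the proof of Theorem~\ref{thm:relaxed-Xu}, combined with Slutsky's lemma.
\end{itemize}

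Theorem~\ref{thm:monotone-M1} then yields product-$M_1$ convergence on $\cD(\R_+,\R^{2d})$. By Theorem~\ref{thm:relaxed-Xu}(iv), $\boldsymbol\Xi$ is stochastically continuous, so $\mathbb P(\Delta\Xi_L=0)=1$ for every $L$. The restriction statement of Theorem~\ref{thm:monotone-M1} therefore gives \eqref{eq:general-M1-limit} on every compact interval $[0,L]$.

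For the $J_1$ obstruction, I use two facts about the prelimit paths. First, when $N_{n,i}$ is simple, the jumps of $\mathcal N_i^{(n)}$ are at most $b_n^{-2}\to0$. Second, $\mathcal A_i^{(n)}$ is continuous, being an integral of a locally integrable intensity. Since $J_1$ convergence preserves maximal jump size (the map $x\mapsto\sup_{t\le L}|\Delta x(t)|$ is $J_1$-continuous on $[0,L]$), $J_1$ convergence of either marginal would force $\sup_{t\le L}|\Delta\Xi_i(t)|=0$ almost surely. This contradicts the assumption that $\Xi_i$ is discontinuous on $[0,L]$ with positive probability.

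The one subtlety concerns the endpoint of $[0,L]$, where the supremum-of-jumps functional fails to be continuous. Stochastic continuity of $\boldsymbol\Xi$ gives $\Delta\Xi_i(L)=0$ almost surely, so the functional is almost surely continuous at the limit path. I would either restrict it to $[0,L)$ or enlarge $L$ slightly, then apply the continuous-mapping theorem.

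The main potential obstacle is therefore not the $M_1$ upgrade itself. It is checking that Theorem~\ref{thm:relaxed-Xu} really delivers \emph{joint} finite-dimensional convergence of counts and compensators at all positive times, including the value $0$ and the identification of both limits with the same $\boldsymbol\Xi$.
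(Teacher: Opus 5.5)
Your proposal is correct and follows the paper's proof essentially step for step. You apply Theorem~\ref{thm:monotone-M1} to the $2d$-dimensional monotone pair, handle time zero via $\boldsymbol\Xi(0)=\boldsymbol0$, and use the stochastic continuity from Theorem~\ref{thm:relaxed-Xu}(iv) to make every $L$ an admissible restriction point; your continuous max-jump functional for the $J_1$ obstruction is just a repackaging of the paper's Skorokhod-representation and jump-matching argument. Two of your steps are unnecessary: the Slutsky step, since Theorem~\ref{thm:relaxed-Xu}(i) already gives joint finite-dimensional convergence of the whole triple, and the endpoint worry, since $J_1$ time changes on $\cD([0,L])$ fix $L$, so $x\mapsto\sup_{t\le L}|\Delta x(t)|$ is already $J_1$-continuous.
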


\begin{proof}
By Theorem~\ref{thm:relaxed-Xu}, the pair
$(\boldsymbol{\mathcal A}^{(n)},\boldsymbol{\mathcal N}^{(n)})$
has nonnegative, coordinatewise nondecreasing paths, and its
finite-dimensional distributions converge to those of
$(\boldsymbol\Xi,\boldsymbol\Xi)$ on a dense set
$D_0\subset(0,\infty)$ by the finite-dimensional assertion of
Theorem~\ref{thm:relaxed-Xu}. Both prelimit processes start at
$\boldsymbol0$. Moreover, continuity of $\boldsymbol\Upsilon$,
together with \eqref{eq:general-limit-moments}, gives
$\boldsymbol\Xi(0)=\boldsymbol0$ and stochastic continuity of
$\boldsymbol\Xi$. Hence the same joint finite-dimensional convergence
holds on $D:=D_0\cup\{0\}$. Applying Theorem~\ref{thm:monotone-M1}
to the $2d$-dimensional pair proves \eqref{eq:general-M1-limit}
on every $[0,L]$, since every $L>0$ is a stochastic continuity time
of the limit.

For the $J_1$ assertion, each $\mathcal A_i^{(n)}$ is continuous, whereas simplicity of $N_{n,i}$ gives
\[
    \sup_{0\le t\le L}
       \left|\Delta\mathcal N_i^{(n)}(t)\right|
       \le b_n^{-2}\longrightarrow0.
\]
If either marginal converged in $J_1$, the Skorokhod representation theorem would allow an almost surely $J_1$-convergent realization.  The jump-matching property of $J_1$ then implies that every jump of the limit must be approximated by jumps of the prelimit paths.  Continuous approximants, or approximants whose maximal jump tends to zero, can therefore have only a continuous $J_1$ limit.  This contradicts
\[
    \mathbb P\left(\Xi_i\text{ is discontinuous on }[0,L]\right)>0.
\]
The proof is complete.
\end{proof}

\subsection{Diagonal multitype IG}

For the array in Section~\ref{sec:model}, the intrinsic activity scale is
$b_T=r_T^{-1}\to\infty$, independently of the behavior of $Tr_T^2$.  The limiting criticality data are
\begin{equation}\label{eq:Xu-multitype-data}
    \boldsymbol K=\boldsymbol I,
    \qquad \boldsymbol\Phi=\boldsymbol B,
    \qquad \boldsymbol\Pi=\boldsymbol B^{-1}\delta_0,
    \qquad
    \boldsymbol\Upsilon(t)=t\boldsymbol B^{-1}\boldsymbol\eta.
\end{equation}
The atom $\boldsymbol\Pi(\{0\})=\boldsymbol B^{-1}$ satisfies the diagonal-atom assumption of \cite{Xu2024Scaling} precisely in the diagonal specialization considered below.

\begin{proposition} \label{prop:general-diagonal-IG}
Assume \eqref{eq:multivariate-scaling} and suppose
$\boldsymbol B=\operatorname{diag}(b_1,\ldots,b_d)$.  Then
Theorem~\ref{thm:relaxed-Xu} and Corollary~\ref{cor:general-M1} apply with
\begin{equation}\label{eq:Xu-diagonal-specialization}
    n=T,\qquad b_T=r_T^{-1},\qquad
    \theta_T=\frac1{Tr_T^2},
\end{equation}
and the data in \eqref{eq:Xu-multitype-data}.  They yield the diagonal case of Theorem~\ref{thm:main}, whose limit is the vector of independent IG subordinators in Corollary~\ref{cor:diagonal-IG}.
\end{proposition}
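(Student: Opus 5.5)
The plan is to verify, one at a time, the hypotheses of Theorem~\ref{thm:relaxed-Xu} for the array of Section~\ref{sec:model} with $n=T$ and $b_T=r_T^{-1}$. After that, Corollary~\ref{cor:general-M1} gives the product-$M_1$ statement, and solving the resulting Riccati equation identifies the limit.

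\textbf{Reparameterization.} By \eqref{eq:general-rescaling}, $b_T^2=r_T^{-2}$. Hence $\boldsymbol{\mathcal N}^{(T)}=\boldsymbol X^T$ and $\boldsymbol{\mathcal A}^{(T)}=\boldsymbol C^T$, and \eqref{eq:activity-reparameterization} gives $\theta_T=(Tr_T^2)^{-1}$. The only growth requirement is $b_T\to\infty$, which holds because $r_T\to0$.

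\textbf{Algebraic hypotheses.} Take $\boldsymbol K=\boldsymbol I$, $\boldsymbol Q=\boldsymbol U=\boldsymbol I$ and $\ell=d$, so that $\boldsymbol G=\boldsymbol\Phi=\boldsymbol B$. This $\boldsymbol G$ is invertible, and we may take $\lambda^+=0$. For \eqref{eq:general-mass}, write
\[
 r_T^{-1}\bigl[\boldsymbol I-\mathcal L_{\boldsymbol\phi_T}(\lambda/T)\bigr]
 =\boldsymbol B_T+r_T^{-1}\bigl[\boldsymbol A_T-\mathcal L_{\boldsymbol\phi_T}(\lambda/T)\bigr].
\]
Since $0\le 1-e^{-x}\le x$, the second term is entrywise bounded by $\lambda\boldsymbol D_T/(Tr_T)$. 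This tends to $0$ because $\sup_T\|\boldsymbol D_T\|<\infty$ and $Tr_T\to\infty$. Therefore $\boldsymbol\Phi\equiv\boldsymbol B$, and the potential is $\boldsymbol\Pi=\boldsymbol B^{-1}\delta_0$.

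\textbf{Tilted stability and the diagonal atom.} For \eqref{eq:general-tilted-stability}, take $\lambda_0=0$. Then $\rho(\boldsymbol A_T)<1$ is assumed directly. The atom is $\boldsymbol B^{-1}=\operatorname{diag}(b_i^{-1})$, which is diagonal, so \eqref{eq:general-diagonal-atom} holds.

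\textbf{Exogenous activity.} This is the step I expect to need the most care. By Lemma~\ref{lem:positive-resolvent-limit}, $\boldsymbol R^{(T)}(t)\dd t\to\boldsymbol B^{-1}\delta_0$ vaguely. The baseline satisfies $\boldsymbol\mu^{(T)}=Tr_T\boldsymbol\mu_T\to\boldsymbol\eta$, and $\boldsymbol\mu^{(T)}/b_T=r_T\boldsymbol\mu^{(T)}\to0$. Because the baseline is constant,
\[
 \mathcal I_{\boldsymbol H^{(T)}}(t)=\int_0^t\Bigl(r_T\boldsymbol I+\int_0^s\boldsymbol R^{(T)}\Bigr)\dd s\,\boldsymbol\mu^{(T)}.
\]
The inner integral converges to $\boldsymbol B^{-1}$ for every $s>0$. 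Its total mass is $r_T(\boldsymbol I-\boldsymbol A_T)^{-1}\boldsymbol A_T\to\boldsymbol B^{-1}$, so it is uniformly bounded. Dominated convergence then gives $\boldsymbol\Upsilon(t)=t\boldsymbol B^{-1}\boldsymbol\eta$. This limit is continuous on $\R$ after extension by zero, so Theorem~\ref{thm:relaxed-Xu} and Corollary~\ref{cor:general-M1} apply with the data \eqref{eq:Xu-multitype-data}.

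\textbf{Identification.} Take $\boldsymbol h=0$ and $\boldsymbol f\equiv-\boldsymbol u$. Since $\boldsymbol\Pi$ is an atom, \eqref{eq:general-Riccati} becomes the pointwise equation $\boldsymbol V=(-\boldsymbol u+\tfrac12\boldsymbol V^{2})\boldsymbol B^{-1}$. Its solution in $\mathbb C_-^d$ is the constant $V_i=-z_i(\boldsymbol u)$ with $z_i=\sqrt{b_i^2+2u_i}-b_i$.

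The uniqueness of the Riccati solution selects the correct root. Explicitly, the equation reduces to $b_iV_i=-u_i+V_i^2/2$, whose two roots are $V_i=b_i\pm\sqrt{b_i^2+2u_i}$, and $\mathbb C_-$ forces the minus sign.

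Then
\[
 \boldsymbol W\ast\dd\boldsymbol\Upsilon(t)=t\,\boldsymbol W\boldsymbol B^{-1}\boldsymbol\eta=t\,\boldsymbol V\boldsymbol\eta=-t\langle\boldsymbol\eta,\boldsymbol z(\boldsymbol u)\rangle,
\]
which matches \eqref{eq:multitype-IG-LT}, using \eqref{eq:general-FLF} with $\boldsymbol f(0)\ne0$ evaluated at $\boldsymbol\Xi(t)$.

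Applying the same identification to general $\boldsymbol f$, or more simply using the independent-increment structure, shows that $\boldsymbol\Xi$ has the law of the subordinator of Corollary~\ref{cor:diagonal-IG}. Concretely, piecewise-constant $\boldsymbol f$ gives multi-time Laplace transforms, and these factorize because $\boldsymbol\Pi$ is an atom.

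The $J_1$ obstruction follows from Corollary~\ref{cor:general-M1}. Its hypotheses hold because each $Z_i$ has infinite Lévy mass and is therefore discontinuous almost surely.
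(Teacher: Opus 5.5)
Your verification of the hypotheses follows the paper's proof step for step. This covers the $1-e^{-x}\le x$ bound for \eqref{eq:general-mass}, $\lambda_0=0$ for tilted stability, and $\ell=d$ with $\boldsymbol\Pi=\boldsymbol B^{-1}\delta_0$. Your dominated-convergence computation of $\mathcal I_{\boldsymbol H^{(T)}}$ spells out what the paper only asserts. The one-time identification with $\boldsymbol f\equiv-\boldsymbol u$ is also fine, since constants are admissible $C^1$ tests.

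The gap is in the passage to finite-dimensional laws. Theorem~\ref{thm:relaxed-Xu}(iii) gives \eqref{eq:general-FLF} only for $\boldsymbol f\in C^1(\R_+;\mathbb C_-^d)$. You therefore cannot plug in a piecewise-constant $\boldsymbol f$ directly. Your alternative of ``using the independent-increment structure'' is circular. Nothing in Theorem~\ref{thm:relaxed-Xu} says $\boldsymbol\Xi$ has independent increments; that is exactly what the factorized multi-time transform is meant to prove.

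The missing step, which the paper supplies, is an approximation argument:
\begin{itemize}
\item Take uniformly bounded $\R_-^d$-valued $C^1$ tests converging, off the breakpoints, to the step test equal to $-\boldsymbol u_k$ on $[t_m-t_k,t_m-t_{k-1})$.
\item Since $\boldsymbol\Upsilon$ is continuous, Theorem~\ref{thm:relaxed-Xu}(iv) makes $\boldsymbol\Xi$ stochastically continuous. Hence $\dd\boldsymbol\Xi$ almost surely charges none of the finitely many deterministic times $t_k$, and dominated convergence passes the bounded random exponentials to the limit.
\item On the deterministic side, $V_i=b_i-\sqrt{b_i^2-2f_i}$ depends pointwise continuously on $f_i$, and $\dd\boldsymbol\Upsilon=\boldsymbol B^{-1}\boldsymbol\eta\dd s$. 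So dominated convergence applies there as well.
\end{itemize}
With this inserted, your factorization identifies $\boldsymbol\Xi$ with the subordinator of Corollary~\ref{cor:diagonal-IG}. Alternatively, you could stay within $C^1$ tests and match the full functional against that of $\boldsymbol Z$, using the determining-class argument at the end of the proof of Theorem~\ref{thm:relaxed-Xu}. Either way, an argument beyond your one sentence is needed.

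Separately, the $J_1$ clause of Corollary~\ref{cor:general-M1} also requires each coordinate count to be simple. This holds because the immigration and offspring intensity measures are atomless.
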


\begin{proof}
First, $T\theta_T=r_T^{-2}\to\infty$.  For every $\lambda\ge0$, the inequality $1-e^{-x}\le x$ gives the entrywise bound
\[
 \boldsymbol0\le\boldsymbol A_T
       -\mathcal L_{\boldsymbol\phi_T}(\lambda/T)
 \le \frac{\lambda}{T}\boldsymbol D_T.
\]
Consequently,
\[
 b_T\left[\boldsymbol I
      -\mathcal L_{\boldsymbol\phi_T}(\lambda/T)\right]
 =\boldsymbol B_T+o(1)\longrightarrow\boldsymbol B,
\]
because $Tr_T\to\infty$.  Thus $\boldsymbol K=\boldsymbol I$ and
$\boldsymbol\Phi=\boldsymbol B$.  The common tilted-stability
condition holds with $\lambda_0=0$, since
$\rho(\mathcal L_{\boldsymbol\phi_T}(0))=\rho(\boldsymbol A_T)<1$.
In the notation of
\eqref{eq:general-potential}, $\ell=d$, so the potential measure is
$\boldsymbol\Pi=\boldsymbol B^{-1}\delta_0$, whose atom is diagonal under the present hypothesis.

Moreover,
\[
    \boldsymbol\mu^{(T)}(t)
      =Tr_T\boldsymbol\mu_T\longrightarrow\boldsymbol\eta.
\]
Resolvent convergence and \eqref{eq:general-rescaling} therefore give
$\mathcal I_{\boldsymbol H^{(T)}}(t)\to
t\boldsymbol B^{-1}\boldsymbol\eta$, proving condition~\eqref{eq:general-exogenous} on cumulative exogenous activity.

To identify the limit, take $\boldsymbol h=\boldsymbol0$.
For a real nonpositive smooth test $\boldsymbol f$, the atom
$\boldsymbol\Pi=\boldsymbol B^{-1}\delta_0$ reduces
\eqref{eq:general-Riccati} to
\[
 b_iV_i=f_i+\frac12V_i^2,
 \qquad
 V_i=b_i-\sqrt{b_i^2-2f_i},
 \qquad W_i=b_iV_i.
\]
To identify the increments, fix $0=t_0<t_1<\cdots<t_m$ and
$\boldsymbol u_1,\ldots,\boldsymbol u_m\in\R_+^d$.
Approximate the negative step test taking value $-\boldsymbol u_k$
on $[t_m-t_k,t_m-t_{k-1})$ by uniformly bounded
$\mathbb R_-^d$-valued $C^1$ test functions.
The stochastic continuity of $\boldsymbol\Xi$ implies that
$\dd\boldsymbol\Xi$ has no atom at any of the finitely many
deterministic endpoints, almost surely.
Dominated convergence therefore applies to the random Stieltjes
integrals and their bounded exponentials.
On the deterministic side the algebraic Riccati root is
$V_i=b_i-\sqrt{b_i^2-2f_i}$, and
$\dd\boldsymbol\Upsilon(s)=\boldsymbol B^{-1}\boldsymbol\eta\dd s$,
so dominated convergence applies there as well.  Consequently,
\[
 \E\left[\exp\left\{-\sum_{k=1}^m
    \langle\boldsymbol u_k,
           \boldsymbol\Xi(t_k)-\boldsymbol\Xi(t_{k-1})\rangle
                    \right\}\right]
 =\exp\left\{-\sum_{k=1}^m(t_k-t_{k-1})\sum_{i=1}^d
       \eta_i\left(\sqrt{b_i^2+2u_{k,i}}-b_i\right)\right\}.
\]
The factorization identifies stationary independent increments and
independence of the coordinates.  Corollary~\ref{cor:general-M1}
now gives the joint product-$M_1$ convergence.
\end{proof}

\begin{remark}\label{rem:non-diagonal-Xu}
If $\boldsymbol B$ is non-diagonal, the formal specialization still gives the atom $\boldsymbol B^{-1}\delta_0$, but
$\boldsymbol B^{-1}$ is non-diagonal.  It therefore violates the diagonal-atom condition \eqref{eq:general-diagonal-atom} \cite[Assumption~2.8]{Xu2024Scaling};
\cite[Remark~2.21]{Xu2024Scaling} retains relative compactness in this situation but not the uniqueness conclusion needed to identify the entire sequence.  Theorem~\ref{thm:main} does not invoke that extension: its cluster proof identifies the exponent directly, and the nonsingular $M$-matrix estimate in Lemma~\ref{lem:multitype-family} provides uniqueness.
\end{remark}

\begin{remark} \label{rem:oscillating}
The accumulated-activity formulation also covers arrays whose original microscopic intensity scale oscillates.  For example, along integers let $r_n=n^{-3/4}$ for even $n$ and $r_n=n^{-1/4}$ for odd $n$, and take $\mu_n=\vartheta/(nr_n)$.  Then $r_n\to0$, $nr_n\to\infty$, and $n\mu_nr_n=\vartheta$, but $nr_n^2$ alternates between a sequence tending to zero and one tending to infinity.  Accordingly, $\theta_n=(nr_n^2)^{-1}$ has no limit, whereas $n\theta_n=r_n^{-2}\to\infty$, so the scalar case of Proposition~\ref{prop:general-diagonal-IG} applies without passing to subsequences.
\end{remark}

\section{Multitype cluster proof}\label{sec:multiproof}

We prove Theorem~\ref{thm:main} directly from the multitype
immigration--branching representation.  A type-$j$ event produces, independently for $i=1,\ldots,d$, a Poisson process of type-$i$ children with intensity
$\phi_{T,ij}(s)\dd s$.  Thus its mean offspring vector is the $j$-th column of $\boldsymbol A_T$.  Since $\rho(\boldsymbol A_T)<1$, every immigrant family is finite almost surely.

For a family rooted at one type-$j$ immigrant, let
$\boldsymbol S^{T,j}\in\mathbb N^d$ be its vector of total progeny, including the immigrant, and define
\begin{equation}\label{eq:multitype-family-transform}
    q_{T,j}(\boldsymbol u)
      :=\E\left[e^{-r_T^2
         \left\langle\boldsymbol u,\boldsymbol S^{T,j}\right\rangle}\right],
    \qquad
    \boldsymbol z_T(\boldsymbol u)
      :=\frac{\boldsymbol1-\boldsymbol q_T(\boldsymbol u)}{r_T}.
\end{equation}

The first step is to identify the asymptotic family mass generated by an immigrant of each type.  The following lemma shows that the scaled defects of the joint family transforms converge to precisely the coupled quadratic root defining the multitype IG exponent.

\begin{lemma} \label{lem:multitype-family}
Assume $r_T\to0$, $\rho(\boldsymbol A_T)<1$, and
$\boldsymbol B_T=(\boldsymbol I-\boldsymbol A_T)/r_T\to\boldsymbol B$,
where $\boldsymbol B$ is a nonsingular $M$-matrix.
For every $\boldsymbol u\in\R_+^d$,
\begin{equation}\label{eq:multitype-family-limit}
    \boldsymbol z_T(\boldsymbol u)
      \to\boldsymbol z(\boldsymbol u),
\end{equation}
where $\boldsymbol z(\boldsymbol u)$ is the unique nonnegative solution of
\eqref{eq:multitype-IG-root}.  The convergence is locally uniform on
$\R_+^d$.
\end{lemma}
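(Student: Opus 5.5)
Condition on the offspring of the root to obtain a fixed-point system for the family transforms, expand it to second order, and use the $M$-matrix structure both for a priori bounds and for uniqueness of the limiting root. A type-$j$ individual has independent Poisson numbers of type-$i$ children with means $A_{T,ij}$, and each child roots an independent family. Hence
\[
 q_{T,j}(\boldsymbol u)=e^{-r_T^2u_j}\exp\Big\{-\sum_{i=1}^d A_{T,ij}\bigl(1-q_{T,i}(\boldsymbol u)\bigr)\Big\},\qquad j=1,\ldots,d.
\]
Taking logarithms and writing $1-q_{T,i}=r_Tz_{T,i}$ gives
\[
 -\log(1-r_Tz_{T,j})=r_T^2u_j+r_T(\boldsymbol A_T^{\top}\boldsymbol z_T)_j .
\]
Subtract $r_Tz_{T,j}$ from both sides, divide by $r_T^2$, and use $\boldsymbol I-\boldsymbol A_T=r_T\boldsymbol B_T$. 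The system becomes
\begin{equation*}
 \boldsymbol B_T^{\top}\boldsymbol z_T+\frac{-\log(\boldsymbol 1-r_T\boldsymbol z_T)-r_T\boldsymbol z_T}{r_T^2}=\boldsymbol u,
\end{equation*}
with the logarithm taken componentwise. The quadratic term is $\tfrac12\boldsymbol z_T^{\circ2}(1+O(r_Tz_T))$, and it is nonnegative and at least $\tfrac12 z_{T,j}^2$ in each coordinate.

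\textbf{Uniqueness of the limiting root.} Define $\boldsymbol\varphi(\boldsymbol z):=\boldsymbol B^{\top}\boldsymbol z+\tfrac12\boldsymbol z^{\circ2}$. Suppose $\boldsymbol z,\boldsymbol z'\in\R_+^d$ both satisfy $\boldsymbol\varphi=\boldsymbol u$. Their difference solves $(\boldsymbol B^{\top}+\tfrac12\operatorname{diag}(\boldsymbol z+\boldsymbol z'))(\boldsymbol z-\boldsymbol z')=0$. The matrix in brackets is $\boldsymbol B^{\top}$ plus a nonnegative diagonal matrix, so it is again a nonsingular $M$-matrix. Therefore $\boldsymbol z=\boldsymbol z'$.

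\textbf{Existence and a priori bounds.} Existence will follow from the limit argument below. As an alternative, the monotone iteration $\boldsymbol z\mapsto$ (solve $\boldsymbol B^{\top}\boldsymbol z=\boldsymbol u-\tfrac12\boldsymbol z^{\circ2}$) does not obviously stay positive, so I prefer the compactness route. For a priori bounds, $z_{T,j}\le 1/r_T$ trivially, but a uniform bound is needed. Pair the equation with $\boldsymbol p_T=\boldsymbol B_T^{-\top}\boldsymbol 1$ in the other direction. Because $\boldsymbol B_T\to\boldsymbol B$ and $\boldsymbol B$ has a positive vector $\boldsymbol w$ with $\boldsymbol B\boldsymbol w>\boldsymbol0$, for large $T$ we get $\boldsymbol B_T\boldsymbol w\ge c\boldsymbol w$. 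Taking the inner product of the equation with $\boldsymbol w$, and using that the quadratic remainder is nonnegative, gives $c\langle\boldsymbol w,\boldsymbol z_T\rangle\le\langle\boldsymbol w,\boldsymbol u\rangle$. This yields a uniform bound, locally uniformly in $\boldsymbol u$. With $\boldsymbol z_T$ bounded, $r_Tz_T\to0$, so the remainder converges to $\tfrac12\boldsymbol z^{\circ2}$ uniformly.

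\textbf{Passage to the limit and local uniformity.} Any subsequential limit of $\boldsymbol z_T(\boldsymbol u)$ is a nonnegative root of $\boldsymbol\varphi(\boldsymbol z)=\boldsymbol u$. This gives existence, and uniqueness gives convergence of the whole sequence. For local uniformity, note that each $\boldsymbol z_T$ is nondecreasing in $\boldsymbol u$ and, by the bound above, uniformly Lipschitz on compacts. The Lipschitz property comes from differentiating the equation: the Jacobian is $\boldsymbol B_T^{\top}$ plus a nonnegative diagonal matrix, whose inverse is nonnegative and dominated by $\boldsymbol B_T^{-\top}$. Arzelà--Ascoli then upgrades pointwise convergence to locally uniform convergence, and the limit $\boldsymbol z$ is continuous.

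The main obstacle is the uniform a priori bound for the coupled system, which is where the $M$-matrix positivity enters. The rest is routine.
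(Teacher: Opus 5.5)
Your proof is correct. Through the existence-and-convergence step it coincides with the paper's. You use the same Poisson fixed-point identity, rewritten as $\boldsymbol u=\boldsymbol B_T^{\top}\boldsymbol z_T+\boldsymbol g_T(\boldsymbol z_T)$. You obtain the same a priori bound, by pairing with a positive vector $\boldsymbol w$ with $\boldsymbol B\boldsymbol w>\boldsymbol 0$ and dropping the nonnegative logarithmic remainder. And you close the subsequence argument the same way, by invertibility of the nonsingular $M$-matrix $\boldsymbol B^{\top}+\tfrac12\operatorname{diag}(\boldsymbol z+\boldsymbol z')$.

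The genuine difference is the upgrade to local uniform convergence. The paper first proves continuity of $\boldsymbol z$ softly, from bounded root sequences plus uniqueness. It then uses only componentwise monotonicity of $\boldsymbol z_T$ and $\boldsymbol z$ in $\boldsymbol u$, sandwiching each point of a cube between the vertices of a finite grid cell. This P\'olya-type argument needs no derivative information.

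You instead derive an equi-Lipschitz bound: $\boldsymbol B_T^{\top}$ plus a nonnegative diagonal matrix has a nonnegative inverse dominated by $\boldsymbol B_T^{-\top}$. This makes monotonicity superfluous. It gives the explicit constant $\sup_{T\ge T_0}\|\boldsymbol B_T^{-1}\|$ on all of $\R_+^d$, and the a priori bound on $\boldsymbol z_T$ is not actually needed for it. It also shows the limit $\boldsymbol z$ is Lipschitz rather than merely continuous.

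The one thing you leave implicit is differentiability of $\boldsymbol z_T$, and you can avoid it altogether. Use $|e^{-a}-e^{-b}|\le|a-b|$ for $a,b\ge0$ together with $\E[\boldsymbol S^{T,j}]=(\boldsymbol I-\boldsymbol A_T)^{-1}\boldsymbol e_j$. These give directly $|z_{T,j}(\boldsymbol u)-z_{T,j}(\boldsymbol v)|\le\bigl(\boldsymbol B_T^{-\top}|\boldsymbol u-\boldsymbol v|\bigr)_j$, with the absolute value taken componentwise. This is exactly the bound your Jacobian computation produces.
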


\begin{proof}
The branching property and independence of the Poisson offspring counts give
\begin{equation}\label{eq:multitype-pgf}
    q_{T,j}(\boldsymbol u)
      =\exp\left\{-r_T^2u_j
          +\sum_{i=1}^d A_{T,ij}
             (q_{T,i}(\boldsymbol u)-1)\right\}.
\end{equation}
Taking logarithms, substituting
$\boldsymbol q_T=\boldsymbol1-r_T\boldsymbol z_T$, and dividing by
$r_T^2$ yields the exact vector identity
\begin{equation}\label{eq:multitype-exact-root}
 \boldsymbol u
  =\boldsymbol B_T^{\top}\boldsymbol z_T
    +\boldsymbol g_T(\boldsymbol z_T),
 \qquad
 g_{T,i}(z):=\frac{-\log(1-r_Tz)-r_Tz}{r_T^2}.
\end{equation}

Choose $\boldsymbol p>\boldsymbol0$ such that
$\boldsymbol B\boldsymbol p>\boldsymbol0$, which is possible for a nonsingular $M$-matrix.  For all sufficiently large $T$,
$\boldsymbol B_T\boldsymbol p\ge c\boldsymbol1$ for some $c>0$.
Taking the inner product of \eqref{eq:multitype-exact-root} with
$\boldsymbol p$ and using $g_{T,i}\ge0$ gives
\[
    \langle\boldsymbol p,\boldsymbol u\rangle
      \ge c\|\boldsymbol z_T(\boldsymbol u)\|_1.
\]
Thus $\boldsymbol z_T$ is locally bounded.  Along any convergent subsequence,
$r_T\boldsymbol z_T\to\boldsymbol0$ and
$g_{T,i}(z_{T,i})\to z_i^2/2$, so every subsequential limit solves
\eqref{eq:multitype-IG-root}.  If $\boldsymbol z$ and
$\widetilde{\boldsymbol z}$ are two nonnegative solutions, then
\[
 \left[\boldsymbol B^{\top}
   +\frac12\operatorname{diag}\left(\boldsymbol z+\widetilde{\boldsymbol z}\right)\right]
 \left(\boldsymbol z-\widetilde{\boldsymbol z}\right)=\boldsymbol0.
\]
The bracket is a nonsingular $M$-matrix, hence invertible.  Thus the
nonnegative solution is unique, and every convergent subsequence of
$\boldsymbol z_T(\boldsymbol u)$ has the same limit
$\boldsymbol z(\boldsymbol u)$.  Together with the local boundedness
proved above, this establishes the pointwise convergence
\eqref{eq:multitype-family-limit}.

We next verify continuity of $\boldsymbol z$.  The bound obtained by
taking the inner product with $\boldsymbol p$ also holds for the limiting
equation, so $\boldsymbol z(\boldsymbol u_k)$ is bounded whenever
$\boldsymbol u_k\to\boldsymbol u$.  Every convergent subsequence then
solves \eqref{eq:multitype-IG-root} at $\boldsymbol u$; uniqueness implies
$\boldsymbol z(\boldsymbol u_k)\to\boldsymbol z(\boldsymbol u)$.

To upgrade pointwise convergence to local uniform convergence, fix
$M>0$ and $\varepsilon>0$.  Uniform continuity of $\boldsymbol z$ on
$[0,M]^d$ permits a finite rectangular grid with vertex set $G$ such
that the lower and upper vertices $\boldsymbol v,\boldsymbol w$ of each
cell satisfy
$\|\boldsymbol z(\boldsymbol w)-\boldsymbol z(\boldsymbol v)\|_\infty
<\varepsilon$.  Since $G$ is finite, pointwise convergence gives
\[
 \max_{\boldsymbol a\in G}
 \|\boldsymbol z_T(\boldsymbol a)-\boldsymbol z(\boldsymbol a)\|_\infty
 <\varepsilon
\]
for all sufficiently large $T$.  Each component of $\boldsymbol z_T$ is
nondecreasing in every component of $\boldsymbol u$, as follows from
\eqref{eq:multitype-family-transform}; the same property holds for its
pointwise limit $\boldsymbol z$.  For a point $\boldsymbol u$ in a cell
with $\boldsymbol v\le\boldsymbol u\le\boldsymbol w$, we therefore have,
componentwise,
\[
 \boldsymbol z_T(\boldsymbol v)\le\boldsymbol z_T(\boldsymbol u)
       \le\boldsymbol z_T(\boldsymbol w),
 \qquad
 \boldsymbol z(\boldsymbol v)\le\boldsymbol z(\boldsymbol u)
       \le\boldsymbol z(\boldsymbol w).
\]
The grid-point error and the oscillation of $\boldsymbol z$ on each cell
consequently give
\[
 \sup_{\boldsymbol u\in[0,M]^d}
 \|\boldsymbol z_T(\boldsymbol u)-\boldsymbol z(\boldsymbol u)\|_\infty
 \le2\varepsilon
\]
for all sufficiently large $T$.  Every compact subset of $\R_+^d$ is
contained in such a cube, proving local uniform convergence.
\end{proof}

Let $(\tau_{j,k}^T)_{k\ge1}$ be the type-$j$ immigrant times; these are independent Poisson processes of rates $\mu_{T,j}$.  Attach independent family vectors
$\boldsymbol S_{j,k}^T$ with law $\boldsymbol S^{T,j}$ and collapse every family at its immigrant time:
\begin{equation}\label{eq:multitype-collapsed}
    \overline{\boldsymbol X}_t^T
      :=r_T^2\sum_{j=1}^d
        \sum_{k:\tau_{j,k}^T\le Tt}\boldsymbol S_{j,k}^T.
\end{equation}

The one-family asymptotics now determine the L\'evy exponent of the collapsed process.  Combined with the immigration balance, they yield convergence of the full compound-Poisson family process to the multitype IG subordinator in the following lemma.

\begin{lemma} \label{lem:multitype-collapsed}
Assume the hypotheses of Lemma~\ref{lem:multitype-family} and the
immigration balance $Tr_T\boldsymbol\mu_T\to\boldsymbol\eta$, where
$\boldsymbol\eta\in(0,\infty)^d$.
For every $L>0$,
\begin{equation}\label{eq:multitype-collapsed-limit}
    \overline{\boldsymbol X}^T\Longrightarrow\boldsymbol Z
\end{equation}
in $\cD([0,L],\R^d)$ with the $J_1$ topology, and hence also with product $M_1$-topology.
\end{lemma}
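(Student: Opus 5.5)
The plan is to exploit the fact that $\overline{\boldsymbol X}^T$ is itself a Lévy process. It is a compound-Poisson subordinator in $\R_+^d$, so convergence of its one-time Laplace exponent forces convergence of the whole process in $J_1$. The argument has three steps.

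\emph{Step 1: exact Laplace exponent.} By the independence of the $d$ immigrant Poisson processes and the i.i.d.\ family marks, the exponential formula for marked Poisson processes gives, for $\boldsymbol u\in\R_+^d$ and $t\ge0$,
\[
 -\log\E\left[e^{-\langle\boldsymbol u,\overline{\boldsymbol X}_t^T\rangle}\right]
 =Tt\sum_{j=1}^d\mu_{T,j}\bigl(1-q_{T,j}(\boldsymbol u)\bigr)
 =t\sum_{j=1}^d\left(Tr_T\mu_{T,j}\right)z_{T,j}(\boldsymbol u).
\]
This uses \eqref{eq:multitype-family-transform}.

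\emph{Step 2: convergence of the exponent.} Lemma~\ref{lem:multitype-family} gives $\boldsymbol z_T(\boldsymbol u)\to\boldsymbol z(\boldsymbol u)$, and the immigration balance gives $Tr_T\boldsymbol\mu_T\to\boldsymbol\eta$. Hence the exponent converges to $t\langle\boldsymbol\eta,\boldsymbol z(\boldsymbol u)\rangle$ for every $\boldsymbol u\in\R_+^d$. Since $\boldsymbol z$ is continuous with $\boldsymbol z(\boldsymbol0)=\boldsymbol0$ (shown in the proof of Lemma~\ref{lem:multitype-family}), the limit is a proper Laplace transform on $\R_+^d$. The multivariate continuity theorem for Laplace transforms of $\R_+^d$-valued variables then gives $\overline{\boldsymbol X}_t^T\Rightarrow\boldsymbol Z_t$. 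Because $\overline{\boldsymbol X}^T$ has stationary independent increments, the same factorization holds for increments over disjoint intervals. This yields convergence of the finite-dimensional distributions to those of the subordinator $\boldsymbol Z$ from \eqref{eq:multitype-IG-LT}.

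\emph{Step 3: functional convergence.} For processes with independent increments (Lévy processes in $\R^d$), convergence of the one-dimensional marginal at a single time $t>0$ already implies weak convergence in $\cD([0,L],\R^d)$ with $J_1$. This is the classical result for triangular arrays of Lévy processes, e.g.\ Jacod--Shiryaev, Corollary VII.3.6, or Kallenberg's theorem on convergence of Lévy processes. There the convergence of characteristic triplets is equivalent to convergence of the marginals, because the characteristic exponent at time $1$ determines the law. I would invoke that theorem directly, checking only that $\overline{\boldsymbol X}^T$ is a genuine Lévy process. Its Lévy measure is $T\sum_j\mu_{T,j}\,\mathbb P(r_T^2\boldsymbol S^{T,j}\in\cdot)$ and it has no drift, so this holds.

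If one prefers to avoid the Lévy-process citation, the fallback is to check Aldous's criterion. For bounded stopping times $\sigma\le L$ and $\delta\downarrow0$, the strong Markov property gives
\[
\E\Bigl[1-e^{-\langle\boldsymbol 1,\overline{\boldsymbol X}^T_{\sigma+\delta}-\overline{\boldsymbol X}^T_\sigma\rangle}\Bigr]
=1-e^{-\delta\langle Tr_T\boldsymbol\mu_T,\boldsymbol z_T(\boldsymbol1)\rangle}\le C\delta
\]
uniformly in $T$. Combined with tightness of $\overline{\boldsymbol X}^T_L$ from Step 2, this gives $J_1$ tightness, and the limit is identified by the finite-dimensional distributions.

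Product $M_1$ convergence then follows because $J_1$ convergence in $\cD([0,L],\R^d)$ implies convergence of each coordinate in $J_1$, hence in $M_1$, which is exactly the product $M_1$ topology. The only delicate point is the continuity-theorem step on the boundary of $\R_+^d$: Lemma~\ref{lem:multitype-family} is stated on all of $\R_+^d$ with local uniform convergence, so no extra boundary argument should be needed.
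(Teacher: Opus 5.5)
Your proposal is correct and follows the paper's proof essentially step for step: you compute the exact compound-Poisson exponent $\langle Tr_T\boldsymbol\mu_T,\boldsymbol z_T(\boldsymbol u)\rangle$, pass to the limit using Lemma~\ref{lem:multitype-family}, apply the extended continuity theorem for Laplace transforms, and conclude with the L\'evy-process convergence criterion of Jacod--Shiryaev (Corollary~VII.3.6). The paper's proof also adds a short check that the limiting subordinator has zero drift and is therefore pure jump; this check is not needed for the stated convergence, and your Aldous fallback is likewise unnecessary but sound.
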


\begin{proof}
The process in \eqref{eq:multitype-collapsed} is a multivariate compound-Poisson subordinator.  Its Laplace exponent per unit macroscopic time is
\begin{align*}
 \psi_T(\boldsymbol u)
 =T\sum_{j=1}^d\mu_{T,j}
       (1-q_{T,j}(\boldsymbol u))
       =\left\langle Tr_T\boldsymbol\mu_T,
                    \boldsymbol z_T(\boldsymbol u)\right\rangle
   \longrightarrow
   \langle\boldsymbol\eta,\boldsymbol z(\boldsymbol u)\rangle.
\end{align*}
The limiting exponent is continuous at the origin.  The extended continuity theorem for multivariate Laplace transforms \cite[Theorem~6.23]{Kallenberg2021}, followed by the L\'evy-process convergence criterion \cite[Corollary~VII.3.6]{JacodShiryaev2003}, therefore gives \eqref{eq:multitype-collapsed-limit}.  This argument also proves that
\eqref{eq:multitype-IG-LT} defines a multivariate subordinator.
For the vector $\boldsymbol p>\boldsymbol0$ used in
Lemma~\ref{lem:multitype-family}, equation
\eqref{eq:multitype-IG-root} gives, for $c\ge1$,
\[
 \frac12\sum_{i=1}^d p_i z_i(c\boldsymbol u)^2
 \le c\langle\boldsymbol p,\boldsymbol u\rangle.
\]
If $\boldsymbol\delta\ge\boldsymbol0$ denotes the drift vector in the
L\'evy--Khintchine formula, its nonnegative integral term gives
\[
 0\le\langle\boldsymbol\delta,\boldsymbol u\rangle
 \le c^{-1}\langle\boldsymbol\eta,
                       \boldsymbol z(c\boldsymbol u)\rangle
 \longrightarrow0.
\]
Taking $\boldsymbol u=\boldsymbol1$ proves
$\boldsymbol\delta=\boldsymbol0$, so the subordinator is pure jump.
\end{proof}

It remains to restore the calendar times of all descendants.  Let
\begin{equation}\label{eq:matrix-renewal-measure}
    \boldsymbol U_T(\dd s)
      :=\boldsymbol I\delta_0(\dd s)
        +\sum_{n=1}^\infty\boldsymbol\phi_T^{*n}(s)\dd s
\end{equation}
be the matrix expected-age measure of a family.  Matrix convolution and Fubini's theorem give
\begin{equation}\label{eq:matrix-age-moment}
 \int_0^\infty s\boldsymbol U_T(\dd s)
   =(\boldsymbol I-\boldsymbol A_T)^{-1}
      \boldsymbol D_T
     (\boldsymbol I-\boldsymbol A_T)^{-1}.
\end{equation}

The matrix age identity \eqref{eq:matrix-age-moment} quantifies the expected family mass that can cross a fixed calendar-time boundary.  It yields both the vanishing error needed to undo the artificial family collapse and the increment bound required for product-$M_1$ tightness in the following lemma.

\begin{lemma} \label{lem:multitype-collapse}
For every fixed $t\ge0$, componentwise,
\begin{equation}\label{eq:multitype-collapse-error}
 \boldsymbol0\le
 \E\left[\overline{\boldsymbol X}_t^T-\boldsymbol X_t^T\right]
 \le
 \boldsymbol B_T^{-1}\boldsymbol D_T
       \boldsymbol B_T^{-1}\boldsymbol\mu_T
 \longrightarrow\boldsymbol0.
\end{equation}
Moreover, for $0\le s\le t\le L$,
\begin{equation}\label{eq:multitype-increment-bound}
 \E\left[\boldsymbol X_t^T-\boldsymbol X_s^T\right]
 \le (t-s)Tr_T\boldsymbol B_T^{-1}\boldsymbol\mu_T.
\end{equation}
Consequently, $(\boldsymbol X^T)$ is asymptotically tight as
$T\to\infty$ in product $M_1$ on every compact interval.
\end{lemma}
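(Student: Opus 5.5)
We will couple $\boldsymbol X^T$ and $\overline{\boldsymbol X}^T$ on the same immigration--branching construction: every event of $\boldsymbol N^T$ belongs to exactly one immigrant family, and it is counted in $\boldsymbol X^T_t$ iff its calendar time is at most $Tt$. It is counted in $\overline{\boldsymbol X}^T_t$ iff its family's immigrant time is at most $Tt$. Since a descendant occurs no earlier than its immigrant, $\overline{\boldsymbol X}^T_t-\boldsymbol X^T_t\ge\boldsymbol0$ pathwise. This gives the lower bound in \eqref{eq:multitype-collapse-error}.

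\textbf{Collapse error.} The difference counts descendants whose immigrant arrives at a time $\tau\le Tt$ but who themselves are born after $Tt$. By the branching structure, the expected number of type-$i$ descendants at age in $\dd a$ of a type-$j$ immigrant is $U_{T,ij}(\dd a)$, with $\boldsymbol U_T$ from \eqref{eq:matrix-renewal-measure}. Campbell's formula for the Poisson immigrants then gives
\[
\E\big[\overline{\boldsymbol X}^T_t-\boldsymbol X^T_t\big]
=r_T^2\int_0^{Tt}\boldsymbol U_T\big((Tt-\tau,\infty)\big)\boldsymbol\mu_T\dd\tau .
\]
Substituting $v=Tt-\tau$ and extending the integral to $v\in(0,\infty)$ yields the bound
\[
r_T^2\int_0^\infty \boldsymbol U_T((v,\infty))\dd v\,\boldsymbol\mu_T
=r_T^2\int_0^\infty s\,\boldsymbol U_T(\dd s)\,\boldsymbol\mu_T .
\]
By \eqref{eq:matrix-age-moment} this equals $r_T^2(\boldsymbol I-\boldsymbol A_T)^{-1}\boldsymbol D_T(\boldsymbol I-\boldsymbol A_T)^{-1}\boldsymbol\mu_T$, which is $\boldsymbol B_T^{-1}\boldsymbol D_T\boldsymbol B_T^{-1}\boldsymbol\mu_T$. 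For the limit, $\boldsymbol B_T^{-1}\to\boldsymbol B^{-1}$ since $\boldsymbol B$ is nonsingular, $\boldsymbol D_T$ is bounded, and $\boldsymbol\mu_T=O((Tr_T)^{-1})\to\boldsymbol0$ because $Tr_T\to\infty$. The only point to verify is the identity \eqref{eq:matrix-age-moment} itself. It follows from $\int s\,\boldsymbol\phi_T^{*n}=\sum_{k=0}^{n-1}\boldsymbol A_T^k\boldsymbol D_T\boldsymbol A_T^{n-1-k}$, summed as a Neumann series, which converges since $\rho(\boldsymbol A_T)<1$.

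\textbf{Increment bound.} The intensity satisfies $\E\boldsymbol\lambda^T_s=\boldsymbol\mu_T+\int_0^s\boldsymbol\phi_T(s-v)\E\boldsymbol\lambda^T_v\dd v$, so $\E\boldsymbol\lambda^T_s=\boldsymbol U_T([0,s])\boldsymbol\mu_T\le(\boldsymbol I-\boldsymbol A_T)^{-1}\boldsymbol\mu_T$, using nonnegativity and monotonicity of the renewal measure. Integrating over $(Ts,Tt]$ and multiplying by $r_T^2$ gives
\[
(t-s)\,T r_T^2\,(r_T\boldsymbol B_T)^{-1}\boldsymbol\mu_T=(t-s)\,Tr_T\,\boldsymbol B_T^{-1}\boldsymbol\mu_T,
\]
which is \eqref{eq:multitype-increment-bound}.

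\textbf{Tightness.} The paths are nondecreasing, so the interior $M_1$ oscillation vanishes. It remains to check \cite[Theorem~12.12.3]{Whitt2002} coordinatewise:
\begin{itemize}[leftmargin=*]
\item Stochastic boundedness of $\boldsymbol X^T_L$ follows from Markov's inequality and the increment bound, since $Tr_T\boldsymbol B_T^{-1}\boldsymbol\mu_T\to\boldsymbol B^{-1}\boldsymbol\eta$.
\item Each endpoint oscillation, $\boldsymbol X^T_\delta-\boldsymbol X^T_0$ and $\boldsymbol X^T_L-\boldsymbol X^T_{L-\delta}$, has expectation at most $C\delta$ uniformly in large $T$, so Markov's inequality controls both.
\end{itemize}
Product $M_1$ tightness follows since $d$ is finite. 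I expect the main (mild) obstacle to be justifying the age-measure identity and Campbell formula rigorously in the multitype setting. The rest is direct.
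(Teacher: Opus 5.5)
Your proposal is correct and follows essentially the same route as the paper. The paper likewise uses Campbell's formula with the tail-integral identity and the matrix age moment \eqref{eq:matrix-age-moment} for the collapse error, the bound $\E[\boldsymbol\lambda^T_s]\le(\boldsymbol I-\boldsymbol A_T)^{-1}\boldsymbol\mu_T$ for the increment estimate, and the monotone-path $M_1$ criterion \cite[Theorem~12.12.3]{Whitt2002} with Markov's inequality for tightness. Your double Neumann sum for \eqref{eq:matrix-age-moment} and your renewal-equation derivation of the mean intensity just supply details the paper asserts without proof.
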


\begin{proof}
The difference in \eqref{eq:multitype-collapse-error} consists of descendants whose immigrant arrived before $Tt$ but whose own birth occurs after $Tt$.  For any nonnegative locally finite measure $\nu$ on $[0,\infty)$, Tonelli's theorem gives the tail integral identity
\begin{equation}\label{eq:tail-integral-identity}
 \int_0^\infty\nu((v,\infty))\dd v
 =\int_{[0,\infty)}
      \left(\int_0^\infty\1_{\{v<s\}}\dd v\right)\nu(\dd s)
 =\int_{[0,\infty)}s\nu(\dd s).
\end{equation}
Applying Campbell's formula to each type's immigrant Poisson process
\cite[Proposition~2.7]{LastPenrose2017}, and then applying
\eqref{eq:tail-integral-identity} entrywise to $\boldsymbol U_T$, gives
\begin{align*}
 \E\left[\overline{\boldsymbol X}_t^T-\boldsymbol X_t^T\right]
 &=r_T^2\int_0^{Tt}
       \boldsymbol U_T((Tt-s,\infty))\boldsymbol\mu_T\dd s\\
 &\le r_T^2\int_0^\infty
       \boldsymbol U_T((v,\infty))\boldsymbol\mu_T\dd v
       =r_T^2\left(\int_0^\infty s\boldsymbol U_T(\dd s)\right)
       \boldsymbol\mu_T.
\end{align*}
Using \eqref{eq:matrix-age-moment} and
$(\boldsymbol I-\boldsymbol A_T)^{-1}=r_T^{-1}\boldsymbol B_T^{-1}$
yields the upper bound in \eqref{eq:multitype-collapse-error}.  The matrices
$\boldsymbol B_T^{-1}$ are uniformly bounded, $\boldsymbol D_T$ is bounded by assumption, and
$\boldsymbol\mu_T=O((Tr_T)^{-1})$, so the bound vanishes.

The mean intensity of the process started from empty history is bounded componentwise by
$(\boldsymbol I-\boldsymbol A_T)^{-1}\boldsymbol\mu_T$.  Integrating over $(Ts,Tt]$ and multiplying by $r_T^2$ gives
\eqref{eq:multitype-increment-bound}.  Its right-hand side converges to
$(t-s)\boldsymbol B^{-1}\boldsymbol\eta$.  Each coordinate of
$\boldsymbol X^T$ is nonnegative and nondecreasing, so compact containment and the two endpoint controls follow from this bound and Markov's inequality.  The scalar monotone-path $M_1$ criterion applied coordinatewise proves product-$M_1$ tightness.
\end{proof}

\begin{proof}[Proof of Theorem~\ref{thm:main}]
Lemma~\ref{lem:multitype-collapsed} and the fixed-time estimate
\eqref{eq:multitype-collapse-error} imply convergence of all finite-dimensional distributions of $\boldsymbol X^T$ to those of
$\boldsymbol Z$.  Lemma~\ref{lem:multitype-collapse} gives product-$M_1$
tightness.  Since a L\'evy process is stochastically continuous, the finite-dimensional laws identify every subsequential limit, and hence
$\boldsymbol X^T\Rightarrow\boldsymbol Z$ in product $M_1$.

For each coordinate, Doob's inequality and
\eqref{eq:multitype-increment-bound} yield
\[
 \E\left[\sup_{0\le t\le L}
   \left|r_T^2M_{i,Tt}^T\right|^2\right]
 \le4r_T^4\E\left[N_{i,TL}^T\right]=O(r_T^2)\longrightarrow0.
\]
Thus $\boldsymbol X^T-\boldsymbol C^T\to\boldsymbol0$ uniformly in probability, and Slutsky's theorem gives \eqref{eq:main-convergence}.

Finally, the type-$j$ immigrant intensity measure
$\mu_{T,j}\dd s$ and every offspring intensity measure
$\phi_{T,ij}(s)\dd s$ are atomless: each assigns zero mass to every
singleton $\{s\}$.  The Poisson cluster construction therefore produces
no simultaneous births almost surely, so every coordinate count is
simple, meaning that each of its jumps has size one.  Its normalized jumps have size
$r_T^2\to0$, while every compensator path is continuous.  The $J_1$
jump-matching property therefore rules out convergence to any coordinate
$Z_i$ that is discontinuous with positive probability.
\end{proof}

\section{Scalar families and exact calendar collapse}\label{sec:collapsed}

The multitype cluster estimates specialize to explicit scalar family
laws.  We record these laws and use the expected-age distribution to
identify the exact calendar-collapse condition for a fixed delay density.

\subsection{Scalar family laws}

An immigrant family is a Poisson Galton--Watson tree with offspring
law $\Pois(a_T)$.  Let $S_T$ be its total progeny, including the
immigrant.  Since $a_T<1$, the family is finite almost surely and its
mean size is $\E[S_T]=\sum_{n=0}^\infty a_T^n=r_T^{-1}$.

If $\tau_i^T$ are the immigrant times and $S_{T,i}$ are independent copies of $S_T$, define the collapsed-cluster process
\begin{equation}\label{eq:collapsed}
    \overline X_t^T
    :=r_T^2\sum_{i:\,\tau_i^T\le Tt}S_{T,i}.
\end{equation}
Unlike the original Hawkes count, $\overline X^T$ places every descendant at the immigrant time.  It is a compound Poisson subordinator with macroscopic arrival rate $T\mu_T$ and jump law $r_T^2S_T$.

Let $\mathcal T_{T,j}$ denote the complete genealogical tree rooted at the $j$-th immigrant, so that $|\mathcal T_{T,j}|=S_{T,j}$.  The immigrant forest consists of the disjoint union of the complete trees whose roots arrive by time $Tt$:
\begin{equation}\label{eq:immigrant-forest}
    K_T(t):=\#\left\{j:\tau_j^T\le Tt\right\},
    \qquad
    \mathcal F_T(t):=\bigsqcup_{j=1}^{K_T(t)}\mathcal T_{T,j},
    \qquad
    |\mathcal F_T(t)|=\sum_{j=1}^{K_T(t)}S_{T,j}.
\end{equation}
Conditional on $K_T(t)$, these trees are independent Poisson Galton--Watson trees with offspring law $\Pois(a_T)$.  Each tree includes all descendants of its root, including those born after $Tt$; the forest is empty when $K_T(t)=0$.  Thus $\overline X_t^T=r_T^2|\mathcal F_T(t)|$, whereas $X_t^T$ counts only individuals already born by $Tt$.

Under $r_T\to0$ and
$\vartheta_T=T\mu_Tr_T\to\vartheta\in(0,\infty)$, the process in
\eqref{eq:collapsed} is the scalar specialization of
\eqref{eq:multitype-collapsed}, with $B=1$ and $\eta=\vartheta$.
Lemma~\ref{lem:multitype-collapsed} therefore gives, for every $L>0$,
\begin{equation}\label{eq:scalar-collapsed-limit}
    \overline X^T\Longrightarrow Y
    \quad\text{in }\cD([0,L],\R)\text{ with }J_1,
\end{equation}
where $Y$ is defined by \eqref{eq:IG-LT}.

\subsection{The exact calendar-collapse condition}

Lemma~\ref{lem:multitype-collapse} gives sufficiency of
$Tr_T\to\infty$.  For a fixed scalar delay density, the expected-age
distribution also proves necessity.

Let $F(\dd s):=\phi(s)\dd s$ and define the expected family age measure
\begin{equation}\label{eq:renewal-measure}
    U_T(\dd s):=\sum_{n=0}^\infty a_T^nF^{*n}(\dd s).
\end{equation}
The atom at zero is the immigrant, and the $n$-th term is the mean age distribution of generation $n$.  By \eqref{eq:kernel-assumption},
\begin{equation}\label{eq:age-moment}
    \int_0^\infty sU_T(\dd s)
    =m_\phi\sum_{n=1}^\infty na_T^n
    =\frac{m_\phi a_T}{(1-a_T)^2}.
\end{equation}

The scalar specialization of \eqref{eq:multitype-collapse-error},
with $B_T=1$ and $D_T=a_Tm_\phi$, gives, for every $t\ge0$,
\begin{equation}\label{eq:collapse-error}
 0\le\E[\overline X_t^T-X_t^T]\le\mu_Tm_\phi a_T.
\end{equation}
The bound uses only subcritical reproduction and its finite mean delay.

The following lemma characterizes calendar collapse under the
rare-family immigration balance.

\begin{lemma} 
\label{lem:collapse-necessity}
Let $\phi$ be a fixed probability density on $(0,\infty)$ with
$m_\phi\in(0,\infty)$.  Suppose $r_T\to0$ and
$\vartheta_T=T\mu_Tr_T\to\vartheta\in(0,\infty)$ as $T\to\infty$.
For every fixed $t>0$, the following three conditions are equivalent:
\[
 \overline X_t^T-X_t^T\xrightarrow{\mathbb P}0,
 \quad
 \E[\overline X_t^T-X_t^T]\to 0
 \quad \mbox{and}\quad 
 Tr_T\to \infty.
\]
\end{lemma}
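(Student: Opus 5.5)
Write $\Delta_t^T:=\overline X_t^T-X_t^T\ge0$. It counts, times $r_T^2$, the descendants of immigrants arrived by $Tt$ whose births occur after $Tt$. The plan is to prove the cycle $Tr_T\to\infty\Rightarrow\E[\Delta_t^T]\to0\Rightarrow\Delta_t^T\to0$ in probability, and then $\Delta_t^T\to0$ in probability $\Rightarrow Tr_T\to\infty$.

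\textbf{Easy implications.} The first implication is \eqref{eq:collapse-error}:
\[
\E[\Delta_t^T]\le\mu_Tm_\phi a_T=\frac{\vartheta_T m_\phi a_T}{Tr_T}\to0 .
\]
The second is Markov's inequality, since $\Delta_t^T\ge0$.

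\textbf{Exact mean formula.} By Campbell's formula, as in Lemma~\ref{lem:multitype-collapse},
\[
\E[\Delta_t^T]=r_T^2\mu_T\int_0^{Tt}U_T((v,\infty))\dd v .
\]
Since $\sum_n a_T^nF^{*n}((v,\infty))=\E\bigl[\sum_n a_T^n\1\{\sigma_n>v\}\bigr]$, with $\sigma_n$ the $n$-fold sum of i.i.d.\ delays with law $F$, we get $U_T((v,\infty))=\E[\#\{n\ge0:\sigma_n>v\}a_T^n]$. Hence
\[
\E[\Delta_t^T]=\vartheta_T\frac{r_T}{T}\,\E\Bigl[\sum_n a_T^n\min(\sigma_n,Tt)\Bigr].
\]
For the converse, suppose $Tr_T\not\to\infty$ and pass to a subsequence with $Tr_T\to\lambda\in[0,\infty)$. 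A geometric number $n$ of generations, weighted by $r_Ta_T^n$ (a probability over $n$), has $r_Tn\Rightarrow\mathrm{Exp}(1)$. By the law of large numbers, $\sigma_n\approx nm_\phi$, which is of order $Tr_T^{-1}/T\cdot T$. More precisely, $\sigma_n/T\approx m_\phi\,(r_Tn)/(Tr_T)$, which stays of order one or larger because $Tr_T\le C$. Therefore
\[
\E[\Delta_t^T]\to\vartheta\,\E\bigl[\min(m_\phi E/\lambda,t)\bigr]>0,
\]
interpreting this as $\vartheta t$ if $\lambda=0$. So the mean does not vanish. This uses only Fatou's lemma and a lower bound, so $m_\phi>0$ and the LLN suffice.

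\textbf{Main obstacle: converting the mean lower bound into a probability statement.} The mean is carried by rare large families, so $\Delta_t^T\ge0$ alone does not preclude $\Delta_t^T\to0$ in probability. I would instead show that $\Delta_t^T$ has a nondegenerate limit on the subsequence. The plan is to restrict to immigrants arriving in $(T(t-\delta),Tt]$. There are $\approx\vartheta\delta/r_T$ of them, independent of each other. A family reaches size $\ge\epsilon r_T^{-2}$ with probability $\asymp r_T$, by the Borel asymptotics or Lemma~\ref{lem:multitype-family}, so with probability bounded below (Poisson with mean $\asymp\vartheta\delta$) at least one such large family exists.

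For such a family, the genealogical depth is of order $r_T^{-1}$, and the depth-$k$ generations carry a positive fraction of the mass at depths $k\ge c\,r_T^{-1}$. Their birth times are at least about $c\,m_\phi r_T^{-1}\gtrsim cm_\phi T/\lambda'$, which exceeds $T\delta$ once $\delta$ is small, because $Tr_T$ is bounded. I would make this rigorous with a second-moment (Paley--Zygmund) bound for the late mass of a single family, conditioned on its size being of order $r_T^{-2}$. Equivalently, I would apply Paley--Zygmund directly to $\Delta_t^T$ using $\E[(\Delta_t^T)^2]\le C$, which follows from the finite-variance total-progeny moment $\E[S_T^2]\asymp r_T^{-3}$ together with the lower bound on the mean. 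Either way, $\liminf\mathbb P(\Delta_t^T>\epsilon)>0$, contradicting convergence in probability, and this closes the cycle.
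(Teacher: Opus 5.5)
Your proof is correct, and most of it matches the paper. The easy implications are \eqref{eq:collapse-error} plus Markov's inequality. Along a subsequence with $Tr_T\to\lambda\in[0,\infty)$, your mean computation is the paper's calculation in different form. Your expression $\vartheta_T\,\E[\min(D_T/T,t)]$, with $D_T$ the geometric sum of delays, tends to $\vartheta\int_0^t e^{-\lambda w/m_\phi}\dd w=\vartheta\bigl(t-\ell_\lambda(t)\bigr)$, which is exactly the paper's limit. The paper proves $r_TD_T\Rightarrow m_\phi E$ from the transform $r_T/(1-a_T\widehat\phi(qr_T))$; your route via $r_TG_T\Rightarrow E$ and the law of large numbers is equally valid.

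The genuine difference is the step from convergence in probability to convergence in mean.

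\emph{The paper's route.} It gets asymptotic uniform integrability of the dominating variable $\overline X_t^T$ from the already-established weak limit $\overline X_t^T\Rightarrow Y_t$ together with $\E[\overline X_t^T]=\vartheta_Tt\to\E[Y_t]$.

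\emph{Your Paley--Zygmund route.} You use an explicit second moment instead. Since $\overline X_t^T$ is compound Poisson with rate $T\mu_Tt$ and $\operatorname{Var}(S_T)=a_T/r_T^3$, one gets $\E[(\Delta_t^T)^2]\le\E[(\overline X_t^T)^2]=\vartheta_Tt+(\vartheta_Tt)^2$. So $\Delta_t^T$ is bounded in $L^2$, and Paley--Zygmund with $\liminf\E[\Delta_t^T]>0$ gives $\liminf\mathbb P(\Delta_t^T>\epsilon)>0$. This is the same uniform-integrability mechanism. It is self-contained, since it never invokes the IG limit of the collapsed process, at the cost of the total-progeny variance formula. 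You should state this explicit bound in the write-up.

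Your other option is to isolate a large family arriving in $(T(t-\delta),Tt]$ and control its deep-generation mass. It is not equivalent to the direct argument, is not needed, and would be much harder to make rigorous, because it requires the genealogy of families conditioned to be large. Drop it and keep the direct $L^2$ argument.
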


\begin{proof}
Let $G_T$ have the geometric law
$\mathbb P(G_T=k)=r_Ta_T^k$, $k\ge0$, and let
$(L_j)_{j\ge1}$ be independent delays with density $\phi$,
independent of $G_T$.
The age $D_T:=\sum_{j=1}^{G_T}L_j$ has probability law $r_TU_T$.
Finite mean gives
$\widehat\phi(qr_T)=1-qm_\phi r_T+o(r_T)$ for every $q\ge0$.
Hence
\[
 \E[e^{-qr_TD_T}]
   =\frac{r_T}{1-a_T\widehat\phi(qr_T)}
   \longrightarrow\frac1{1+qm_\phi}.
\]
The continuity theorem for Laplace transforms
\cite[Theorem~6.23]{Kallenberg2021} gives
$r_TD_T\Rightarrow m_\phi E$, where $E$ is exponential with mean one.
Campbell's formula now yields
\begin{equation}\label{eq:collapse-necessity-mean}
 \E[X_t^T]
 =\mu_Tr_T^2\int_0^{Tt}U_T([0,Tt-s])\dd s
 =\vartheta_T\int_0^t
       \mathbb P(r_TD_T\le Tr_Tw)\dd w.
\end{equation}
If $Tr_T\to c\in[0,\infty)$ along a subsequence, continuity of the
limiting exponential distribution and dominated convergence imply
\[
 \E[X_t^T]\longrightarrow\vartheta\ell_c(t),
 \qquad
 \ell_c(t):=\int_0^t(1-e^{-cw/m_\phi})\dd w<t.
\]
This also covers $c=0$, for which $\ell_0(t)=0$.
Since $\E[\overline X_t^T]=\vartheta_Tt$, the expected collapse
error tends to $\vartheta(t-\ell_c(t))>0$ on that subsequence.
Thus convergence of the error in mean forces $Tr_T\to\infty$.

Equation~\eqref{eq:scalar-collapsed-limit} gives
$\overline X_t^T\Rightarrow Y_t$ under the present assumptions.
Moreover, $\E[\overline X_t^T]\to\E[Y_t]=\vartheta t$.
For $K>0$, convergence of expectations of the bounded continuous
function $x\mapsto x\wedge K$ gives
\[
 \lim_{T\to\infty}\E[(\overline X_t^T-K)^+]
       =\E[(Y_t-K)^+]\longrightarrow0
       \quad\text{as }K\to\infty.
\]
Therefore the nonnegative variables $\overline X_t^T$ are
asymptotically uniformly integrable.
The bound $0\le\overline X_t^T-X_t^T\le\overline X_t^T$ shows that
convergence of the error in probability implies convergence in mean.
Finally, if $Tr_T\to\infty$, the estimate in
equation~\eqref{eq:collapse-error} gives
$\E[\overline X_t^T-X_t^T]\le m_\phi a_T\vartheta_T/(Tr_T)\to0$,
and Markov's inequality gives convergence in probability.
\end{proof}

\section{Branching structure and universality}\label{sec:branching}

This section complements the Volterra first-passage interpretation in \cite[Lemmas~2.3--2.4 and Theorem~3.1]{AbiJaberAttalRosenbaum2025} with a branching construction.  Proposition~\ref{prop:forest-passage} makes explicit, for the present near-critical immigrant forest, the classical random-walk-to-Brownian first-passage mechanism.  Proposition~\ref{prop:universality} then extends the conclusion beyond Poisson offspring and shows that the IG class persists for finite-variance offspring laws in age-dependent branching clusters, with the offspring variance changing the dispersion parameter but not the limiting mean.  In terms of financial applications, this identifies a robustness property of the limiting activity clock: its mean activity rate is fixed by the immigration balance, while microscopic offspring dispersion controls the variability and tail scale of macroscopic activity jumps.

\subsection{The Borel tail and the L\'evy measure}

The Dwass total-progeny identity \cite[formula~(1)]{Dwass1969} gives, in the Poisson case, the Borel formula
\begin{equation}\label{eq:Borel-pmf}
    \mathbb P(S_T=k)
    =e^{-a_Tk}\frac{(a_Tk)^{k-1}}{k!},
    \qquad k\ge1.
\end{equation}
Stirling's formula gives, uniformly when $r_T^2k$ remains in compact subsets of $(0,\infty)$,
\begin{equation}\label{eq:Borel-asymptotic}
    \mathbb P(S_T=k)
    \sim \frac{1}{\sqrt{2\pi}}k^{-3/2}
      \exp\left(-\frac{r_T^2k}{2}\right).
\end{equation}
Indeed, the exponential rate is $a_T-1-\log a_T\sim r_T^2/2$.  The L\'evy measure of the collapsed compound Poisson process is
\[
    \nu_T
    :=T\mu_T\sum_{k=1}^\infty
      \mathbb P(S_T=k)\,\delta_{r_T^2k}.
\]
For every $f\in C_c((0,\infty))$, the uniform asymptotic \eqref{eq:Borel-asymptotic} and a Riemann-sum argument give
\[
    \int_0^\infty f(x)\nu_T(\dd x)
    \longrightarrow
    \frac{\vartheta}{\sqrt{2\pi}}
      \int_0^\infty f(x)x^{-3/2}e^{-x/2}\dd x.
\]
Thus, $\nu_T$ converges vaguely on $(0,\infty)$ to precisely the measure in \eqref{eq:IG-Levy}.  The singularity $x^{-3/2}$ has infinite mass but finite first moment near zero.  The limiting clock consequently has infinitely many jumps on every interval but finite variation.

Asymptotic \eqref{eq:Borel-asymptotic} fixes the constants and
provides a concrete microstructure interpretation: a limiting jump of size $x$ is the mass of a Hawkes family with approximately $x/r_T^2$ events.  Infinite activity results from the triangular balance between an increasing number of immigrants and the critical $k^{-3/2}$ family-size mass-function asymptotic.

\subsection{Brownian exploration and first passage}

There is an important distinction between one family and the immigrant forest $\mathcal F_T(t)$ defined in \eqref{eq:immigrant-forest}.
Explore a Poisson Galton--Watson family one individual at a time.  If $\xi_{T,i}\sim\Pois(a_T)$ are the offspring counts, its queue-length walk has increments $\xi_{T,i}-1$, mean $-r_T$, and variance $a_T\to1$.  The total progeny is the first time the walk, started from one, hits zero.  A single initial individual disappears under the space scale $r_T$, and indeed
\[
    \mathbb P\left(r_T^2S_T>\varepsilon\right)
    \le \frac{r_T}{\varepsilon}
    \longrightarrow0,
    \qquad \varepsilon>0.
\]
Thus a single unconditioned family does not converge to a non-degenerate IG variable.  Its order-$r_T^{-2}$ tail instead contributes the limiting excursion, or L\'evy, measure described above.

The non-degenerate first-passage picture appears at the forest level.  For fixed $t>0$, the root count $K_T(t)$ in \eqref{eq:immigrant-forest} satisfies
\[
    K_T(t)\sim\Pois(T\mu_Tt).
\]
Conditional on $K_T(t)$, concatenate the explorations of these families.  The resulting queue-length walk starts from $K_T(t)$ and hits zero after
$\sum_{j=1}^{K_T(t)}S_{T,j}$
steps.  The following proposition makes the exploration limit rigorous.

\begin{proposition} \label{prop:forest-passage}
For every fixed $t\ge0$, we have as $T\to\infty$, 
\begin{equation}\label{eq:forest-passage}
    r_T^2\sum_{j=1}^{K_T(t)}S_{T,j}
    \Longrightarrow
    \tau_{\vartheta t}
    :=\inf\{s\ge0:\vartheta t+B_s-s<0\},
\end{equation}
where $B$ is standard Brownian motion.  Moreover,
\begin{equation}\label{eq:forest-passage-LT}
    \E\left[e^{-u\tau_{\vartheta t}}\right]
    =\exp\left\{-\vartheta t
      \left(\sqrt{1+2u}-1\right)\right\}.
\end{equation}
The strict-crossing convention makes $x\mapsto\tau_x$ the
right-continuous inverse of $s\mapsto\sup_{0\le v\le s}(v-B_v)$.
For each fixed $x\ge0$, it agrees almost surely with the first hitting
time of level $x$.  As the initial level varies,
$(\tau_x)_{x\ge0}$ is the Brownian first-passage subordinator; hence
$(\tau_{\vartheta t})_{t\ge0}$ has the same c\`adl\`ag law as $Y$.
\end{proposition}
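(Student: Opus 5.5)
The plan is to prove \eqref{eq:forest-passage-LT} first and derive \eqref{eq:forest-passage} from Laplace transforms. Only at the end will the exploration walk be used, to interpret the limit as a Brownian first passage. The left-hand side of \eqref{eq:forest-passage} is exactly $\overline X_t^T$ from \eqref{eq:collapsed}. By \eqref{eq:scalar-collapsed-limit}, the scalar case of Lemma~\ref{lem:multitype-collapsed}, it converges in law to $Y_t$, whose Laplace transform is \eqref{eq:IG-LT}. A direct route is also available. Conditioning on $K_T(t)\sim\Pois(T\mu_Tt)$ gives
\[
\E\left[e^{-ur_T^2|\mathcal F_T(t)|}\right]=\exp\{-T\mu_Tt(1-q_T(u))\}=\exp\{-t\vartheta_Tz_T(u)\},
\]
and the scalar case of Lemma~\ref{lem:multitype-family} gives $z_T(u)\to\sqrt{1+2u}-1$. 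So it remains to show that $\tau_{\vartheta t}$ has Laplace transform \eqref{eq:forest-passage-LT}.

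For the Brownian side, set $x=\vartheta t$ and $X_s:=s-B_s$, a Brownian motion with drift $+1$. The strict-crossing time $\tau_x=\inf\{s:X_s>x\}$ is the right-continuous inverse of the running supremum $\overline X_s:=\sup_{v\le s}X_v$. For fixed $x$, the event that $\overline X$ sits flat at level $x$ on a nondegenerate interval has probability zero, because by the strong Markov property $X$ immediately exceeds its level after hitting it. Hence $\tau_x$ equals the hitting time of $x$ almost surely. The transform comes from optional stopping applied to the exponential martingale $\exp\{\beta X_s-(\beta+\beta^2/2)s\}$ with $\beta=\sqrt{1+2u}-1\ge0$, which satisfies $\beta+\beta^2/2=u$. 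The martingale is bounded up to $\tau_x$, and $\tau_x<\infty$ almost surely because the drift is positive. This yields $\E[e^{-u\tau_x}]=e^{-x\beta}$, which is \eqref{eq:forest-passage-LT}.

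For the process statement, the strong Markov property at the hitting times $\tau_x$ gives $(\tau_x)_{x\ge0}$ stationary independent increments. Right-continuity of the inverse gives c\`adl\`ag paths, so it is a subordinator with exponent $x(\sqrt{1+2u}-1)$. Because $Y$ has the same one-dimensional Laplace exponent, and a L\'evy process's law is fixed by its exponent, $(\tau_{\vartheta t})_t\overset{d}{=}Y$. If the paper wants the exploration mechanism to be rigorous rather than a heuristic, add one more step. Condition on $K_T(t)=k\sim\vartheta t/r_T$ (this count concentrates). Rescale the concatenated queue walk as $r_T\big(k+\sum_{i\le s/r_T^2}(\xi_{T,i}-1)\big)$; by Donsker's theorem for triangular arrays this converges to $\vartheta t+B_s-s$. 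Then pass the hitting time through the continuous mapping theorem. The main obstacle is that hitting time is only continuous at paths that cross immediately after touching the level. That property holds almost surely for Brownian motion with drift, so the map is continuous at almost every limit path, and the Laplace argument above independently confirms the identification.
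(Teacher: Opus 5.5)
Your argument is correct, but it runs in the opposite direction from the paper's.

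The paper makes the exploration walk the engine of the proof. It shows $r_TK_T(t)\to\vartheta t$ in probability and applies the martingale functional CLT to the centered Poisson partial sums. Slutsky then gives $r_TQ^T_{\lfloor s/r_T^2\rfloor}\Rightarrow\vartheta t+B_s-s$ in $J_1$. It checks, via the strong Markov property and reflection, that the limit path dips strictly below zero immediately after its first hit. Continuity of the first-passage functional and the downward skip-free property of the queue walk then give \eqref{eq:forest-passage}. The paper never computes the Brownian transform: \eqref{eq:forest-passage-LT} is deduced by comparing with \eqref{eq:scalar-collapsed-limit} and using uniqueness of weak limits.

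You instead compute $\E[e^{-u\tau_x}]=e^{-x(\sqrt{1+2u}-1)}$ directly, by optional stopping of the exponential martingale with $\beta+\beta^2/2=u$. You then get \eqref{eq:forest-passage} essentially for free: the left side is $\overline X_t^T$, and its limit law has the same Laplace transform, whether obtained from \eqref{eq:scalar-collapsed-limit} or from your Poisson-conditioning identity $\exp\{-t\vartheta_Tz_T(u)\}$.

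Your route is shorter and more elementary. It avoids the functional CLT and the first-passage continuity theorem altogether. It also gives an independent Brownian-side verification of the IG transform, rather than inheriting it from the cluster computation.

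What it gives up is the content the proposition is meant to supply. In your main line the Brownian motion enters only through a coincidence of Laplace transforms, not as the scaling limit of the forest's exploration walk. So the identity ``forest mass $=$ first passage'' is distributional rather than mechanistic. Your optional final step is essentially the paper's proof and would restore that interpretation.
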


\begin{proof}
Let
\[
    Q_n^T:=K_T(t)+\sum_{i=1}^n(\xi_{T,i}-1).
\]
The total progeny of the forest is the first $n$ for which $Q_n^T=0$.  By the scaling assumptions of Corollary~\ref{cor:scalar-IG},
\[
    r_TK_T(t)\longrightarrow\vartheta t
    \quad\hbox{in probability},
    \qquad
    \operatorname{Var}\left(r_TK_T(t)\right)
    =\vartheta_Tr_Tt\longrightarrow0.
\]
Define the centered partial-sum process
\[
    B_T(s):=r_T\sum_{i=1}^{\lfloor s/r_T^2\rfloor}
       (\xi_{T,i}-a_T),
    \qquad s\ge0.
\]
Its predictable variance satisfies
\[
    \lfloor s/r_T^2\rfloor r_T^2a_T\longrightarrow s.
\]
Moreover, the fourth centered moments of $\Pois(a_T)$ are uniformly bounded because $0<a_T<1$.  Hence, for every $S,\varepsilon>0$,
\begin{align*}
 \sum_{i=1}^{\lfloor S/r_T^2\rfloor}
   \E\left[
      r_T^2(\xi_{T,i}-a_T)^2
      \1_{\{r_T|\xi_{T,i}-a_T|>\varepsilon\}}
   \right]\le
   \frac{r_T^4}{\varepsilon^2}
   \lfloor S/r_T^2\rfloor
   \E\left[(\xi_{T,1}-a_T)^4\right]
   \longrightarrow0.
\end{align*}
The martingale functional central limit theorem \cite[Theorem~VIII.3.11]{JacodShiryaev2003} therefore gives $B_T\Longrightarrow B$ locally in $J_1$.  Since
\[
    r_TQ^T_{\lfloor s/r_T^2\rfloor}
    =r_TK_T(t)+B_T(s)
      +r_T\lfloor s/r_T^2\rfloor(a_T-1),
\]
and
\[
    r_T\lfloor s/r_T^2\rfloor(a_T-1)\longrightarrow-s,
\]
locally uniformly in $s$, Slutsky's theorem yields
\[
    r_TQ^T_{\lfloor s/r_T^2\rfloor}
    \Longrightarrow \vartheta t+B_s-s,
\]
locally in $J_1$.

By \cite[Theorem~13.6.4]{Whitt2002}, the first-passage functional is continuous at continuous paths that cross strictly below the boundary arbitrarily soon after their finite first hit.  To verify this property, fix $x>0$ and write $\widehat\tau_x:=\inf\{s\ge0:x+B_s-s=0\}$, which is finite almost surely.  By the strong Markov property, $\widehat B_s:=B_{\widehat\tau_x+s}-B_{\widehat\tau_x}$ is standard Brownian motion.  The reflection principle then gives
$\inf_{0<s<\varepsilon}\widehat B_s<0$
for every $\varepsilon>0$, almost surely;
see \cite[Sections~2.6.A and~2.6.C]{KaratzasShreve1991}.  Consequently,
$x+B_{\widehat\tau_x+s}-(\widehat\tau_x+s)=\widehat B_s-s$ takes strictly negative values arbitrarily soon after $\widehat\tau_x$.
Thus $\tau_x=\widehat\tau_x$ almost surely for this fixed level.
The queue walk is downward skip-free, so its first nonpositive value
is zero.  The continuous-mapping theorem for this first-passage
functional therefore proves \eqref{eq:forest-passage} for $t>0$;
for $t=0$, Brownian regularity gives $\tau_0=0$, and the queue is empty.

Equation~\eqref{eq:scalar-collapsed-limit} also gives
$r_T^2\sum_{j=1}^{K_T(t)}S_{T,j}\Rightarrow Y_t$.
Uniqueness of the weak limit therefore identifies
$\tau_{\vartheta t}\overset{\mathrm d}=Y_t$, and
\eqref{eq:IG-LT} gives \eqref{eq:forest-passage-LT}.
The strong Markov property gives stationary independent increments
in the level $x$.  Together with the right-continuous inverse convention,
this identifies the c\`adl\`ag first-passage subordinator.
\end{proof}

Thus the two routes to IG are the same probabilistic mechanism viewed at different levels:
\[
  \text{near-critical forest}
  \longrightarrow
  \text{Brownian exploration with drift}
  \longrightarrow
  \text{first-passage time}
  \longrightarrow
  \text{IG mass}.
\]
Brownian first-passage limits for branching total mass and busy periods appear in \cite{PardouxWakolbinger2011,AbateWhitt1995}, while closely related inverse-Gaussian reproduction measures arise in rare-mutation limits of Galton--Watson trees \cite{Bertoin2009,Bertoin2010}.

\subsection{Finite-variance universality for age-dependent branching clusters}

Poisson offspring is built into the immigration--branching representation of a linear Hawkes process, but it is not essential for the limiting family-mass law.  The following formulation applies to age-dependent branching cluster processes, including marked or randomized reproduction mechanisms that need not possess a linear Hawkes intensity.

For every $T$, suppose the offspring counts of different individuals are independent and identically distributed with probability generating function $f_T$ and mean $a_T=f_T'(1)<1$, and set $r_T:=1-a_T$.  Child delays are independent with the fixed density $\phi$, independently of all offspring counts.  Immigrants form a Poisson process of rate $\mu_T$, independent of all reproduction mechanisms.  Let $\widetilde N^T$ denote the associated cluster count, started with empty history, and set $\widetilde X_t^T=r_T^2\widetilde N_{Tt}^T$.  Assume that $r_T\to 0$, and for some $\sigma^2\in(0,\infty)$, the uniform local expansion
\begin{equation}\label{eq:offspring-expansion}
 \log f_T(1-r_Tz)
 =-a_Tr_Tz
  +\frac{\sigma^2-1}{2}r_T^2z^2
  +o(r_T^2),
\end{equation}
locally uniformly for $z\ge0$.  More concretely, if $\xi_T$ has probability generating function $f_T$, $\operatorname{Var}(\xi_T)\to\sigma^2$, and $\sup_T\E[\xi_T^3]<\infty$, then Taylor expansion of $f_T(1-r_Tz)$ and of the logarithm gives \eqref{eq:offspring-expansion}, uniformly for $z$ in compact subsets of $[0,\infty)$.

The expansion \eqref{eq:offspring-expansion} replaces the unit quadratic coefficient of Poisson branching by the limiting offspring variance $\sigma^2$.  The next result identifies the resulting IG parameters and verifies calendar collapse through the same renewal first moment \eqref{eq:age-moment}.

\begin{proposition} \label{prop:universality}
For the age-dependent branching cluster process just defined, assume that conditions in Corollary~\ref{cor:scalar-IG} and   \eqref{eq:offspring-expansion} holds.  Then, for every $L>0$, as $T\to\infty$,
\[
    \widetilde X^T\Longrightarrow Y^{(\sigma)},
\]
in $\cD([0,L],\R)$ equipped with $M_1$-topology, where $Y^{(\sigma)}$ is the subordinator with Laplace exponent
\begin{equation}\label{eq:universal-exponent}
    \psi_\sigma(u)
    =\frac{\vartheta}{\sigma^2}
      \left(\sqrt{1+2\sigma^2u}-1\right),
\end{equation}
 and L\'evy density  
\begin{equation}\label{eq:universal-Levy}
    \nu_\sigma(\dd x)
    =\frac{\vartheta}{\sigma\sqrt{2\pi}}x^{-3/2}
       \exp\left(-\frac{x}{2\sigma^2}\right)
       \1_{(0,\infty)}(x)\dd x.
\end{equation}
In particular, the Poisson-Hawkes case corresponds to $\sigma^2=1$.  
As a consistency check,
\begin{equation}\label{eq:universal-mean-check}
    \psi_\sigma'(0)=\vartheta,
\end{equation}
independently of $\sigma^2$, in agreement with $\E[S_T]=r_T^{-1}$.
\end{proposition}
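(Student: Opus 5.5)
We follow the three-step architecture of the proof of Theorem~\ref{thm:main}: family transform, collapsed compound-Poisson limit, and calendar restoration plus monotone $M_1$ tightness.

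\textbf{Family transform.} Let $S_T$ be the total progeny of one immigrant's Galton--Watson tree with offspring generating function $f_T$, and set $q_T(u):=\E[e^{-ur_T^2S_T}]$. The total-progeny branching equation is $q_T(u)=e^{-ur_T^2}f_T(q_T(u))$. Write $1-q_T(u)=r_Tz_T(u)$, take logarithms, and insert \eqref{eq:offspring-expansion}. Since $\log(1-r_Tz)=-r_Tz-r_T^2z^2/2+O(r_T^3z^3)$, dividing by $r_T^2$ gives
\[
 u=\frac{a_T r_T z_T}{r_T^2}-\frac{r_Tz_T}{r_T^2}+\cdots,
\]
which, after collecting terms, becomes
\[
 u=z_T+\frac{\sigma^2}{2}z_T^2+o(1),
\]
locally uniformly on bounded $z$. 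The coefficient $\sigma^2/2$ arises because $-\tfrac{\sigma^2-1}{2}$ from $\log f_T$ combines with $+\tfrac12$ from $-\log(1-r_Tz)$. Local boundedness follows as in Lemma~\ref{lem:multitype-family}. The convexity of $\log f_T$ makes the remainder nonnegative up to $o(1)$; alternatively, $z_T\le u r_T^{-1}\cdot r_T\E[S_T]\cdot$ const, and $\E[S_T]=r_T^{-1}$ gives $z_T(u)\le u$ directly. The uniform expansion then applies on a fixed compact set. Every subsequential limit solves $z+\frac{\sigma^2}2z^2=u$, and the unique nonnegative root is $z=(\sqrt{1+2\sigma^2u}-1)/\sigma^2$. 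Hence $z_T\to z$ pointwise, and locally uniformly by monotonicity.

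\textbf{Collapsed process.} The collapsed process $r_T^2\sum_{\tau_i\le Tt}S_{T,i}$ is compound Poisson with exponent $T\mu_T(1-q_T)=\vartheta_Tz_T(u)\to\vartheta z(u)=\psi_\sigma(u)$. As in Lemma~\ref{lem:multitype-collapsed}, the Laplace continuity theorem and the Lévy-process convergence criterion give $J_1$ convergence to $Y^{(\sigma)}$. The Lévy density \eqref{eq:universal-Levy} follows by checking
\[
 \int_0^\infty(1-e^{-ux})\nu_\sigma(\dd x)=\psi_\sigma(u).
\]
To see this, substitute $x=\sigma^2y$ and use the IG integral of Corollary~\ref{cor:scalar-IG} with $u$ replaced by $\sigma^2u$; this also forces zero drift. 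Finally, $\psi_\sigma'(0)=\vartheta$ is immediate from the closed form.

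\textbf{Calendar restoration.} The expected age measure is unchanged: generation $n$ has mean size $a_T^n$, and its age has law $F^{*n}$, because delays are independent of offspring counts. So $U_T$ and \eqref{eq:age-moment} hold verbatim, and Campbell's formula gives
\[
 0\le \E[\overline X_t^T-\widetilde X_t^T]\le \mu_Tm_\phi a_T=\frac{\vartheta_Tm_\phi a_T}{Tr_T}\to0.
\]
The increment bound $\E[\widetilde X_t^T-\widetilde X_s^T]\le (t-s)\vartheta_T$ follows from the same mean computation. Fixed-time convergence extends to finite-dimensional distributions by Slutsky, and the limit is stochastically continuous. Theorem~\ref{thm:monotone-M1} (or the tightness argument of Lemma~\ref{lem:multitype-collapse}) then gives $M_1$ convergence on $[0,L]$.

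The main obstacle is the first step. Justifying the uniform use of \eqref{eq:offspring-expansion} requires an a priori bound on $z_T$ uniform in $T$, and the mean bound $z_T(u)\le u$ via $1-e^{-x}\le x$ settles this. All other steps transfer directly from the Poisson case.
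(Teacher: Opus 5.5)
Your proposal is correct and follows the paper's proof essentially step for step: the a priori bound $z_T(u)\le u$ from $1-e^{-x}\le x$ and $\E[S_T]=r_T^{-1}$, the limiting quadratic $u=z+\tfrac{\sigma^2}{2}z^2$, the compound-Poisson $J_1$ limit of the collapsed process, and calendar restoration through the unchanged expected-age measure with monotone $M_1$ tightness; your substitution $x=\sigma^2y$ for the L\'evy density is equivalent to the paper's direct integral with $\beta=(2\sigma^2)^{-1}$. Two cosmetic slips should be fixed: after dividing by $r_T^2$ the linear terms are $(r_Tz_T-a_Tr_Tz_T)/r_T^2=z_T$ and the $\log f_T$ contribution is $+\tfrac{\sigma^2-1}{2}z_T^2$, the opposite of the signs you wrote, and the convexity aside should be dropped, since $\log f_T$ need not be convex and the mean bound already suffices.
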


\begin{proof}
Let $S_T$ denote the total progeny of one family with offspring generating function $f_T$, and set
\[
    q_T(u):=\E\left[e^{-ur_T^2S_T}\right].
\]
By the branching property, $S_T\overset{\mathrm d}=1+\sum_{j=1}^{\xi_T}S_{T,j}$, where $\xi_T$ has generating function $f_T$ and the $S_{T,j}$ are independent copies of $S_T$, independent of $\xi_T$.  Conditioning on $\xi_T$ gives the total-progeny branching equation \cite[Section~2.1, p.~231]{Minami2005}:
\begin{equation}\label{eq:universal-branching-equation}
    q_T(u)=e^{-ur_T^2}f_T(q_T(u)).
\end{equation}
Write $q_T=1-r_Tz_T$.  The inequality $1-e^{-x}\le x$ and $\E[S_T]=r_T^{-1}$ give $z_T\le u$, so $(z_T)$ is bounded.  Taking logarithms and using \eqref{eq:offspring-expansion} together with
\[
    -\log(1-r_Tz)
    =r_Tz+\frac{r_T^2z^2}{2}+o(r_T^2),
\]
gives
\[
    u=z_T(u)+\frac{\sigma^2z_T(u)^2}{2}+o(1).
\]
The unique nonnegative root is
\[
    z=\frac{\sqrt{1+2\sigma^2u}-1}{\sigma^2}.
\]
Let $\overline{\widetilde X}^T$ denote the version of $\widetilde X^T$ in which every family is placed at its immigrant time.  Its Laplace exponent is
\[
    \widetilde\psi_T(u)
    =T\mu_T(1-q_T(u))
    =\vartheta_Tz_T(u)
    \longrightarrow\psi_\sigma(u).
\]
Pointwise convergence of these Laplace exponents, together with continuity of $\psi_\sigma$ at zero, gives $\overline{\widetilde X}^T\Longrightarrow Y^{(\sigma)}$ in $J_1$ by the L\'evy-process convergence criterion
\cite[Corollary~VII.3.6]{JacodShiryaev2003}.

The expected number of individuals in generation $n$ remains $a_T^n$.  Thus its expected-age measure is still
\eqref{eq:renewal-measure}.  Campbell's formula and the tail integral
calculation in the proof of Lemma~\ref{lem:multitype-collapse}
therefore give
\[
    0\le
    \E\left[\overline{\widetilde X}_t^T-\widetilde X_t^T\right]
    \le\mu_Tm_\phi a_T\to 0,
\]
and the mean event measure yields, for $0\le s\le t$,
\begin{equation}\label{eq:universal-increment-bound}
    \E\left[\widetilde X_t^T-\widetilde X_s^T\right]
    \le(t-s)\vartheta_T.
\end{equation}
Thus finite-dimensional convergence transfers from $\overline{\widetilde X}^T$ to $\widetilde X^T$.  Since $\widetilde X^T$ is nonnegative and nondecreasing, the increment bound \eqref{eq:universal-increment-bound} gives compact containment and both endpoint estimates by Markov's inequality, as in the proof of
Lemma~\ref{lem:multitype-collapse}.  The monotone-path tightness
criterion \cite[Theorem~12.12.3]{Whitt2002} therefore gives
$M_1$ tightness of $(\widetilde X^T)$.  This proves $\widetilde X^T\Longrightarrow Y^{(\sigma)}$ in $M_1$.

To verify the L\'evy density, use
\[
    \int_0^\infty(1-e^{-ux})x^{-3/2}e^{-\beta x}\dd x
    =2\sqrt{\pi}\left(\sqrt{\beta+u}-\sqrt{\beta}\right),
    \qquad \beta>0.
\]
Taking $\beta=(2\sigma^2)^{-1}$ gives
\[
    \int_0^\infty(1-e^{-ux})\nu_\sigma(\dd x)
    =\frac{\vartheta}{\sigma^2}
       \left(\sqrt{1+2\sigma^2u}-1\right)
    =\psi_\sigma(u),
\]
which proves \eqref{eq:universal-Levy}.  Differentiating \eqref{eq:universal-exponent} at zero gives \eqref{eq:universal-mean-check}.
\end{proof}

Proposition~\ref{prop:universality} identifies three features of the age-dependent branching limit: the $x^{-3/2}$ L\'evy singularity comes from finite-variance critical branching, the exponential tempering comes from the cutoff $r_T^{-2}$, and $M_1$ collapse comes from finite-mean calendar delays.  Heavy-tailed offspring or marks lead to different stable classes, as in \cite[Theorems~2.7 and~2.10]{Talarczyk2025} and \cite[Theorem~2.6]{Talarczyk2026}.

\section{Conclusion}\label{sec:conclusion}

We have shown that a nearly critical multivariate Hawkes system can converge directly to a multitype inverse-Gaussian clock.  The first-order matrix defect
$\boldsymbol B$ records cross-type interaction, and the coupled quadratic system
\eqref{eq:multitype-IG-root} determines the joint Laplace exponent.  The same process is the first-passage subordinator of a Brownian additive L\'evy field.  Thus the multivariate limit is not introduced by an arbitrary dependence construction: its dependence is inherited from microscopic cross-excitation.

The cluster proof isolates the mechanism.  Multitype finite-variance branching produces rare families with masses of order $r_T^{-2}$; the balance
$Tr_T\boldsymbol\mu_T\to\boldsymbol\eta$ keeps a non-degenerate collection of such families visible; and the matrix age estimate
\eqref{eq:matrix-age-moment}, together with $Tr_T\to\infty$, collapses their internal calendar time.  Product $M_1$ merges the resulting microscopic staircases into macroscopic jumps.  When $\boldsymbol B$ is diagonal, the coupled exponent separates into independent classical IG clocks; $d=1$ gives the scalar result, and the square-root normalization reproduces the hyper-rough-boundary law of \cite[Theorem~3.1]{AbiJaberAttalRosenbaum2025}.

Our comparison with \cite[equations~(2.8)--(2.9), Theorem~2.9, and Remark~2.21]{Xu2024Scaling} draws a sharp boundary.  Its scaling theory depends on accumulated activity $n\theta_n$, so the assumption
$\theta_n\to\infty$ can be weakened to $n\theta_n\to\infty$ under common tilted stability and the diagonal-atom condition.  The positive-resolvent and local Riccati arguments above establish this limit, and monotonicity upgrades the count and compensator to product $M_1$.  For our array the formal potential is $\boldsymbol B^{-1}\delta_0$.  The diagonal-atom condition \cite[Assumption~2.8 and Remark~2.21]{Xu2024Scaling} therefore proves the diagonal multitype and scalar cases, while the non-diagonal theorem requires the direct cluster identification and $M$-matrix uniqueness argument given here.

In terms of financial interpretation, a jump of $Z_i$ represents a rapidly completed endogenous avalanche of type-$i$ activity, while dependence among coordinates records how one event class propagates into others.  Corollary~\ref{cor:NIG} shows that a Brownian motion run on the scalar square-root clock is symmetric NIG, but a process-level return model would additionally need signs, imbalance, leverage, and a rule for composition at clock jumps \cite{ContTankov2004,AbiJaberDeCarvalho2024}.  Heavy-tailed delays may preserve family timing and produce rough memory, whereas infinite-variance offspring or marks should lead to stable rather than IG clocks.  Extending the genuinely coupled result beyond the identity critical eigenspace, and treating those tail regimes jointly with multitype interaction, remain natural directions.

\bibliographystyle{plain}
\bibliography{bibtex}

\end{document}